\newcommand{\Z}{\mathbf{Z}}
\newcommand{\Zhat}{\hat{\Z}}
\newcommand{\Q}{\mathbf{Q}}
\newcommand{\Qb}{\overline{\Q}}
\newcommand{\R}{\mathbf{R}}
\newcommand{\CC}{\mathbf{C}}
\newcommand{\A}{\mathbf{A}}
\newcommand{\TT}{\mathbf{T}}
\newcommand{\B}{\mathrm{B}}
\newcommand{\dR}{\mathrm{dR}}
\newcommand{\et}{\textrm{ét}}
\newcommand{\OK}{\mathcal{O}_K}
\DeclareSymbolFont{cyrillic}{T2A}{cmr}{m}{n}
\DeclareMathSymbol{\Sha}{\mathalpha}{cyrillic}{216}
\DeclareMathSymbol{\BB}{\mathalpha}{cyrillic}{193}
\DeclareMathOperator{\Aut}{Aut}
\DeclareMathOperator{\CH}{CH}
\DeclareMathOperator{\CHM}{CHM}
\DeclareMathOperator{\Cot}{Cot}
\DeclareMathOperator{\Fil}{Fil}
\DeclareMathOperator{\End}{End}
\DeclareMathOperator{\Gal}{Gal}
\DeclareMathOperator{\GL}{GL}
\DeclareMathOperator{\Hom}{Hom}
\DeclareMathOperator{\id}{id}
\DeclareMathOperator{\Ind}{Ind}
\DeclareMathOperator{\Lie}{Lie}
\DeclareMathOperator{\nr}{nr}
\DeclareMathOperator{\op}{op}
\DeclareMathOperator{\ord}{ord}
\DeclareMathOperator{\Res}{Res}
\DeclareMathOperator{\rr}{rr}
\DeclareMathOperator{\SL}{SL}
\DeclareMathOperator{\Spec}{Spec}
\newtheorem{thm}{Theorem}
\newtheorem*{thm*}{Theorem}
\newtheorem{lem}[thm]{Lemma}
\newtheorem{pro}[thm]{Proposition}
\newtheorem{cor}[thm]{Corollary}
\newtheorem*{cor*}{Corollary}
\newtheorem{conjecture}[thm]{Conjecture}
\theoremstyle{definition}
\newtheorem*{notations}{Notations}
\theoremstyle{remark}
\newtheorem{remark}{Remark}
\newtheorem*{remarks*}{Remarks}
\newtheorem{example}{Example}
\begin{document}

%\doublespacing

\title[Non-critical equivariant $L$-values]{Non-critical equivariant $L$-values\\ of modular abelian varieties}

\author[F. Brunault]{François Brunault}

\date{\today}

\address{ÉNS Lyon, Unité de mathématiques pures et appliquées, 46 allée d'Italie, 69007 Lyon, France}

\email{francois.brunault@ens-lyon.fr}
\urladdr{http://perso.ens-lyon.fr/francois.brunault}

\subjclass[2010]{Primary 11G40; Secondary 11G55, 19F27}
\keywords{Modular abelian varieties, $L$-functions, Beilinson conjectures, Zagier's conjecture, Deninger's conjecture}

\begin{abstract}
We prove an equivariant version of Beilinson's conjecture on non-critical $L$-values of strongly modular abelian varieties over number fields. The proof builds on Beilinson's theorem on modular curves as well as a modularity result for endomorphism algebras. As an application, we prove a weak version of Zagier's conjecture on $L(E,2)$ and Deninger's conjecture on $L(E,3)$ for non-CM strongly modular $\Q$-curves.
\end{abstract}

\maketitle

The purpose of this article is to use the full strength of Beilinson's theorem on modular curves to prove the following result.

\begin{thm}\label{main thm}
Let $A$ be an abelian variety defined over a number field $K$ such that the Hasse-Weil $L$-function $L(A/K,s)$ is a product of $L$-functions of newforms of weight $2$ without complex multiplication on congruence subgroups of $\SL_2(\Z)$. Then for every integer $n \geqslant 2$, the weak form of Beilinson's conjecture on $L(A/K,n)$ holds.
\end{thm}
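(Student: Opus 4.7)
The approach is to deduce the equivariant Beilinson conjecture for $A/K$ from the known Beilinson-type theorems for the individual weight-$2$ newforms appearing in the factorization $L(A/K, s) = \prod_i L(f_i, s)$. The first step is to promote the factorization of $L$-functions to a motivic statement. By Faltings' isogeny theorem, the hypothesis on $L(A/K,s)$ forces $A_{\Qb}$ to be isogenous to a product $\prod_i A_{f_i}^{n_i}$ of Eichler--Shimura modular abelian varieties. Since $K/\Q$ is Galois and $A$ is strongly modular, this isogeny can be made compatible with the $\Gal(K/\Q)$-action on both sides, yielding a $\Gal(K/\Q)$-equivariant decomposition of the motive $h^1(A/K)$ (with coefficients in a suitable number field) into pieces $M(f_i)$, possibly with multiplicities and twists by characters of $\Gal(K/\Q)$.

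Next, for each constituent newform $f_i$ I would invoke the Beilinson-type theorem that produces elements in motivic cohomology whose regulator realizes $L(f_i, n)$ up to a nonzero rational. For $n = 2$ this is Beilinson's original construction using Beilinson elements in $K_2$ of $Y_1(N_i)$; for $n \geqslant 3$ it is the analogous construction, due to Beilinson and extended by Deninger, Scholl and others, using Eisenstein symbols on self-products of the universal elliptic curve over $Y_1(N_i)$. The non-CM hypothesis on the $f_i$ ensures that the Hecke-isotypic projections of these classes are nonzero on the $f_i$-component, so that the regulator map is surjective after $\otimes \R$ on the relevant piece, i.e.\ the weak Beilinson conjecture holds for each motive $M(f_i)$.

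The final step is to transfer these elements through the modular parametrization and the isogeny of the first step to produce motivic cohomology classes on $A/K$ whose regulators equivariantly span the target Deligne cohomology. Because Beilinson's construction is functorial and Hecke-equivariant, and because the parametrization and isogeny are $\Gal(K/\Q)$-compatible, the resulting classes split along the decomposition $L(A/K, n) = \prod_i L(f_i, n)$, yielding the equivariant weak Beilinson statement.

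The principal difficulty lies in this last step: one must match the $\Gal(K/\Q)$-action on $A/K$ coming from its base-field structure with the natural Galois action on the set of newforms and their Hecke eigenvalue fields, so that the motivic classes assemble into one defined over $K$ with the correct equivariance. This requires careful descent of both the modular parametrizations and the Beilinson elements from $\Qb$ to $K$, and it is precisely the \emph{strongly modular} hypothesis --- rather than mere modularity over $\Qb$ --- that is designed to make such a descent available.
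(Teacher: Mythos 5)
The overall strategy you outline---factor $L(A/K,s)$ into newform $L$-functions, invoke Beilinson's theorem for weight-$2$ forms, and transport the resulting motivic classes to $A/K$---is indeed the paper's strategy in spirit, but the step you yourself flag as the ``principal difficulty'' is precisely the paper's main technical content, and your proposal does not supply it. Saying that the strongly modular hypothesis ``is designed to make such a descent available'' is not an argument: you never produce a mechanism that realizes $H^1(A/K)$, \emph{with} its Galois-equivariant structure, inside the motive of a modular curve in a way that is compatible with Beilinson's elements. The paper does this as follows: first it proves (via Guitart--Quer, after applying Faltings to $\Res_{K/\Q}A$) that $K/\Q$ is necessarily \emph{abelian} and that $A$ is $K$-isogenous to a product of varieties $B_{f_i,K}$ --- a fact your sketch never establishes, although it is essential, since the whole descent device rests on class field theory. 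It then avoids any ``descent of Beilinson elements from $\Qb$ to $K$'' altogether: it applies Beilinson's theorem directly to the curve $X_1(N)_F=\overline{M}_{K_1(N)_F}$, viewed adelically as a modular curve over $\Q$, whose Hecke algebra $\TT_{K_1(N)_F}$ is a twisted group ring $\TT'_{K_1(N)_F}\{\Gal(F/\Q)\}$ already containing the Galois action, and whose automorphic spectrum already contains all the twists $\pi(f^\sigma\otimes\chi)$. The key new ingredient is Theorem \ref{thm EndAf}: every endomorphism of $A_f$ defined over the abelian field $F$ comes from this Hecke algebra, proved by an automorphic multiplicity-one and dimension count (using the decomposition $\Omega_{f,F}\otimes\Qb=\bigoplus \omega_{f^\sigma}\cdot L_\chi$ with $\omega_{f^\sigma}\cdot L_\chi\subset\Omega(\pi(f^\sigma\otimes\chi))$). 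This yields a Hecke idempotent $e_f$ with $e_f\bigl(H^1(J_1(N)_F)\bigr)\cong H^1(A_f/F)$ in $\CHM_\Q(X\{G\})$, after which the functoriality propositions for the equivariant conjecture (idempotents, surjections, change of coefficients) finish the proof. Without an analogue of Theorem \ref{thm EndAf}, your transfer step has no content.

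Two further inaccuracies: (i) the non-CM hypothesis is not what guarantees nonvanishing of the Hecke-isotypic projections of Beilinson's elements---Beilinson's theorem gives $R(\pi)=L'(\pi,2-n)\cdot H(\pi)$ for every $\pi$; rather, non-CM is needed for the endomorphism-algebra analysis (uniqueness of the twisting character in Lemma \ref{emb_equiv_F} and the dimension count in Theorem \ref{thm EndAf}); (ii) asserting that the $\Qb$-isogeny from Faltings ``can be made compatible with the $\Gal(K/\Q)$-action'' to give an equivariant motivic decomposition of $h^1(A/K)$ is exactly the kind of unproved descent claim the paper replaces by the Hecke-theoretic construction, so it cannot be taken for granted.
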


We in fact prove a slightly stronger result, namely an equivariant version of Beilinson's conjecture for the Chow motive $H^1(A/K)$ with coefficients in the endomorphism algebra of $A$, at every non-critical integer (see Corollary \ref{cor 1}).

In the case $A$ is an elliptic curve defined over $\Q$, Theorem \ref{main thm} is a direct consequence of Beilinson's theorem on the modular curve $X_1(N)$, together with the existence of a modular parametrization $X_1(N) \to A$. In the case $K=\Q$, the simple abelian varieties satisfying the assumption of Theorem \ref{main thm} are precisely the classical modular abelian varieties $A_f$ attached to newforms $f$ of weight $2$ on $\Gamma_1(N)$. Note that in general $A_f$ may split over an abelian extension of $\Q$, and Theorem \ref{main thm} applies to every factor of $A_f$.

In the particular case of elliptic curves, Theorem \ref{main thm} has the following consequence on Zagier's conjecture (see \cite{wildeshaus:ezc} for the statement of Zagier's conjecture). Recall that a $\Q$-curve is an elliptic curve $E$ over a number field $K$ which is isogenous to all its Galois conjugates.

\begin{cor}\label{main cor}
Let $E$ be a $\Q$-curve without complex multiplication over a number field $K$ such that $L(E/K,s)$ is a product of $L$-functions of newforms of weight $2$. Then the weak form of Zagier's conjecture on $L(E/K,2)$ holds.
\end{cor}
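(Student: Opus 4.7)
The plan is to deduce the weak form of Zagier's conjecture from the weak Beilinson conjecture furnished by Theorem \ref{main thm}, by making Beilinson's motivic cohomology classes explicit and rewriting their Deligne regulator in terms of the elliptic dilogarithm.

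First I would reduce to a setting where Theorem \ref{main thm} applies directly. If $K/\Q$ is not Galois, let $L$ denote its Galois closure. Since $E$ is a $\Q$-curve, the base change $E_L/L$ is still strongly modular: the newforms appearing in $L(E/K,s)$ are permuted by the natural action of $\Gal(L/\Q)$ on Fourier coefficients, so $L(E_L/L,s)$ again factors as a product of weight~$2$ newform $L$-functions, still without CM since $E$ is non-CM. Theorem \ref{main thm} then yields the weak Beilinson conjecture for $L(E_L/L,2)$. Applying the equivariant refinement of Corollary \ref{cor 1} and decomposing along characters $\chi$ of $\Gal(L/K)$ via $L(E_L/L,s) = \prod_\chi L(E/K,\chi,s)$, I would isolate the trivial-character component to obtain the weak Beilinson conjecture for $L(E/K,2)$ itself.

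Next I would make the motivic classes explicit. The proof of Theorem \ref{main thm} starts from Beilinson's Eisenstein symbols $\{u_a,u_b\}$ on $X_1(N)$, built from pairs of modular (Siegel) units, and pushes them through the Hecke-equivariant projection cutting out $E$ as a quotient of $J_1(N)$ (over a suitable extension). The resulting classes in $H^2_{\mathcal{M}}(E,\Q(2))$ are represented by symbols $\{f,g\}$ whose divisors are supported on the images of the cusps, hence on torsion points of $E(\overline{K})$. The Bloch--Beilinson formula then expresses the Deligne regulator of $\{f,g\}$ as a $\Z$-linear combination of values of the elliptic dilogarithm $D_E$ at differences of such torsion points. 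Combining with the first step rewrites $L(E/K,2)$, up to a non-zero rational factor, as a $\Q$-linear combination of values of $D_E$ at $K$-rational torsion divisors; non-vanishing of the resulting combination is automatic since $L(E/K,2) \neq 0$ at a non-critical integer.

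It remains to verify that this combination defines an element of the elliptic Bloch group $\BB_2(E/K)$ in the sense of Wildeshaus. Because it originates as the image of a genuine motivic cohomology class, the defining relations (functional equation of $D_E$ and torsion identities) are forced to hold. The principal obstacle I anticipate lies in the Galois descent: Corollary \ref{cor 1} is most naturally stated over the Galois closure, and transferring the rational-multiple assertion down to $K$ requires careful bookkeeping of the $\Gal(L/K)$-action on both sides of Beilinson's conjecture via Artin formalism and the equivariant structure on motivic cohomology. A secondary point is checking that the explicit combinatorics of modular-unit divisors on $X_1(N)$ yield, after push-forward, elements of Wildeshaus's precise incarnation of $\BB_2(E/K)$; this is essentially automatic in the weak form, where one only requires containment up to $\Q^{\times}$.
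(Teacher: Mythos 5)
Your second half — weak equivariant Beilinson for $L(E/K,2)$, Beilinson elements given by Milnor symbols of Siegel units on $X_1(N)$, push-forward to $E$ through the Hecke-cut quotient so that the relevant divisors are supported on torsion points (Manin--Drinfeld), and the Bloch--Beilinson formula converting the regulator into elliptic dilogarithm values — is exactly the derivation the paper delegates to \cite{brunault:LEF} (with Wildeshaus's formulation \cite{wildeshaus:ezc} of the weak conjecture), so that part matches the intended proof. The genuine gap is in your opening reduction to the Galois closure $L$ of $K$. The assertion that $E_L/L$ is still strongly modular because ``$\Gal(L/\Q)$ permutes the newforms'' is not a proof: by Artin formalism one has $L(E_L/L,s)=\prod_{\rho} L\bigl(\Ind_{G_K}^{G_\Q}(V_\ell(E)\otimes\rho),s\bigr)^{\dim\rho}$, the product running over the irreducible representations $\rho$ of $\Gal(L/K)$, and when $\Gal(L/K)$ is non-abelian these factors are $L$-functions of higher-dimensional Artin twists of $E$, which have no reason to be products of weight-$2$ newform $L$-functions; your subsequent decomposition $L(E_L/L,s)=\prod_{\chi} L(E/K,\chi,s)$ over characters likewise presupposes $\Gal(L/K)$ abelian. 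So, as written, the reduction fails precisely in the case it was introduced to handle, and you yourself flag this descent as the principal unresolved obstacle.

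The paper closes this point by a different and simpler observation: by Corollary \ref{strongly modular 2} together with the Remark following it (Quer's theorem \cite{quer}), the hypotheses of Corollary \ref{main cor} already force $K/\Q$ to be abelian and $E$ to be $K$-isogenous to $B_{f,K}$ for some non-CM newform $f$. Hence Theorem \ref{main thm 2} (applied with $F=F'=K$) gives Conjecture \ref{conj4} for $L(E/K,n)$ directly over $K$ (this is Corollary \ref{cor 2}), and no passage to a Galois closure, character isolation, or descent bookkeeping is needed. If you replace your first step by this citation, the remainder of your argument coincides with the paper's: Corollary \ref{main cor} follows from Corollary \ref{cor 2} by the elliptic-dilogarithm computation of \cite{brunault:LEF}.
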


Note that Corollary \ref{main cor} is a generalization of \cite[Thm 1, Cor]{brunault:LEF}, where we considered the case of the base change of an elliptic curve $E/\Q$ to an abelian number field. Note also that the case of CM elliptic curves was already worked out by Deninger \cite{deninger:hecke1,deninger:hecke2}.

Thanks to the work of Ribet and the proof of Serre's conjecture due to Khare and Wintenberger, $\Q$-curves are known to be modular in the sense that they admit a modular parametrization by $X_1(N)_{\Qb}$ for some integer $N$. If this parametrization happens to be defined over an abelian number field, then Corollary \ref{main cor} applies. For instance, Corollary \ref{main cor} applies to every non-CM $\Q$-curve $E$ defined over a quadratic field $K$ whose isogeny $E \to E^{\sigma}$ is also defined over $K$. To our knowledge, this is the first proof of Zagier's conjecture on $L(E,2)$ for a non-CM elliptic curve which is genuinely defined over a number field. 

Using Goncharov's results \cite{goncharov:LE3}, we also get the following consequence on Deninger's conjecture on $L(E,3)$ (see \cite{deninger:higher,goncharov:LE3} for an account of Deninger's conjecture).

\begin{cor}\label{main cor 2}
Let $E$ be a $\Q$-curve without complex multiplication over a number field $K$ such that $L(E/K,s)$ is a product of $L$-functions of newforms of weight $2$. Then the weak form of Deninger's conjecture on $L(E/K,3)$ holds.
\end{cor}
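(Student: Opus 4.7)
The plan is to combine Theorem \ref{main thm}, applied at $n=3$, with Goncharov's explicit construction in \cite{goncharov:LE3} of motivic cohomology classes on modular curves whose Beilinson regulator encodes $L(f,3)$ for a weight-$2$ newform $f$. Theorem \ref{main thm} alone provides abstract $\Q$-rational generators of the distinguished line in Deligne cohomology relevant for $L(E/K,3)$, whereas Deninger's conjecture in its weak form is the stronger assertion that this line is generated by explicit elements built from elliptic polylogarithms on $E$. Goncharov's work produces precisely such explicit elements on $X_1(N)$; the remaining task is to transport them equivariantly to $E/K$.

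Concretely, I would first apply Theorem \ref{main thm} to identify the determinant of the Beilinson regulator on a $\Q$-rational subspace of $H^3_{\mathcal{M}}(E/K,\Q(3))$ with $L(E/K,3)$ modulo $\Q^*$. Next, using the fact, established in \S \ref{modular endo}, that $H^1(E/K)$ is cut out as a Hecke-equivariant summand of $H^1(J_1(N)/K)$, I would pull Goncharov's elements back from the motivic cohomology of $X_1(N)$ (or the appropriate Kuga--Sato variety) to classes on the $f$-isotypic components attached to $E$, one newform at a time. Summing equivariantly over the $\Gal(\Qb/K)$-orbit of each newform in the strongly modular decomposition then produces classes actually defined over $K$, in the Burns--Flach equivariant framework used to prove Theorem \ref{main thm}.

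The main obstacle will be verifying that these transported classes span the full line produced by Theorem \ref{main thm} rather than a proper subspace; equivalently, that summing Goncharov's elements over the Galois orbit does not destroy the regulator contribution of any individual newform summand. This reduces to a componentwise nonvanishing of the regulator, which follows from Goncharov's explicit formula for the Beilinson regulator in terms of triple Eisenstein--Kronecker series, together with the non-vanishing of $L(f,3)$ at the non-critical integer $3$ for the weight-$2$ non-CM newforms $f$ occurring in $L(E/K,s)$. Once this nonvanishing is in place, the weak form of Deninger's conjecture for $L(E/K,3)$ follows by matching the equivariant determinant of the Beilinson regulator computed from Goncharov's elements with the one supplied by Theorem \ref{main thm}.
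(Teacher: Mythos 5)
Your proposal takes essentially the same route as the paper: Corollary \ref{main cor 2} is obtained by combining the weak equivariant Beilinson statement at $n=3$ (Theorem \ref{main thm}, in the form of Corollary \ref{cor 2}, whose distinguished subspace is already built from Beilinson's Eisenstein elements transported to $E$ via the Hecke projector of \S \ref{modular endo}) with Goncharov's computation in \cite{goncharov:LE3} expressing the regulators of these elements through elliptic polylogarithms. The only caveat is that your ``main obstacle'' is vacuous: $s=3$ lies in the region of absolute convergence, so $L(f^\sigma\otimes\chi,3)\neq 0$ automatically, and the fact that the transported classes span the relevant $\Q$-structure is already part of the conclusion of Conjecture \ref{conj4} (equivalently of Beilinson's theorem, since $L'(\pi,2-n)\in(\Qb\otimes\R)^\times$), so no separate nonvanishing argument is needed.
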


The proof of Theorem \ref{main thm} builds on the profound results of Beilinson \cite{beilinson:2} on values of $L$-functions associated to modular forms. The main technical ingredients are a Hecke-equivariant version of Beilinson's theorem, together with a modularity result for endomorphism algebras. More precisely, we show that every endomorphism of a modular abelian variety $A_f$ which is defined over an abelian extension of $\Q$ is of automorphic origin, making slightly more precise a theorem of Ribet (see \S \ref{modular endo}).

In order to deal with Beilinson's conjecture for the factors of $J_1(N)$ over $\Qb$, we are naturally led to use the equivariant formalism developed by Burns and Flach \cite{burns-flach}. The need for this formalism can be explained as follows. Imagine that an abelian variety $A/\Q$ decomposes as a product $A_1 \times A_2$, so that the $L$-function of $A$ decomposes as $L(A,s) = L(A_1,s) L(A_2,s)$. If Beilinson's conjecture (at some integer) holds for $A_1$ and $A_2$, then it holds for $A$, but the converse is not clear. If we use the equivariant $L$-function instead, then Beilinson's conjecture for $A$ implies readily the conjecture for $A_1$ and $A_2$.

Throughout the article, we will work with Chow motives of abelian varieties. Although this could be avoided, we chose this point of view because we need to transport Beilinson's theorem from a modular curve to its Jacobian variety, and this is most naturally done by comparing the Chow motive of this curve with the Chow motive of its Jacobian.

This work originates in an invitation at Kyoto University in October 2010. I gave a lecture on the results of \cite{brunault:LEF} and Prof. Hida suggested that the same method could work for $\Q$-curves. I would like to thank Prof. Hida for his valuable suggestion. I am also grateful to the referee for several helpful comments. I would like to thank Xavier Guitart for providing me a reference allowing me to drop the assumption that $K$ is Galois in Theorem \ref{main thm}. Finally, I would like to thank Frédéric Déglise, Gabriel Dospinescu, Vincent Pilloni for stimulating discussions on related topics.

\section{The equivariant Beilinson conjecture}

In this section we review the formulation of the equivariant Beilinson conjecture for Chow motives endowed with an action of a semisimple algebra. We consider Chow motives with coefficients in a number field $E$. The results in this section remain valid if $E$ is an arbitrary subfield of $\Qb$ (although in this case $E \otimes_\Q \R$ need not be semisimple), but we won't need this level of generality.

\begin{notations}
For any $\Q$-vector space $V$ and any field $F$ of characteristic $0$, we put $V_F=V \otimes_\Q F$. For any ring $R$, we denote by $Z(R)$ the center of $R$.
\end{notations}

\subsection{Chow motives with coefficients}

Let us review some background material on Chow motives.

Let $K$ and $E$ be two number fields. The category $\CHM_K(E)$ of Chow motives defined over $K$ with coefficients in $E$ consists of triples $(X_d,p,n)$ where $X_d$ denotes a $d$-dimensional smooth projective $K$-variety, $p \in \CH^d(X \times_K X) \otimes E$ is an idempotent and $n \in \Z$ (\cite[Chapter 2]{murre-nagel-peters}, \cite[\S 4]{jannsen:deligne}). Morphisms between two objects $M=(X_d,p,m)$ and $N=(Y_e,q,n)$ in $\CHM_K(E)$ are given by
\begin{equation*}
\Hom(M,N) = q \circ \bigl(\CH^{d+n-m}(X \times_K Y) \otimes E)\bigr) \circ p.
\end{equation*}
Note that $\End(M)$ is an $E$-algebra, and every idempotent $e \in \End(M)$ has kernel and image in $\CHM_K(E)$. Note that we use contravariant notations for our motives: there is a contravariant functor sending a smooth projective $K$-variety $X$ to the motive $h(X)=(X,\Delta_X,0)$, where $\Delta_X$ denotes the class of the diagonal in $X \times X$. Any morphism $\phi : X \to Y$ gives rise to a morphism $\phi^* : h(Y) \to h(X)$, defined by the class of the graph of $\phi$.

Let $M=(X_d,p,n) \in \CHM_K(E)$ be a Chow motive, and let $0 \leqslant i \leqslant 2d$ be an integer. We will denote by the formal notation $H^i(M)$ the following system of realizations:

\begin{itemize}
\item for any embedding $\sigma : K \hookrightarrow \CC$, the Betti realization
\begin{equation*}
H^i_{\B,\sigma}(M) = p^* H^i_{\B}(X_\sigma(\CC),E(n));
\end{equation*}
\item the de Rham realization
\begin{equation*}
H^i_{\dR}(M) = p^* (H^i_{\dR}(X) \otimes_\Q E);
\end{equation*}
\item for any prime $\ell$, the $\ell$-adic étale realization
\begin{equation*}
H^i_{\et}(M) = p^* \bigl[ H^i_{\et}(X_{\overline{K}},\Z_\ell(n)) \otimes_{\Z_\ell} (E \otimes_\Q \Q_\ell)\bigr].
\end{equation*}
\end{itemize}
These realizations are linked by the following comparison theorems. For any embedding $\sigma : K \hookrightarrow \CC$, we have an isomorphism of $E \otimes \CC$-modules (Grothendieck's theorem)
\begin{equation}\label{I sigma}
I_{\sigma} : H^i_{\B,\sigma}(M) \otimes_\Q \CC \xrightarrow{\cong} H^i_{\dR}(M) \otimes_{K,\sigma} \CC,
\end{equation}
and for any embedding $\tilde{\sigma} : \overline{K} \hookrightarrow \CC$ extending $\sigma$, we have an isomorphism of $E \otimes \Q_\ell$-modules
\begin{equation*}
I_{\ell,\tilde{\sigma}} : H^i_{\B,\sigma}(M) \otimes_\Q \Q_\ell \xrightarrow{\cong} H^i_{\et}(M).
\end{equation*}
By definition, the \emph{weight} of $H^i(M)$ is $i-2n$. Recall the Hodge decomposition $H^i_{\B,\sigma}(M) \otimes_{\Q} \CC = \bigoplus_{a+b=i-2n} H^{a,b}_\sigma(M)$. The $E$-vector space $H^i_{\dR}(M)$ carries a decreasing filtration $(\Fil^k H^i_{\dR}(M))_{k \in \Z}$ such that
\begin{equation*}
(\Fil^k H^i_{\dR}(M)) \otimes_{K,\sigma} \CC = I_\sigma \Bigl(\bigoplus_{a \geq k} H^{a,b}_\sigma(M)\Bigr).
\end{equation*}
We put $H^i_\B(M) = \bigoplus_{\sigma : K \hookrightarrow \CC} H^i_{\B,\sigma}(M)$, so that the various isomorphisms (\ref{I sigma}) combine to give
\begin{equation*}
I_\infty : H^i_B(M) \otimes_\Q \CC \xrightarrow{\cong} H^i_\dR(M) \otimes_\Q \CC.
\end{equation*}
We put $\Fil^k (H^i_B(M) \otimes_\Q \CC) = I_\infty^{-1}(\Fil^k H^i_{\dR}(M) \otimes_\Q \CC)$ and $\overline{\Fil}^k = (\id \otimes c)(\Fil^k)$, where $c$ denotes the complex conjugation. Note that $\overline{\Fil}^k (H^i_B(M) \otimes_\Q \CC) = \bigoplus_{\sigma} \bigoplus_{b \geq k} H^{a,b}_{\sigma}(M)$.

Let $A$ be an $E$-algebra. We denote by $\CHM_K(A)$ the category of Chow motives in $\CHM_K(E)$ endowed with an action of $A$. Its objects are pairs $(M,\rho)$ with $M \in \CHM_K(E)$ and $\rho : A \to \End(M)$ is a morphism of $E$-algebras. Morphisms in $\CHM_K(A)$ are morphisms in $\CHM_K(E)$ commuting with the action of $A$. The category $\CHM_K(A)$ is additive but not abelian. If $M \in \CHM_K(A)$ then all realizations of $M$ have natural structures of left $A$-modules, and the comparison isomorphisms are $A$-linear. If $e \in A$ is an idempotent then we may define $e(M) \in \CHM_K(eAe)$.

Assume $A$ is finite-dimensional and semisimple. Conjecturally, we then have an equivariant $L$-function $L({}_A H^i(M),s)$ taking values in the center $Z(A_\CC)$ of $A_\CC := A \otimes_{\Q} \CC$ \cite[\S 4]{burns-flach}. This function should be meromorphic in the sense that for every embedding $\sigma$ of $E$ into $\CC$, the function $s \mapsto L({}_A H^i(M),s)^\sigma \in Z(A \otimes_{E,\sigma} \CC)$ is meromorphic. In the case $A=E=\Q$, we recover the usual complex-valued $L$-function $L(H^i(M),s)$. For any $M \in \CHM_K(A)$ and any integer $n \in \Z$, we denote by $M(n) := M \otimes E(n)$ the $n$-th Tate twist of $M$. Recall that
\begin{equation}\label{L shift}
L({}_A H^i(M(n)),s) = L({}_A H^i(M),s+n) \qquad (s \in \CC).
\end{equation}

For any motive $M=(X_d,p,n)$, we denote by $M^* = (X_d,{}^t p,d-n)$ the dual motive. If $A$ acts on $M$ then $A^{\op}$ acts on $M^*$. Conjecturally, the function $L({}_A H^i(M),s)$ extends to a meromorphic function on $\CC$ and there is a functional equation relating $L({}_A H^i(M),s)$ and $L({}_{A^{\op}} H^{2d-i}(M^*),1-s)$.

\begin{remark}
According to \cite[6.2]{scholl:motives}, given a smooth projective variety $X/K$ of dimension $d$, there should be a direct sum decomposition $h(X) = \bigoplus_{i=0}^{2d} h^i(X)$ in the category $\CHM_K(\Q)$. Such a decomposition is known in the case $X/K$ is an abelian variety \cite{deninger-murre}, which is the only case we will consider in this paper. In this case, we even have canonical Chow-Künneth projectors $p_0,\ldots,p_{2d} \in \End(h(X))$ such that $(X,p_i,0) \cong h^i(X)$ for every $0 \leqslant i \leqslant 2d$. These projectors are compatible with morphisms of abelian varieties \cite[Prop 3.3]{deninger-murre}. Thus for every morphism $\phi : X \to Y$ of abelian varieties over $K$, we get morphisms $\phi^* : h^i(Y) \to h^i(X)$.
\end{remark}

%\begin{remark}\label{rmk E0}
%Although we do not assume that $E/\Q$ is finite, we may in practice reduce to this case thanks to the following fact. If $A$ is any finite-dimensional $E$-algebra acting on $M \in \CHM_K(E)$, there exists a finite subextension $E_0/\Q$ of $E/\Q$, a motive $M_0 \in \CHM_K(E_0)$ and a finite-dimensional $E_0$-algebra $A_0$ acting on $M_0$ such that $(M,A)$ arises from $(M_0,A_0)$ by extending the scalars from $E_0$ to $E$.
%\end{remark}

\subsection{Relative \texorpdfstring{$K$}{K}-theory}

Let $E$ be a number field, and let $A$ be a finite-dimensional semisimple $E$-algebra. The $A$-equivariant versions of the Beilinson conjectures are most conveniently formulated using the relative $K$-group $K_0(A,\R)$. Recall that $K_0(A,\R)$ is an abelian group generated by triples $(X,f,Y)$ where $X$ and $Y$ are finitely generated $A$-modules and $f : X_\R \to Y_\R$ is an isomorphism of $A_\R$-modules \cite[p. 215]{swan}. Note that $X$ and $Y$ are automatically projective since $A$ is semisimple. This group sits in an exact sequence \cite[Thm 15.5]{swan}
\begin{equation}\label{K0AR long}
K_1(A) \to K_1(A_\R) \xrightarrow{\delta} K_0(A,\R) \to K_0(A) \to K_0(A_\R).
\end{equation}

\begin{lem}\label{lem K0AR}
The map $K_0(A) \to K_0(A_\R)$ is injective.
\end{lem}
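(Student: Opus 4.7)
The plan is to exploit the Wedderburn decomposition of $A$ and track it through the base change to $\R$. By Remark~\ref{rmk E0}, we may reduce to the case where $E/\Q$ is finite, so that $A$ is finite-dimensional as a $\Q$-algebra. Since $E/\Q$ is separable algebraic and $A$ is semisimple over $E$, $A$ is also semisimple as a $\Q$-algebra. Writing $A = \prod_{i=1}^r A_i$ as a product of simple $\Q$-algebras with unit elements $e_1,\ldots,e_r$, we have $K_0(A) \cong \bigoplus_{i=1}^r \Z \cdot [V_i]$, where $V_i$ denotes the (up to isomorphism unique) simple $A_i$-module.

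The idempotents $e_i$ remain pairwise orthogonal central idempotents of $A_\R$ summing to $1$, yielding a ring decomposition $A_\R = \prod_{i=1}^r (A_i)_\R$, which is valid even though $A_\R$ need not be semisimple. Correspondingly, $K_0(A_\R) = \bigoplus_{i=1}^r K_0((A_i)_\R)$. The natural map $K_0(A) \to K_0(A_\R)$ induced by $P \mapsto P \otimes_\Q \R$ sends $[V_i]$ to the class $[V_i \otimes_\Q \R]$, which is supported in the $i$-th summand $K_0((A_i)_\R)$ since $V_i$ is annihilated by $e_j$ for $j \neq i$.

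It therefore suffices to show that $[V_i \otimes_\Q \R] \neq 0$ in $K_0((A_i)_\R)$ for each $i$. For this we use the $\R$-dimension, which yields a well-defined group homomorphism $\dim_\R : K_0((A_i)_\R) \to \Z$, $[P] \mapsto \dim_\R P$ (valid for any finite-dimensional $\R$-algebra). Since $V_i$ is a nonzero $\Q$-vector space and $\R$ is flat over $\Q$, the $\R$-vector space $V_i \otimes_\Q \R$ is nonzero, hence has positive $\R$-dimension. Combining everything, if $\sum_i m_i [V_i]$ maps to $0$, then in each summand $m_i [V_i \otimes_\Q \R] = 0$, forcing $m_i = 0$. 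I do not anticipate a genuine obstacle; the only subtle point is the compatibility between the Wedderburn decomposition of $A$ as a $\Q$-algebra (rather than as an $E$-algebra) and the block decomposition of $A_\R$, which is guaranteed by the common system of central idempotents.
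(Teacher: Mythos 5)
Your argument is correct in substance, but it takes a genuinely different route from the paper's. The paper never decomposes over $\Q$: it reduces to the simple case $A=M_n(D)$, uses $K_0(M_n(D))\cong K_0(D)\cong \Z$, picks an embedding $\sigma:E\hookrightarrow\CC$, and observes that the composite $\Z\cong K_0(D)\to K_0(D_\R)\to K_0(D\otimes_{E,\sigma}\CC)\to K_0(M_n(\CC))\cong\Z$ sends $1$ to $n$; this works verbatim for arbitrary $E\subset\Qb$, with no finiteness assumption on $E/\Q$. Your proof instead splits $A$ into its simple blocks and applies the $\R$-dimension homomorphism on $K_0$ of each block $(A_i)_\R$; on each block this is fine (and you are right that the blockwise argument is needed, since a single global $\dim_\R$ would not separate coefficients of different signs), and it is arguably more elementary, avoiding Morita reduction and complex embeddings. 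The one point to repair is the opening reduction: Remark \ref{rmk E0} is a statement about Chow motives and their coefficient algebras, and it does not by itself reduce this purely algebraic lemma to the case $E/\Q$ finite. What you actually need -- and it is easy -- is that $A$ descends to $A_0\otimes_{E_0}E$ for some finite subextension $E_0/\Q$ (structure constants), so that $A=\varinjlim_{E'} A_0\otimes_{E_0}E'$ over the finite subextensions $E'$ of $E/E_0$, together with the facts that $-\otimes_\Q\R$ and $K_0$ commute with filtered colimits, so that injectivity at each finite level (which your dimension argument provides) passes to the colimit; this is exactly the direct-limit device the paper uses in Lemmas \ref{lem nr} and \ref{lem K1 cartesien}. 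With that substitution in place of the appeal to Remark \ref{rmk E0}, your proof is complete, at the cost of an extra reduction that the paper's argument does not need.
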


\begin{proof}
It suffices to consider the case where $A$ is simple, in other words $A=M_n(D)$ for some division algebra $D$ over $E$. Since $K_0(M_n(B))$ is canonically isomorphic to $K_0(B)$ for every ring $B$, it suffices to prove the injectivity of $K_0(D) \to K_0(D_\R)$. Let $\sigma$ be an embedding of $E$ into $\CC$. Since $D \otimes_{E,\sigma} \CC$ is semisimple, there exists a ring morphism $D \otimes_{E,\sigma} \CC \to M_n(\CC)$. Now the composite map
\begin{equation*}
\Z \cong K_0(D) \to K_0(D_\R) \to K_0(D \otimes_{E,\sigma} \CC) \to K_0(M_n(\CC)) \cong \Z
\end{equation*}
sends $1$ to $n$, thus is injective.
\end{proof}

Since $A$ is semisimple, we have a reduced norm map $\nr : K_1(A) \to Z(A)^\times$ \cite[\S 45A]{curtis-reiner2}.

\begin{lem}\label{lem nr}
The reduced norm map $\nr$ is injective.
\end{lem}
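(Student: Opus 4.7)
The plan is to reduce the injectivity of the reduced norm to the classical vanishing of $SK_1$ for central simple algebras over number fields. First, decompose $A = \prod_i A_i$ as a product of its simple factors; then $K_1$, the centre, and the reduced norm all split as products over these factors, so it suffices to handle each $A_i$ separately. By Morita equivalence, one has $K_1(M_n(D)) \cong K_1(D)$ and $Z(M_n(D)) = Z(D)$ compatibly with reduced norms, so we reduce further to the case $A = D$, a central division algebra over $F := Z(D)$.

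By Dieudonné's determinant theorem, $K_1(D) = D^\times/[D^\times,D^\times]$ and $\ker(\nr)$ is by definition $SK_1(D)$, so the task is to show $SK_1(D) = 0$. Since $F$ is a finite extension of $E \subseteq \Qb$, it is algebraic over $\Q$, but possibly of infinite degree. In the spirit of Remark \ref{rmk E0}, the finite-dimensional algebra $D$ descends to a central simple algebra $D_0$ over a number field $F_0 \subseteq F$, i.e.\ $D = D_0 \otimes_{F_0} F$. Writing $F$ as the directed union of its subfields $F_\alpha$ finite over $F_0$, one has $D = \varinjlim (D_0 \otimes_{F_0} F_\alpha)$, and since $K_1$ and the reduced norm commute with filtered colimits of rings, it is enough to show $SK_1$ vanishes on each $D_0 \otimes_{F_0} F_\alpha$, a central simple algebra over the number field $F_\alpha$.

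At this point one invokes the classical theorem that $SK_1$ vanishes for central simple algebras over number fields, a consequence of the Hasse-Schilling-Maass norm theorem together with Wang's theorem (see \cite[\S 45]{curtis-reiner2}). The main delicacy is that the Tannaka-Artin conjecture, asserting the same statement over an arbitrary ground field, is known to be false in general by work of Platonov; the descent to a number field above is precisely what enables the invocation of the result in its correct form.
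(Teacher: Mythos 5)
Your proof is correct and follows essentially the same route as the paper: reduce to the (central) simple case, write the algebra as a filtered colimit of central simple algebras over number fields using that $K_1$ commutes with filtered colimits, and invoke the vanishing of $SK_1$ over number fields from \cite[\S 45]{curtis-reiner2}. The Morita/Dieudonné reformulation and the remark about the failure of the Tannaka--Artin conjecture are accurate added detail, not a different argument.
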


\begin{proof}
We may assume that $A$ is a central simple algebra over $E$, in which case the result follows from \cite[(45.3)]{curtis-reiner2}.
\end{proof}

The algebras $A_\R$ and $A_\CC$ are semisimple so we also have reduced norm maps on $K_1(A_\R)$ and $K_1(A_\CC)$ making the following diagram commute:
\begin{equation} \label{diag nr}
\begin{tikzcd}
K_1(A) \rar \dar{\nr} & K_1(A_\R) \rar \dar{\nr_\R} & K_1(A_\CC) \dar{\nr_\CC} \\
Z(A)^\times \rar & Z(A_\R)^\times \rar & Z(A_\CC)^\times.
\end{tikzcd}
\end{equation}
By Lemma \ref{lem nr} and diagram (\ref{diag nr}), the map $K_1(A) \to K_1(A_\R)$ is injective. The exact sequence (\ref{K0AR long}) thus simplifies to
\begin{equation}\label{K0AR short}
0 \to K_1(A) \to K_1(A_\R) \xrightarrow{\delta} K_0(A,\R) \to 0.
\end{equation}

\begin{example}
Let us consider the classical case, namely $A=E=\Q$. Then $K_1(A)=\Q^\times$ and $K_1(A_\R)= \R^\times$ so that $K_0(A,\R)$ can be identified with $\R^\times/\Q^\times$. Moreover, using this identification, the class of $(X,f,Y)$ in $K_0(A,\R)$ is none other than the determinant of $f$ with respect to bases of $X$ and $Y$.
\end{example}

\begin{lem}\label{lem K1 cartesien}
The map $\nr_\R$ is injective and the map $\nr_\CC$ is an isomorphism. Moreover, the left-hand square of diagram (\ref{diag nr}) is Cartesian: identifying the groups $K_1(A)$, $K_1(A_\R)$ and $Z(A)^\times$ with subgroups of $Z(A_\R)^\times$, we have
\begin{equation}\label{eq K1 cartesien}
Z(A_\R)^\times = Z(A)^\times \cdot K_1(A_\R) \qquad \textrm{and} \qquad K_1(A) = Z(A)^\times \cap K_1(A_\R).
\end{equation}
\end{lem}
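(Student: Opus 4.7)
The plan is to reduce step by step to the case where $A = D$ is a central simple division algebra over a number field, and then invoke Hasse--Schilling--Maass together with weak approximation of signs. First, as in the proof of Lemma \ref{lem nr}, write $A = \varinjlim_{E'} A_0 \otimes_{E_0} E'$ over the finite subextensions $E'$ of $E/E_0$; all the groups appearing in the statement ($K_1$, centers, units, images of reduced norms) then arise as filtered colimits of the corresponding groups at each finite level, so the claims reduce to the case where $E$ is a number field. Decomposing $A = \prod A_i$ into simple $E$-algebras, each $A_i \cong M_{n_i}(D_i)$ with $D_i$ a central simple division algebra over a number field, and using compatibility of $K_1$, centers and reduced norms with products and Morita equivalence, it suffices to treat $A = D$.

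For $\nr_\CC$: $A_\CC = \prod_{\sigma : E \hookrightarrow \CC} D \otimes_{E,\sigma} \CC$ is a product of matrix algebras over $\CC$, on each of which the reduced norm is the determinant and so induces an isomorphism on $K_1$. For $\nr_\R$: decompose $A_\R = \prod_v D_v$ over the archimedean places of $E$; each $D_v$ is a matrix algebra over $\R$, $\CC$, or the Hamilton quaternions $\mathbb{H}$ (the quaternion case arising exactly at the real places where $D$ is ramified). The reduced norm on $K_1$ is injective in all three cases: it is the determinant for matrices over $\R$ or $\CC$, and for $M_n(\mathbb{H})$ injectivity into $\R^\times$, with image $\R_{>0}^\times$, rests on Wang's theorem $SK_1(\mathbb{H}) = 0$.

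For the Cartesian property, identify the four groups with subgroups of $Z(A_\R)^\times = E_\R^\times = \prod_v E_v^\times$ via the (injective) reduced norms. Hasse--Schilling--Maass identifies $\nr(K_1(D)) \subseteq E^\times$ with the group of elements of $E^\times$ which are positive at every real place of $E$ where $D$ ramifies, while the computation above describes $\nr_\R(K_1(D_\R))$ as the subgroup of tuples $(x_v) \in E_\R^\times$ which are positive at every such ramified real place. The second equality in (\ref{eq K1 cartesien}) is then immediate by intersecting with $E^\times$; the first follows from weak approximation of signs at the real places of $E$: given $(x_v) \in E_\R^\times$, choose $e \in E^\times$ whose sign at each ramified real place matches that of $x_v$, so that $(x_v/e) \in \nr_\R(K_1(D_\R))$ and $(x_v) = e \cdot (x_v/e) \in Z(A)^\times \cdot K_1(A_\R)$. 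I expect the only substantive analytic input, and therefore the essential step, to be the Hasse--Schilling--Maass theorem; everything else is either a formal reduction or a direct calculation in the three archimedean division algebras.
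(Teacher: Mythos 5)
Your proposal is correct and follows essentially the same route as the paper: reduce by direct limits to a number field and a central simple algebra, describe the images of $\nr$ and $\nr_\R$ via Hasse--Schilling--Maass (the paper cites \cite[(45.3)]{curtis-reiner2} for exactly these facts, including the injectivity of $\nr_\R$ and bijectivity of $\nr_\CC$ that you verify by hand at the archimedean places), and deduce the Cartesian property from the sign conditions together with approximation of signs by elements of $E^\times$, which is the paper's density argument in slightly different clothing.
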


\begin{proof}
We may assume that $A$ is a central simple algebra over $E$. The injectivity of $\nr_\R$ and the bijectivity of $\nr_\CC$ are proved in \cite[(45.3)]{curtis-reiner2}. Let $\Sigma_\infty$ be the set of archimedean places of $E$. For any $v \in \Sigma_\infty$, let $A_v := A \otimes_E E_v$, so that $A_\R \cong \prod_{v \in \Sigma_\infty} A_v$ and $Z(A_\R)^\times = (E \otimes_\Q \R)^\times = \prod_{v \in \Sigma_\infty} E_v^\times$. Let $\Sigma$ be the set of places $v \in \Sigma_\infty$ such that $E_v = \R$ and $A_v$ is isomorphic to a matrix algebra over the real quaternions. By \cite[(45.3)]{curtis-reiner2}, we have
\begin{align}
\label{nr} \nr(K_1(A)) & = \{x \in E^\times ; x_v>0 \textrm{ for every } v \in \Sigma\}\\
\label{nrR} \nr_\R(K_1(A_\R)) & = \{(x_v)_{v \in \Sigma_\infty} ; x_v>0 \textrm{ for every } v \in \Sigma\}.
\end{align}
In particular the image of $\nr_\R$ contains the connected component of identity in $Z(A_\R)^\times$. Since $E^\times$ is dense in $(E \otimes_\Q \R)^\times$, the first identity of (\ref{eq K1 cartesien}) follows. The second equation is an immediate consequence of (\ref{nr}) and (\ref{nrR}).
\end{proof}

Following the terminology of \cite[\S 4.2]{burns-flach}, the \emph{extended boundary map} $\hat{\delta} : Z(A_\R)^\times \to K_0(A,\R)$ is the unique extension of $\delta$ to $Z(A_\R)^\times$ which vanishes on $Z(A)^\times$ (such an extension exists and is unique by Lemma \ref{lem K1 cartesien}).

Let $X,Y,Z$ be finitely generated $A$-modules, together with a short exact sequence of $A_\R$-modules
\begin{equation*}
0 \to X_\R \xrightarrow{\alpha} Y_\R \xrightarrow{\beta} Z_\R \to 0.
\end{equation*}
Since $Z_\R$ is projective over $A_\R$, this sequence splits and the map $\beta$ admits a section $s : Z_\R \to Y_\R$. Then the element $\vartheta=(X \oplus Z,\alpha \oplus s,Y) \in K_0(A,\R)$ is independent of the choice of $s$.

\subsection{Statement of the conjecture in the region of convergence} \label{conj statement}

In this section we state an equivariant version of Beilinson's conjecture on special values of $L$-functions in the region of absolute convergence. This conjecture is a particular case of the equivariant Tamagawa number conjecture of Burns and Flach \cite[\S 4.3]{burns-flach}. Since we don't consider the integrality part of the conjecture in this article, the formulation becomes in fact much simpler.

Let $E$ be a number field, and let $A$ be a finite-dimensional semisimple $E$-algebra. Fix a Chow motive $M = (X_d,p,0,\rho)$ in $\CHM_K(A)$ and an integer $0 \leqslant i \leqslant 2d$. Whenever defined, the equivariant $L$-function $L({}_A H^i(M),s)$ converges absolutely in the region $\Re(s)>\frac{i}{2}+1$, and because of the Euler product, its values at integers in this region belong to $Z(A_\R)^\times$.

Fix an integer $n >\frac{i}{2}+1$. The conjecture on $L({}_A H^i(M),n)$ involves the \emph{Beilinson regulator map}
\begin{equation*}
r_{\BB} : H^{i+1}_{\mathcal{M}/\OK}(M,E(n)) \otimes_\Q \R \to H^{i+1}_{\mathcal{D}}(M,E_\R(n)).
\end{equation*}

Let us briefly recall the definitions of the cohomology groups involved. The relevant motivic cohomology group is given by
\begin{equation*}
H^{i+1}_{\mathcal{M}}(M,E(n)) = p^* \bigl(H^{i+1}_{\mathcal{M}}(X,\Q(n)) \otimes E\bigr).
\end{equation*}
where $H^{i+1}_{\mathcal{M}}(X,\Q(n))$ is defined as Quillen's $K$-group $K_{2n-i-1}^{(n)}(X)$. Let $H^{i+1}_{\mathcal{M}/\OK}(M,E(n))$ be the subspace of integral elements defined by Scholl \cite{scholl:integral_elements}.

The Deligne cohomology group can be expressed as follows. Let $c \in \Gal(\CC/\R)$ denote complex conjugation. The isomorphisms $c^* : X_\sigma(\CC) \xrightarrow{\cong} X_{\overline{\sigma}}(\CC)$ together with complex conjugation on $\Q(n)=(2\pi i)^n \Q$ induce an $A$-linear involution $c_B : H^i_{\B}(M(n)) \to H^i_{\B}(M(n))$, which makes the following diagram commute
\begin{equation*}
\begin{tikzcd}
H^i_{B}(M(n)) \otimes_\Q \CC \rar{I_\infty} \dar[swap]{c_B \otimes c} & H^i_{\dR}(M(n)) \otimes_{\Q} \CC \dar{1 \otimes c} \\
H^i_{B}(M(n)) \otimes_\Q \CC \rar{I_\infty} & H^i_{\dR}(M(n)) \otimes_{\Q} \CC.
\end{tikzcd}
\end{equation*}
Let $H^i_\B(M(n))^{\pm}$ denote the subspace of $H^i_\B(M(n))$ where $c_B$ acts by $\pm 1$. The diagram above induces an isomorphism
\begin{align}
\nonumber H^i_\dR(M(n)) \otimes_\Q \R & \cong \bigl(H^i_B(M(n))^+ \otimes_\Q \R\bigr) \oplus \bigl(H^i_B(M(n))^- \otimes_\Q \R(-1)\bigr)\\
& \cong \bigl(H^i_B(M(n))^+ \otimes_\Q \R\bigr) \oplus \bigl(H^i_B(M(n-1))^+ \otimes_\Q \R\bigr).\label{HdR HB}
\end{align}
The \emph{Deligne period map} is the canonical map
\begin{equation*}
\alpha : H^i_B(M(n))^+ \otimes_\Q \R \to (H^i_\dR(M)/\Fil^n) \otimes_\Q \R.
\end{equation*}
Since the motive $M(n)$ has weight $i-2n<0$, we have
\begin{equation*}
\ker(\alpha) \subset (\Fil^n \cap \overline{\Fil}^n) H^i_B(M) \otimes \CC = \bigoplus_\sigma \bigoplus_{\substack{a+b=i\\ a,b \geq n}} H^{a,b}_\sigma(M) = 0
\end{equation*}
so that $\alpha$ is injective. The Deligne cohomology group of $M$ is then given by the cokernel of $\alpha$ :
\begin{equation}\label{HD 1}
0 \to H^i_B(M(n))^+ \otimes_\Q \R \xrightarrow{\alpha} (H^i_\dR(M)/\Fil^n) \otimes_\Q \R \to H^{i+1}_{\mathcal{D}}(M,E_\R(n)) \to 0.
\end{equation}

\begin{conjecture}[Beilinson] \label{conj1}
The regulator map $r_\BB$ is an isomorphism.
\end{conjecture}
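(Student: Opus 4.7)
Conjecture \ref{conj1} is the ``weak'' half of Beilinson's conjectures for general motives and is open in this generality, so any honest proof plan must restrict to the cases relevant to this paper, namely $M = h^1(A/K)$ with $A$ strongly modular. The approach I would take is to reduce to Beilinson's theorem on modular curves via the modular parametrization, exploiting the equivariant formalism in order to stay on the motivic side of the picture rather than splitting $A$ up to isogeny over $\Qb$.

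First I would use the hypothesis that $L(A/K,s)$ factors as a product of $L$-functions of weight $2$ newforms without complex multiplication, together with the automorphic refinement of Ribet's theorem outlined in \S\ref{modular endo}, to exhibit $h^1(A/K)$ as a direct summand of $h^1(J_1(N))$ cut out, after base change to a sufficiently large abelian extension $L/\Q$, by an idempotent in the Hecke algebra $\TT \otimes \End(A)$. This is precisely the purpose of introducing Chow motives with coefficients in $E$ and an $A$-action: rather than descending the Atkin--Lehner decomposition all the way to $\Q$, one keeps the motive of $(J_1(N))_L$ equipped with its Hecke action and recovers $M$ as an isotypical component.

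Next I would use the functoriality of motivic cohomology, Deligne cohomology, and the regulator under such idempotents to identify $r_\BB$ for $M$ with the corresponding isotypical projection of the regulator for $h^1((J_1(N))_L)(n)$. Beilinson's theorem \cite{beilinson:2} then supplies the essential input: for $n \geqslant 2$ the regulator on $h^1(X_1(N))$ hits all of Deligne cohomology after tensoring with $\R$, and a dimension count via the Deligne period sequence \eqref{HD 1} matches source and target on each Hecke eigenspace. Applying this to every weight $2$ newform contributing to $L(A/K,s)$ yields the desired isomorphism on the $(J_1(N))_L$ side, which transports to $M$ through the projector.

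The main obstacle will be the bookkeeping needed to match the $A$-equivariant regulator for $M$ over $K$ with the Hecke-isotypical projection of Beilinson's regulator, which naturally lives over $\Q$. Concretely one must track the $\Gal(L/\Q)$-action on all three realizations of $h^1((J_1(N))_L)$, compare Betti realizations indexed by embeddings $K \hookrightarrow \CC$ with those indexed by embeddings $L \hookrightarrow \CC$, and verify $\End(A)$-equivariance throughout; only then does the identification of $r_\BB$ with the projection of Beilinson's map truly respect the $A_\R$-module structure demanded by the formulation of Conjecture \ref{conj1}. A secondary but nontrivial check is that the elements of $H^{i+1}_\mathcal{M}(X_1(N),\Q(n))$ produced by Beilinson project to integral elements in the sense of Scholl after applying the projector.
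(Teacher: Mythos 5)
There is a genuine gap, and it is located at the very first move: the statement you are trying to prove is Conjecture \ref{conj1}, which asserts that $r_\BB$ is an isomorphism on the \emph{whole} group $H^{i+1}_{\mathcal{M}/\OK}(M,E(n)) \otimes_\Q \R$. The paper does not prove this, even for strongly modular abelian varieties, and it cannot be proved by the route you sketch. Beilinson's theorem on modular curves only produces a specific subspace $W_n \subset H^2_{\mathcal{M}}(\overline{M}_K,\Q(n))$ whose regulator image is described exactly ($R(\pi)=L'(\pi,2-n)\cdot H(\pi)$ on each Hecke isotypical piece); it gives no control whatsoever over the full motivic cohomology group, which is not known to be finite-dimensional, let alone of the dimension predicted by the Deligne period sequence (\ref{HD 1}). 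Consequently your projection argument can only transport the statement ``there exists a Hecke-stable (hence $A$-stable) subspace $W$ whose regulator image realizes an $A_\R$-isomorphism onto Deligne cohomology'' --- which is precisely the weakened Conjecture \ref{conj4} --- and never the assertion that $r_\BB$ itself is bijective on all of motivic cohomology. Neither injectivity of $r_\BB$ nor the equality $W = H^{i+1}_{\mathcal{M}/\OK}(M,E(n))$ is accessible, and the paper says so explicitly before introducing Conjecture \ref{conj4}.

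Apart from this mismatch of target, your plan is essentially the paper's actual argument for the weak statement: the idempotent $e_f$ in the Hecke algebra over a suitable abelian extension (Propositions \ref{pro decomp Omegaf}, \ref{pro ef} and Theorem \ref{thm EndAf}), the equivariant reformulation of Beilinson's theorem (Theorem \ref{beilinson equiv}), functoriality of the equivariant conjecture under change of coefficients, surjections and idempotents (Propositions \ref{func1}--\ref{func3}), and the integrality point for $n=2$ handled by \cite{schappacher-scholl}. So if you redirect the proposal at Conjecture \ref{conj4} (equivalently, at Theorem \ref{main thm} and Corollary \ref{cor 1}), it becomes a correct outline of the paper's proof; as a proof of Conjecture \ref{conj1} it fails at the step where Beilinson's theorem is asked to say something about the entire motivic cohomology group.
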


Now the idea is that both the domain and codomain of the regulator map carry natural $A$-structures, and comparing these two $A$-structures is enough to determine the equivariant $L$-value up to an element of $Z(A)^\times$. The Deligne period map and the Beilinson regulator map are $A_\R$-linear, and (\ref{HD 1}) is an exact sequence of $A_\R$-modules. Assuming Conjecture \ref{conj1}, the exact sequence (\ref{HD 1}) together with $r_\BB$ yields a canonical element $\vartheta_\infty = \vartheta_\infty(M,i,n)$ in $K_0(A,\R)$. We may now formulate the conjecture on the $L$-value as follows.

\begin{conjecture}[Burns-Flach]\label{conj2}
Let $n > \frac{i}{2}+1$ be an integer. We have the following equality in $K_0(A,\R)$
\begin{equation}\label{conj2 eq}
\hat{\delta}(L({}_A H^i(M),n)) = \vartheta_\infty(M,i,n).
\end{equation}
\end{conjecture}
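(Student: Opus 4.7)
The plan is to establish Conjecture~\ref{conj2} in the case relevant for Theorem~\ref{main thm}, namely $M = h^1(A/K)$ where $A$ is a strongly modular abelian variety over a Galois number field $K$, with the algebra action given by the endomorphism algebra of $A$. Note that with $i = 1$ the non-critical condition $n > \frac{i}{2} + 1$ reduces to $n \geqslant 2$, matching the hypothesis of Theorem~\ref{main thm}.

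First, I would realize $h^1(A/K)$ as a Hecke-cut direct summand of $h^1(J_1(N))$, suitably base-changed to an abelian extension of $\Q$. Strong modularity gives a factorization of $L(A/K, s)$ as a product of $L$-functions of weight-$2$ newforms without complex multiplication, but a priori this is only an identity of Dirichlet series. To upgrade it to a motivic decomposition, the technical input --- carried out in §\ref{modular endo} --- is a purely automorphic refinement of Ribet's theorem: every endomorphism of the modular abelian variety $A_f$ defined over an abelian extension of $\Q$ is induced by a Hecke operator. This produces explicit Hecke-built idempotents in $\CHM_K(E)$ that cut out $h^1(A/K)$ from $h^1(J_1(N))$ pulled back to the appropriate abelian extension of $\Q$.

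Second, I would apply Beilinson's theorem on modular curves, which holds naturally with $\Qb$-coefficients. For each newform $f$ appearing in the factorization, Beilinson produces integral motivic cohomology classes on $X_1(N)$ whose regulator recovers the non-critical $L$-value $L^*(f, n)$ up to a nonzero rational. Pulling these classes through the idempotents of the first step, one obtains enough classes in $H^{2}_{\mathcal{M}/\OK}(M, E(n))$ to force the Beilinson regulator $r_\BB$ to be a surjection onto the Deligne cohomology of (\ref{HD 1}); a dimension count, carried out factor by factor in $A_\CC$, then forces it to be an isomorphism, establishing Conjecture~\ref{conj1} in this setting.

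Third, I would assemble the equivariant equality (\ref{conj2 eq}) from the per-newform classical Beilinson identities. Semisimplicity of the endomorphism algebra decomposes $Z(A_\R)^\times$ into factors indexed by Galois orbits of newforms, and the equivariant $L$-function $L({}_A H^1(M), s)$ splits accordingly into products of individual $L^*(f, s)$'s. Both the extended boundary map $\hat{\delta}$, via Lemma~\ref{lem K1 cartesien}, and the canonical element $\vartheta_\infty(M, 1, n)$, via the compatibility of its construction with direct sums of $A$-modules and short exact sequences, respect this decomposition. The equivariant identity then follows by applying $\hat{\delta}$ to the collection of classical Beilinson equalities, one for each newform factor.

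The main obstacle is the first step: producing a \emph{motivic} decomposition of $h^1(A/K)$ over $K$, rather than merely over $\Qb$. Strong modularity is a purely analytic statement, and upgrading it to a geometric splitting requires understanding how endomorphisms of $A_f$ descend from $\Qb$ to the abelian extension of $\Q$ over which $A$ is defined. This descent is precisely governed by the automorphic refinement of Ribet's theorem mentioned above, and is the heart of the paper.
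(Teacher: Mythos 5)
The statement you are trying to prove is not a theorem of the paper at all: Conjecture \ref{conj2} is a genuinely open conjecture (a special case of the equivariant Tamagawa number conjecture of Burns--Flach), stated for an \emph{arbitrary} Chow motive $M\in\CHM_K(A)$ with semisimple coefficient algebra $A$. The paper never proves it; it explicitly says that Conjectures \ref{conj1}--\ref{conj3} are out of reach because the finite-dimensionality of the motivic cohomology groups is unknown, and for that reason it formulates and proves only the weakened Conjecture \ref{conj4} for the motives $H^1$ of strongly modular abelian varieties (Theorems \ref{beilinson equiv}, \ref{main thm 1}, \ref{main thm 2}, Corollary \ref{cor 1}). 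Your proposal restricts at the outset to $M=h^1(A/K)$ for strongly modular $A$, so even if every step worked it would not address the statement as given.

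More importantly, the decisive step of your argument fails even in that restricted setting. In your second step you claim that the Hecke-cut Beilinson classes make $r_\BB$ surjective onto Deligne cohomology and that ``a dimension count then forces it to be an isomorphism, establishing Conjecture \ref{conj1}.'' There is no such dimension count available: Beilinson's theorem produces a specific subspace $W_n\subset H^2_{\mathcal{M}}(\overline{M}_K,\Q(n))$ whose regulator image is a lattice of the right size, but it says nothing about the full group $H^{2}_{\mathcal{M}/\OK}(M,E(n))$, which is not known to be finite-dimensional, let alone of the dimension matching Deligne cohomology, and injectivity of $r_\BB$ on it is unknown. Without Conjecture \ref{conj1} the element $\vartheta_\infty(M,i,n)$ appearing in (\ref{conj2 eq}) is not even defined; what your construction actually yields is the element $\vartheta_\infty(W)$ attached to the Hecke-stable submodule $W$, i.e.\ exactly the content of Conjecture \ref{conj4}, which is what the paper proves. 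The first and third steps of your proposal (cutting $H^1(A/K)$ out of $H^1(J_1(N))$ over an abelian extension via the automorphic refinement of Ribet's theorem in \S\ref{modular endo}, and assembling the equivariant identity using the functoriality of $\hat{\delta}$ and the decomposition of $Z(A_\R)^\times$) do track the paper's actual route to Theorem \ref{main thm}, but they establish the weak equivariant statement, not Conjecture \ref{conj2}.
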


\begin{remark}
By taking norms down to $E_\R$, Conjecture \ref{conj2} implies Beilinson's conjecture for the classical $L$-value $L(H^i(M),n) \in E_\R^\times$. Note that in the case $A=E=\Q$, we have $K_0(A,\R) \cong \R^\times / \Q^\times$ and (\ref{conj2 eq}) is just a restatement of the usual conjecture.
\end{remark}

Assuming the meromorphic continuation of the equivariant $L$-function, we may reformulate Conjecture \ref{conj2} using $L$-values at integers to the left of the central point. For this we use a different $A$-structure in Deligne cohomology. Using (\ref{HdR HB}), we may also express Deligne cohomology as
\begin{equation}\label{HD 2}
0 \to \Fil^n H^i_\dR(M) \otimes_\Q \R \to H^i_B(M(n-1))^+ \otimes_\Q \R \to H^{i+1}_{\mathcal{D}}(M,E_\R(n)) \to 0
\end{equation}
where the first arrow is induced by the projection on the second factor of (\ref{HdR HB}). Assuming Conjecture \ref{conj1}, the exact sequence (\ref{HD 2}) together with $r_\BB$ yields a canonical element $\vartheta'_\infty=\vartheta'_\infty(M,i,n)$ in $K_0(A,\R)$.

Since $A$ is a semisimple algebra, we have a reduced rank morphism $\rr_A : K_0(A) \to H^0(\Spec Z(A),\Z)$ with values in the group of $\Z$-valued functions on $\Spec Z(A)$ \cite[\S 2.6, p. 510]{burns-flach}. For each embedding $\sigma$ of $E$ into $\CC$, we have a canonical morphism $\Spec Z(A_\sigma) \to \Spec Z(A)$, from which we get a morphism $ \rr_{A,\sigma} : K_0(A) \to H^0(\Spec Z(A_\sigma),\Z)$.

\begin{conjecture}[Burns-Flach]\label{conj3}
Let $n > \frac{i}{2}+1$ be an integer. For any embedding $\sigma : E \hookrightarrow \CC$, we have
\begin{equation}\label{conj3 eq}
\ord_{s=1-n} L({}_{A^{\op}} H^{2d-i}(M^*),s)^\sigma = \rr_{A,\sigma} \bigl(H^{i+1}_{\mathcal{M}/\OK}(M,E(n))\bigr).
\end{equation}
Furthermore, let $L^* \in Z(A_\R)^\times$ denote the leading term of the Taylor expansion of the $L$-series $L({}_{A^{\op}} H^{2d-i}(M^*),s)$ at $s=1-n$, defined componentwise. Then we have
\begin{equation}\label{conj3 eq2}
\hat{\delta}(L^*) = \vartheta'_\infty(M,i,n).
\end{equation}
\end{conjecture}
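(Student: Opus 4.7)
The plan is to derive Conjecture \ref{conj3} from Conjecture \ref{conj2} by invoking the (conjectural) meromorphic continuation and functional equation of the equivariant $L$-function; no new geometric input should be needed, only bookkeeping with archimedean factors. I would first write the expected functional equation in the form
\begin{equation*}
\Lambda({}_A H^i(M),s) = \epsilon_A \cdot \Lambda({}_{A^{\op}} H^{2d-i}(M^*),1-s),
\end{equation*}
where $\Lambda$ denotes the completion of $L$ by an equivariant archimedean factor $\Gamma_\infty(s) \in Z(A_\CC)^\times$ defined componentwise from the Hodge decomposition of the realizations, and $\epsilon_A \in Z(A_\R)^\times$.

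For the order of vanishing (\ref{conj3 eq}): since $L({}_A H^i(M),s)$ converges absolutely and is nonzero at $s=n$, the order at $s=1-n$ of $L({}_{A^{\op}} H^{2d-i}(M^*),s)^\sigma$ coincides with the pole order at $s=1-n$ of the corresponding archimedean factor. A standard Hodge-theoretic computation expresses this pole order, componentwise over $\Spec Z(A_\sigma)$, as $\dim_{E_\sigma} ((H^i_\dR(M)/\Fil^n) \otimes_\Q \R)^\sigma - \dim_{E_\sigma} (H^i_B(M(n))^+ \otimes_\Q \R)^\sigma$; by the exact sequence (\ref{HD 1}) this equals $\dim_{E_\sigma} H^{i+1}_{\mathcal{D}}(M,E_\R(n))^\sigma$, and Conjecture \ref{conj1} then identifies it with $\rr_{A,\sigma}(H^{i+1}_{\mathcal{M}/\OK}(M,E(n)))$.

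For the leading-term identity (\ref{conj3 eq2}): the decomposition (\ref{HdR HB}) provides a canonical transformation between the two presentations (\ref{HD 1}) and (\ref{HD 2}) of Deligne cohomology, and running it through $\hat{\delta}$ expresses the difference $\vartheta'_\infty(M,i,n) - \vartheta_\infty(M,i,n)$ as $\hat{\delta}(c_\infty)$ for a correction term $c_\infty \in Z(A_\R)^\times$ built from the comparison isomorphism $I_\infty$ and the Deligne period $\alpha$. The crux is to identify $c_\infty$, modulo $Z(A)^\times$ (which $\hat{\delta}$ kills by Lemma \ref{lem K1 cartesien}), with the ratio of equivariant $\Gamma$-factors $\Gamma_\infty({}_A H^i(M),n)/\Gamma_\infty({}_{A^{\op}} H^{2d-i}(M^*),1-n)$ appearing in the functional equation. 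Granting this identification, combining the functional equation with (\ref{conj2 eq}) directly yields (\ref{conj3 eq2}).

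The main obstacle is not the strategy itself, which is classical in the non-equivariant case, but rather constructing the equivariant archimedean factor $\Gamma_\infty$ in a way truly compatible with the $A_\R$-module structure, and carrying out the identification of $c_\infty$ componentwise. In practice I would reduce, as in Remark \ref{rmk E0} and the proofs of Lemmas \ref{lem nr} and \ref{lem K1 cartesien}, to the case $A = M_n(D)$ for a division $E$-algebra $D$ with $E$ a number field, and handle each simple factor of $A_\R$ separately; the vanishing of $\hat\delta$ on $Z(A)^\times$ then absorbs all the ambiguities inherent to the choice of $\Gamma_\infty$ and $\epsilon_A$.
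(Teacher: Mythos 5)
You have set out to ``prove'' a statement that the paper itself presents only as a conjecture: Conjecture \ref{conj3} is part of the Burns--Flach conjectural framework, and the paper offers no proof of it --- it merely observes that, \emph{assuming} the meromorphic continuation of the equivariant $L$-function, Conjecture \ref{conj2} ``may be reformulated'' in this shape. Your argument cannot therefore be accepted as a proof of the statement: every input you use --- the meromorphic continuation, the existence and the precise shape of the equivariant functional equation with its archimedean factor $\Gamma_\infty$ and constant $\epsilon_A$, and Conjecture \ref{conj2} itself --- is conjectural. What you have written is, at best, a conditional equivalence (Conjecture \ref{conj2} plus the expected analytic properties implies Conjecture \ref{conj3}), which is indeed the intended relationship between the two conjectures, but it should be labelled as such rather than as a proof.

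Even read as a conditional derivation, the step you yourself call the crux is a genuine gap rather than bookkeeping. Comparing the two $A$-structures on $H^{i+1}_{\mathcal D}(M,E_\R(n))$ coming from (\ref{HD 1}) and (\ref{HD 2}) does produce a correction class $\hat{\delta}(c_\infty)$ measuring $\vartheta'_\infty-\vartheta_\infty$, but to identify it with the $L$-value side you need the \emph{entire} functional equation at $s=n$, not just the $\Gamma$-factor ratio: the equivariant $\epsilon$-constant and the conductor/period term must be shown to lie in $Z(A)^\times$ (or at least to vanish under $\hat{\delta}$, cf.\ Lemma \ref{lem K1 cartesien}). In the classical case $A=E=\Q$ this is the familiar rationality of the $\epsilon$-constant up to powers of $2\pi i$; in the equivariant setting it is an additional conjecture, and it is precisely where Burns and Flach locate the content of the compatibility of their conjecture with the functional equation. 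Your closing remark that the vanishing of $\hat{\delta}$ on $Z(A)^\times$ ``absorbs all the ambiguities'' assumes exactly this point. The order-of-vanishing part of your sketch is fine as a heuristic --- the pole order of the archimedean factor computed from the Hodge decomposition matches the $A_\R$-reduced rank of $H^{i+1}_{\mathcal D}(M,E_\R(n))$, and Conjecture \ref{conj1} (or the weak form with a subspace $W$) converts this into $\rr_{A,\sigma}$ of motivic cohomology --- but it too is conditional, and the reduced rank must be tracked componentwise over the simple factors of $A_\sigma$, not as a single dimension count.
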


\begin{remark}
The reduced rank in (\ref{conj3 eq}) depends only on the realization of $M$ in Deligne cohomology if we assume Conjecture \ref{conj1}.
\end{remark}

In general these conjectures are out of reach as we cannot prove that the motivic cohomology groups are finite-dimensional. Therefore, one often uses the following weakened conjecture.

\begin{conjecture}\label{conj4}
There exists an $A$-submodule $W$ of $H^{i+1}_{\mathcal{M}/\OK}(M,E(n))$ such that
\begin{equation}\label{rBW}
r_\BB(W) \otimes_\Q \R \cong H^{i+1}_\mathcal{D}(M,E_\R(n)).
\end{equation}
Furthermore, let $\vartheta_\infty(W)$ (resp. $\vartheta'_\infty(W)$) be the element of $K_0(A,\R)$ arising from $r_\BB(W)$ by means of the exact sequence (\ref{HD 1}) (resp. (\ref{HD 2})). Then we have the equalities
\begin{align}
\hat{\delta}(L({}_A H^i(M),n)) & = \vartheta_\infty(W)\\
\hat{\delta}(L^*) & = \vartheta'_\infty(W),
\end{align}
where $L^* \in Z(A_\R)^\times$ is defined as in Conjecture \ref{conj3}.
\end{conjecture}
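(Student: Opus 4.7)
The plan is to establish Conjecture \ref{conj4} for the motives $M = H^1(X/K)$ with coefficients in a suitable semisimple subalgebra of $\End^0_K(X)$ arising from the hypotheses of Theorem \ref{main thm}; for an arbitrary motive and arbitrary semisimple coefficient algebra the conjecture is of course open, but in the modular setting one can import the geometry of $X_1(N)$.

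The first step is a Hecke-equivariant motivic factorization. Using the strong modularity of $X$ together with the sharpening of Ribet's theorem announced in \S\ref{modular endo}, the goal is to realize $X$ as a Hecke-cut summand of $J_1(N)$ (after base change to an appropriate abelian extension of $\Q$), producing an idempotent $e$ in an enlarged Hecke algebra inside $\End(h^1(J_1(N)))$ such that $e \cdot h^1(J_1(N)) \cong H^1(X/K)$ in $\CHM_K(A)$. The essential automorphic input is that every endomorphism of $X$ defined over an abelian extension of $\Q$ is of Hecke origin; without this one cannot propagate the $A$-action across the modular parametrization.

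Next I would feed in Beilinson's theorem for $X_1(N)$: Beilinson constructs, via Eisenstein symbols and products of modular units, explicit integral elements in $H^2_{\mathcal{M}/\Z}(X_1(N),\Q(n))$ whose images under $r_\BB$ span the Deligne cohomology of each newform isotypic component and satisfy the classical $L$-value identity for $L^*(f,n)$. Applying $e$ to these Beilinson classes produces a candidate $A$-submodule $W \subset H^{i+1}_{\mathcal{M}/\OK}(M,E(n))$, and Hecke-equivariance of $r_\BB$ then transports the surjectivity statement \eqref{rBW} from $J_1(N)$ to $M$.

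The main obstacle is upgrading the classical, newform-by-newform $L$-value identities to the equivariant identity in $K_0(A,\R)$. Since $A$ may be non-commutative (a product of matrix algebras over division algebras over $E$), $\hat{\delta}$ detects classes only modulo $\nr(K_1(A))$, so one must verify the required identity modulo reduced norms rather than merely modulo $E^\times$. The strategy is to apply Lemma \ref{lem K1 cartesien}, reducing the comparison to $Z(A_\R)^\times / Z(A)^\times$, and then to check componentwise after an embedding $E \hookrightarrow \CC$: on each simple factor of $A_\CC$ the embedding singles out a single newform factor, and the required equality reduces to Beilinson's classical statement. What makes this work is that the $A$-action on Beilinson's elements, obtained from the Hecke algebra via the idempotent $e$, is compatible with the $A$-action on both the Deligne realization and the equivariant $L$-function; this compatibility is precisely what is secured by the refinement in \S\ref{modular endo}.
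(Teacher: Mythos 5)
First, note that the statement you are addressing is Conjecture \ref{conj4} itself, which the paper never proves (nor claims to prove) in general: it is the weak form of the equivariant Beilinson conjecture, and the paper establishes it only for specific motives, namely $H^1(\overline{M}_K)$ with its $\TT_K$-action (Theorem \ref{beilinson equiv}) and $H^1$ of strongly modular abelian varieties (Theorems \ref{main thm 1}, \ref{main thm 2}, Corollary \ref{cor 1}). Your restriction to the modular setting is therefore the right reading, and the skeleton of your argument --- cut $H^1(A_f/F)$ out of $H^1(J_1(N)_F)$ by a Hecke idempotent (Proposition \ref{pro ef} together with Theorem \ref{thm EndAf}, i.e.\ the automorphic refinement of Ribet in \S\ref{modular endo}), feed in Beilinson's subspace $W_n$ with its integrality (Schappacher--Scholl for $n=2$), and transport everything by Hecke-equivariance of $r_\BB$ --- is exactly the paper's route, formalized through Propositions \ref{func1}, \ref{func2}, \ref{func3}. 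For a general strongly modular abelian variety you would still need the passage through $B_{f,F}$, i.e.\ Theorems \ref{strongly modular} and \ref{main thm 2}; your sketch leaves this implicit, but it is routine given the functoriality statements.

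The genuine gap is in your final step, the equivariant upgrade. Verifying the identity ``componentwise after an embedding $E \hookrightarrow \CC$'' and reducing to ``Beilinson's classical statement'' does not yield an equality in $K_0(A,\R) \cong Z(A_\R)^\times/Z(A)^\times$: knowing, for each $\sigma$ separately, that the $\sigma$-component of the $L$-value agrees with the corresponding regulator determinant up to an algebraic number only gives the identity modulo $\Qb^\times$ independently in each component, whereas the conjecture requires a single unit of $Z(A)^\times$ coherently across all components --- this coherence is precisely what makes the equivariant statement stronger than the classical one. The paper's mechanism is different: one extends coefficients to $\Qb$ (not $\CC$), where $\TT_K \otimes \Qb \cong \prod_{\pi} \End_{\Qb}(\Omega(\pi)^K)$ is a product of matrix algebras over $\Qb$, so no division-algebra or reduced-norm subtleties occur on the simple factors, and Beilinson's theorem in its $\Qb$-rational form --- $R(\pi) = L'(\pi,2-n)\cdot H(\pi)$ as an equality of $\Qb$-structures inside $H(\pi)\otimes\R$ --- is literally the statement $\vartheta_\infty(W_n(\pi)) = \hat{\delta}(L'(\pi,2-n))$ in $K_0(A_\pi,\R)$ with $A_\pi = \End_{\Qb}(\Omega(\pi)^K)$. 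The whole burden of descending back to $E$-coefficients is then carried by the injectivity of $K_0(A,\R) \to K_0(A \otimes_E \Qb,\R)$, which is the content of Proposition \ref{func1} and whose proof uses the explicit descriptions (\ref{nr}) and (\ref{nrR}) of the images of the reduced norm maps; Lemma \ref{lem K1 cartesien} alone, which is what you invoke, does not give this injectivity. If you replace ``check after an embedding into $\CC$'' by ``extend scalars to $\Qb$ and apply Proposition \ref{func1}'', your argument matches the paper's proof of its main results.
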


\begin{remark}
We may ask for a property which is stronger than (\ref{rBW}), namely that $r_\BB$ induces an isomorphism $W \otimes_\Q \R \xrightarrow{\cong} H^{i+1}_\mathcal{D}(M,E_\R(n))$.
\end{remark}

Finally, let us spell out the conjecture in the particular case of abelian varieties. Let $B$ be an abelian variety defined over a number field $K$. Consider the motive with $\Q$-coefficients $M=H^1(B)=(B,p_1,0)$, where $p_1$ the Chow-Künneth projector in degree 1. The usual $L$-function of $B/K$ is given by $L(B,s)=L(H^1(B),s)$. Let $A=\End_K(B) \otimes \Q$. The semisimple algebra $A^{\textrm{op}}$ acts on $M$, and we denote by $L({}_A B,s)=L({}_{A^{\textrm{op}}} H^1(B),s)$ the associated equivariant $L$-function. It converges for $\Re(s)>\frac32$ and takes values in $Z(A_\CC)$. Let $B^\vee$ be the dual abelian variety of $B$. The Poincaré bundle on $B \times B^\vee$ induces a canonical isomorphism $M^* \cong H^1(B^\vee)(1)$ in $\CHM_K(A)$. The (conjectural) functional equation thus relates $L({}_A B,s)$ and $L({}_{A^{\op}} B^\vee,2-s)$. Let $n \geqslant 2$ be an integer. We have isomorphisms
\begin{equation*}
H^2_{\mathcal{D}}(M,\R(n))=H^2_{\mathcal{D}}(B_\R,\R(n)) \cong \frac{H^1_\dR(B) \otimes \R}{H^1_B(B(\CC),\R(n))^+} \cong H^1_B(B(\CC),\R(n-1))^+.
\end{equation*}
Let $B \sim \prod_{i=1}^r B_i^{e_i}$ be the decomposition of $B$ into $K$-simple factors up to isogeny, and let $D_i=\End_K(B_i) \otimes \Q$. We have $A \cong \prod_{i=1}^r M_{e_i}(D_i)$ so that $Z(A) = \prod_{i=1}^r Z(D_i)$. The reduced rank of $H^1_B(B(\CC),\Q(n-1))^+$ over $A$ is the function $i \mapsto \dim(B_i)$.

\begin{conjecture}
There exists an $A^{\mathrm{op}}$-submodule $W$ of $H^2_{\mathcal{M}/\OK}(B,\Q(n))$ such that
\begin{equation}\label{rBW ab}
r_\BB(W) \otimes_\Q \R \cong H^2_\mathcal{D}(B_\R,\R(n)).
\end{equation}

Furthermore, let $\vartheta_\infty(W)$ be the element of $K_0(A^{\mathrm{op}},\R)$ arising from the exact sequence
\begin{equation}
0 \to H^1_B(B(\CC),\Q(n))^+ \otimes \R \xrightarrow{\alpha} H^1_\dR(B) \otimes \R \to r_\BB(W) \otimes \R \to 0,
\end{equation}
and let $\vartheta'_\infty(W)$ be the element of $K_0(A^{\mathrm{op}},\R)$ arising from the isomorphism
\begin{equation}
r_\BB(W) \otimes \R \xrightarrow{\cong} H^1_B(B(\CC),\Q(n-1))^+ \otimes \R.
\end{equation}
Then we have
\begin{align}
\hat{\delta}(L({}_A B,n)) & = \vartheta_\infty(W)\\
\hat{\delta}(L^*) & = \vartheta'_\infty(W),
\end{align}
where $L^* \in Z(A_\R)^\times$ denotes the leading term of the Taylor expansion of $L({}_{A^{\op}} B^\vee,s)$ at $s=2-n$.\end{conjecture}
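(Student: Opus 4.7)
The plan is to establish this conjecture under the strongly modular hypothesis of Theorem \ref{main thm}, so that $L(B/K,s)$ factors as a product of newform $L$-functions $L(f_i,s)$. By Ribet's theorem together with the proof of Serre's conjecture, every simple factor of $B$ over $\Qb$ is isogenous to some $A_f$, and $B$ arises as a quotient of $J_1(N)_{\Qb}$. The first step is therefore to realise the motive $H^1(B)$ over $K$, together with its action of $A = \End_K(B)\otimes\Q$, as a direct summand of $H^1(J_1(N))$ cut out by Hecke correspondences. Since the modular parametrisation is \emph{a priori} only available over $\Qb$, I would work over a suitable abelian extension $F/\Q$ and invoke the automorphic refinement of Ribet's theorem promised in the introduction, which guarantees that every endomorphism of $A_f$ defined over an abelian extension of $\Q$ comes from the Hecke algebra. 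The equivariant descent from $F$ down to $K$ is then a bookkeeping exercise with idempotents in $\CHM_F(A)$ together with their Galois-theoretic compatibilities.

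The second step is to import Beilinson's theorem on $X_1(N)$. That theorem supplies explicit Beilinson--Deninger--Scholl elements in $H^2_{\mathcal{M}/\OK}(X_1(N),\Q(n))$ whose regulators span the Deligne cohomology attached to weight-two cusp forms and recover the $L$-values $L(f,n)$. Applying the Hecke idempotent that cuts out $B$ produces an $A$-submodule $W \subset H^2_{\mathcal{M}/\OK}(B,\Q(n))$ with $r_\BB(W)\otimes_\Q\R \cong H^2_\mathcal{D}(B_\R,\R(n))$, that is, property (\ref{rBW ab}). Equivariance is automatic because the Beilinson elements, the regulator map, and the idempotent are all Hecke-equivariant.

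The final step is the identification in $K_0(A,\R)$. Using the decomposition $A\cong\prod_i M_{e_i}(D_i)$ one sees that both $\hat\delta(L({}_A B,n))$ and $\vartheta_\infty(W)$ factor through the simple summands of $A$; Morita equivalence reduces each summand to the division algebra $D_i$, and Lemmas \ref{lem K0AR}, \ref{lem nr}, and \ref{lem K1 cartesien} reduce the required identity to comparing determinants of the regulator with $L$-values after applying each embedding $\sigma : Z(A)\hookrightarrow\CC$. This comparison is precisely Beilinson's classical theorem for the newform associated to that embedding. The variant $\hat\delta(L^*) = \vartheta'_\infty(W)$ follows by the same route using (\ref{HD 2}) and the functional equation relating $L({}_A B,n)$ to $L({}_{A^{\op}} B^\vee,2-n)$.

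The principal obstacle is the first step: one must match the $A$-action on $H^1(B)$ with the Hecke action on a $\Qb$-quotient of $J_1(N)$, and simultaneously descend all data from $F$ down to $K$ so that the motivic classes live in $H^2_{\mathcal{M}/\OK}(B,\Q(n))$ and genuinely carry the full $A$-structure. Once this modular parametrisation with coefficients is in place, the regulator computation distributes over the simple factors of $A$ and reduces to Beilinson's theorem combined with the $K$-theoretic formalism of $K_0(A,\R)$ developed above.
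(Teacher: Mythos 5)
Your plan reproduces the paper's own treatment of this conjecture (which is only established in the strongly modular case): cut $H^1(B)$ out of $H^1(J_1(N))$ over a suitable abelian extension by a Hecke idempotent, using the fact that endomorphisms over abelian extensions come from the Hecke algebra (Proposition \ref{pro ef}, Theorem \ref{thm EndAf}), feed in Beilinson's theorem on modular curves (Theorem \ref{beilinson equiv}), and transport the resulting identity through $K_0(A,\R)$ via the functoriality statements, Propositions \ref{func1}, \ref{func2} and \ref{func3} (Theorems \ref{main thm 1}, \ref{main thm 2}, Corollary \ref{cor 1}). The one point to handle carefully in your final step is that per-embedding determinant comparisons alone only pin down the class up to an element of $\prod_\sigma \CC^\times$ with rational components rather than up to $Z(A)^\times$, so one must invoke Beilinson's theorem in its Hecke-equivariant $\Qb$-coefficient form $R(\pi)=L'(\pi,2-n)\cdot H(\pi)$, exactly as the paper does.
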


\subsection{Base changes of Chow motives}

If $R$ is any ring and $G$ is any group acting on $R$ by ring automorphisms, the \emph{twisted group ring} $R\{G\}$ is the free $R$-module with basis $G$, endowed with the product
\begin{equation*}
\bigl(\sum_{\sigma \in G} a_\sigma \cdot \sigma\bigr) \bigl(\sum_{\tau \in G} b_\tau \cdot \tau\bigr) = \sum_{\sigma, \tau \in G} a_\sigma \sigma(b_\tau) \cdot \sigma \tau.
\end{equation*}

Let $L/K$ be a Galois extension of number fields, with Galois group $G$. There is a canonical base change functor $\CHM_K(E) \to \CHM_L(E)$ sending a Chow motive $M=(X,p,n)$ to $M_L=(X_L,p_L,n)$. In particular, we have a canonical morphism of $E$-algebras $\End_K(M) \to \End_L(M_L)$. Note that $M_L$ is a Chow motive over $L$, but we may also consider it as a Chow motive over $K$.

\begin{lem}\label{EndK ML}
For every $M \in \CHM_K(E)$, there is a canonical isomorphism $\End_K(M_L) \cong \End_L(M_L)\{G\}$.
\end{lem}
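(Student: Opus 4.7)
The plan is to exploit the Galois decomposition of $X_L \times_K X_L$ as a disjoint union of $L$-fibre products indexed by $G$, derive from it an additive decomposition of $\End_K(M_L)$ indexed by $G$, and then check that composition of correspondences over $K$ matches the twisted multiplication of $\End_L(M_L)\{G\}$. Writing $M=(X,p,n)$ so $M_L=(X_L,p_L,n)$, the two endomorphism algebras are built respectively from $\CH^d(X_L\times_K X_L)\otimes E$ and $\CH^d(X_L\times_L X_L)\otimes E$ by sandwiching between two copies of $p_L$, so everything reduces to comparing these two correspondence groups. The $G$-action on $\End_L(M_L)$ appearing in the twisted group ring comes from the action of $G$ on $X_L$ by $K$-automorphisms, which preserves $p_L$ since $p_L$ is pulled back from $X\times_K X$.

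Since $L/K$ is Galois, the canonical map $\coprod_{\sigma\in G}\Spec L \to \Spec L\times_K\Spec L$, whose $\sigma$-th component corresponds to the $K$-algebra map $L\otimes_K L\to L$, $a\otimes b\mapsto a\sigma(b)$, is an isomorphism. Base-changing along $X\times_K X \to \Spec K$ yields a canonical isomorphism
\begin{equation*}
X_L\times_K X_L \;\cong\; \coprod_{\sigma \in G} X_L\times_L X_L
\end{equation*}
of $K$-schemes, where the $\sigma$-component embeds by means of the $K$-automorphism $\sigma_X\in \Aut_K(X_L)$ induced by $\sigma$. Taking codimension-$d$ Chow groups, tensoring with $E$, and sandwiching between $p_L$ on each side---legitimate because $p_L$ is Galois-stable---gives the additive decomposition
\begin{equation*}
\End_K(M_L) \;\cong\; \bigoplus_{\sigma \in G}\End_L(M_L)\cdot \sigma,
\end{equation*}
where the basis element $\sigma$ on the right represents the graph correspondence $[\Gamma_{\sigma_X}]\in\CH^d(X_L\times_K X_L)$.

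It then remains to identify the ring structure. I would define $\Phi : \End_L(M_L)\{G\} \to \End_K(M_L)$ by $f\cdot\sigma\mapsto f\circ[\Gamma_{\sigma_X}]$ and deduce from the previous step that $\Phi$ is an additive bijection. Multiplicativity reduces, via the graph identity $\Gamma_{\sigma_X}\circ\Gamma_{\tau_X}=\Gamma_{(\sigma\tau)_X}$, to the conjugation formula
\begin{equation*}
[\Gamma_{\sigma_X}]\circ g \;=\; \sigma(g)\circ [\Gamma_{\sigma_X}] \qquad (g\in\End_L(M_L)),
\end{equation*}
which asserts that conjugation by $\Gamma_{\sigma_X}$ on correspondences over $K$ realizes the natural Galois action $g \mapsto (\sigma_X\times\sigma_X)_* g$ on $\End_L(M_L)$. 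Granting this one computes $\Phi(f\cdot\sigma)\circ\Phi(g\cdot\tau)=f\circ\sigma(g)\circ[\Gamma_{(\sigma\tau)_X}]=\Phi((f\cdot\sigma)(g\cdot\tau))$, so $\Phi$ is the desired ring isomorphism.

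The main obstacle is this last conjugation identity. Morally it is the familiar statement that conjugation by a Galois element realizes the Galois action, but checking it rigorously requires tracking the decomposition of the triple fibre product $X_L\times_K X_L\times_K X_L$ along $G\times G$ and matching the pieces of the correspondence composition term by term. I would do this either by reducing to graph correspondences via $\Gamma_\phi\circ\Gamma_\psi=\Gamma_{\phi\circ\psi}$ after expressing $g$ as a $\Q$-linear combination of classes of algebraic cycles, or by a direct push-pull computation on the triple product using compatibility of pushforward with the Galois decomposition. Everything else---well-definedness, additivity, and Galois-stability of $p_L$---is routine.
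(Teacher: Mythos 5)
Your proof is correct and follows essentially the same route as the paper: decompose $X_L\times_K X_L$ into $G$-indexed copies of $X_L\times_L X_L$, let the graphs of the Galois automorphisms $\sigma_X$ furnish the basis elements, and use that $p_L$ is Galois-stable to pass from the total motive to $M_L$. The conjugation identity you flag as the main obstacle is precisely what the paper also leaves implicit (it only verifies $\Gamma_\sigma\Gamma_\tau=\Gamma_{\sigma\tau}$ and then asserts the twisted group-ring structure), so your level of detail matches the published argument.
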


\begin{proof}
For any smooth projective $L$-variety $Y$, we have
\begin{equation*}
Y \times_K Y \cong \bigsqcup_{\sigma \in G} Y \times_L Y^\sigma
\end{equation*}
where $Y^\sigma/L$ denotes the conjugate variety. Thus we have an isomorphism of abelian groups
\begin{equation}\label{CHYY}
\CH(Y \times_K Y) \cong \bigoplus_{\sigma \in G} \CH(Y \times_L Y^\sigma).
\end{equation}
Now if $Y=X_L$ is the base change of a smooth projective $K$-variety $X$, then $Y^\sigma=Y$ so that $\CH(Y \times_K Y)$ is the direct sum of copies of $\CH(Y \times_L Y)$. The ring structure can be described as follows. For any $\sigma \in G$, let $\phi_\sigma : X_L \to X_L$ denote the $K$-automorphism induced by $\sigma$, and let $\Gamma_\sigma = \phi_\sigma^* \subset X_L \times_K X_L$ denote the transpose of the graph of $\phi_\sigma$. We have $\Gamma_\sigma \Gamma_\tau = \phi_\sigma^* \phi_\tau^* = (\phi_\tau \phi_\sigma)^* = \phi_{\sigma \tau}^* = \Gamma_{\sigma \tau}$, so we get a group morphism $\Gamma : G \to \Aut_K(H(X_L))$ where $H(X_L)$ is the total motive of $X_L$. By (\ref{CHYY}), we get
\begin{equation*}
\End_K(H(X_L)) = \bigoplus_{\sigma \in G} \End_L(H(X_L)) \cdot \Gamma_\sigma = \End_L(H(X_L)) \{G\}.
\end{equation*}
For an arbitrary $M=(X,p,n) \in \CHM_K(E)$, the idempotent $p_L \in \End_L(H(X_L)) \subset \End_K(H(X_L))$ commutes with the action of $G$, so that we get a corresponding decomposition
\begin{equation*}
\End_K(M_L) = p_L \End_K(H(X_L)) p_L = \bigl(p_L \End_L(H(X_L)) p_L\bigr) \{G\} = \End_L(M_L)\{G\}.
\end{equation*}
\end{proof}

We will also need the following lemma from non-commutative algebra.

\begin{lem}
If $A$ is a semisimple $\Q$-algebra and $G$ is a finite group acting on $A$ by $\Q$-automorphisms, then $A\{G\}$ is semisimple.
\end{lem}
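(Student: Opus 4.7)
This is a Maschke-type averaging argument. The plan is to show directly that every left $A\{G\}$-module is semisimple, which is equivalent to $A\{G\}$ being a semisimple ring.

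Let $M$ be a left $A\{G\}$-module and $N \subseteq M$ an $A\{G\}$-submodule. Since $A$ is semisimple, $N$ is an $A$-direct summand of $M$, so there exists an $A$-linear projection $\pi : M \to N$ with $\pi|_N = \id_N$. Because $\Q \subseteq A$ and $G$ is finite, $|G|$ is invertible in $A$, which allows us to define
\begin{equation*}
\tilde{\pi}(m) = \frac{1}{|G|} \sum_{\sigma \in G} \sigma \cdot \pi(\sigma^{-1} \cdot m).
\end{equation*}
The map $\tilde\pi$ lands in $N$ because $N$ is $G$-stable, and it restricts to $\id_N$ for the same reason.

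The heart of the argument is to verify that $\tilde{\pi}$ is $A\{G\}$-linear. For $A$-linearity, I would unwind the twisted multiplication rule $\sigma \cdot a = \sigma(a) \cdot \sigma$ of Lemma \ref{EndK ML}, which gives $\sigma^{-1} \cdot (a \cdot m) = \sigma^{-1}(a) \cdot (\sigma^{-1} \cdot m)$; applying the $A$-linearity of $\pi$ and then pushing $\sigma$ back across using the same rule, the interior $\sigma^{-1}(a)$ gets multiplied by $\sigma$ to recover $a$, yielding $\tilde\pi(a \cdot m) = a \cdot \tilde\pi(m)$. For $G$-equivariance, I would substitute $\rho = \sigma^{-1}\tau$ in the sum defining $\tilde\pi(\tau \cdot m)$; the invariance of counting measure on $G$ under translation produces $\tilde\pi(\tau \cdot m) = \tau \cdot \tilde\pi(m)$.

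Thus $\tilde\pi$ is an $A\{G\}$-linear projection of $M$ onto $N$, so $N$ is a direct summand of $M$ in $A\{G\}$-modules. Since every submodule of every $A\{G\}$-module splits off, every $A\{G\}$-module is semisimple, and hence $A\{G\}$ is a semisimple ring. The only subtlety is the bookkeeping with the twisted action $\sigma \cdot a = \sigma(a) \cdot \sigma$; beyond that, the argument is the classical proof of Maschke's theorem.
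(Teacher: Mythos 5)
Your proof is correct and is essentially the same argument as the paper's: both average an $A$-linear projection $p : M \to N$ over $G$ via $p' = \frac{1}{|G|}\sum_{\sigma \in G} \sigma p \sigma^{-1}$ (using that $|G|$ is invertible in a $\Q$-algebra) and check that the averaged map is $A\{G\}$-linear, the only wrinkle being the twisted rule $\sigma \cdot a = \sigma(a)\cdot\sigma$. Your write-up just spells out the $A$-linearity and $G$-equivariance bookkeeping that the paper leaves as "easy to check."
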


\begin{proof}
Let $M$ be an arbitrary $A\{G\}$-module. Let us show that every submodule $N$ of $M$ is a direct factor. Since $A$ is semisimple, there exists an $A$-linear map $p : M \to N$ such that $p(x)=x$ for all $x \in N$. Define $p' : M \to N$ by
\begin{equation*}
p' = \frac{1}{|G|} \sum_{\sigma \in G} \sigma p \sigma^{-1}.
\end{equation*}
It is easy to check that $p'$ is $A$-linear and commutes with the action of $G$, so that $p'$ is $A\{G\}$-linear. Moreover $p'(x)=x$ for all $x \in N$, so that $N$ is a direct factor of $M$.
\end{proof}

Let $B$ be an abelian variety defined over $K$, and let $B_L = B \times_{\Spec K} \Spec L$ be its base change to $L$. Let $A=\End_L(B_L) \otimes \Q$ be the algebra of endomorphisms of $B$ defined over $L$. By Lemma \ref{EndK ML}, we have an isomorphism $\End_K(H^1(B_L)) \cong A^{\mathrm{op}} \{G\}$. Note that $A$ and $G$ commute if and only if all endomorphisms of $B_L$ are defined over $K$. We may consider the equivariant $L$-function $L({}_{A\{G\}} B_L,s)$ and formulate a conjecture on the values $L({}_{A\{G\}} B_L,n)$, $n \geqslant 2$ as in \S\ref{conj statement}. Note that this conjecture specializes to a conjecture on all Artin-twisted $L$-values $L(B \otimes \rho,n)$ for any finite-dimensional complex representation $\rho$ of $G$ and any integer $n \geqslant 2$.

\subsection{Functoriality}

In this section we recall functoriality results for the equivariant Beilinson conjecture. Note that all compatibility results below are studied and proved by Burns and Flach in the more general setting of the equivariant Tamagawa number conjecture \cite{burns-flach}. In the following results, the « equivariant Beilinson conjecture » means any of the Conjectures \ref{conj1}, \ref{conj2}, \ref{conj3}, \ref{conj4}.

As a first step, the equivariant Beilinson conjecture is clearly compatible with taking direct sums of Chow motives. We next study the behaviour of the conjecture under change of coefficients.

\begin{pro}\label{func1}
Let $E,E'$ be number fields with $E \subset E'$. Let $A$ be a finite-dimensional semisimple $E$-algebra, and let $A'=A \otimes_E E'$. Let $M=(X,p,0,\rho) \in \CHM_K(A)$ be a Chow motive, and let $M'=M \otimes_E E' \in \CHM_K(A')$. Let $i,n$ be integers such that $0 \leqslant i \leqslant 2\dim X$ and $n > \frac{i}{2}+1$. Then the equivariant Beilinson conjecture holds for $L({}_A H^i(M),n)$ if and only if it holds for $L({}_{A'} H^i(M'),n)$.
\end{pro}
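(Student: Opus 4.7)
The strategy is to establish naturality of all data entering the equivariant Beilinson conjectures under the scalar extension $-\otimes_E E'$, and then combine this with injectivity of the induced map $K_0(A,\R) \to K_0(A',\R)$. By Remark \ref{rmk E0}, I may assume $E/\Q$ finite; writing $E'$ as the direct limit of its finite subextensions over $E$, and using that all motivic and $K$-theoretic data commute with filtered colimits (admissible submodules being finitely generated, they descend to a finite stage), I reduce to the case $E'/E$ finite.

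For $E'/E$ finite, the base change $M' = M \otimes_E E' \in \CHM_K(A')$ has all its realizations---Betti, de Rham, étale, motivic, and Deligne---obtained from those of $M$ by tensoring with $E'$ over $E$, and the Beilinson regulator, Deligne period map, and the exact sequences (\ref{HD 1})--(\ref{HD 2}) for $M'$ are the corresponding scalar extensions. Faithful flatness of $E'/E$ immediately gives the equivalence for Conjecture \ref{conj1}, and the elements $\vartheta_\infty(M',i,n), \vartheta'_\infty(M',i,n) \in K_0(A',\R)$ are the images of $\vartheta_\infty(M,i,n), \vartheta'_\infty(M,i,n)$ under the natural map. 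On the $L$-side, for any embedding $\sigma' : E' \hookrightarrow \CC$, the $\sigma'$-component of $L({}_{A'} H^i(M'),s)$ equals the $\sigma'|_E$-component of $L({}_A H^i(M),s)$, so $L({}_{A'}H^i(M'),n)$ (and its leading term $L^*$) is the image of the corresponding $A$-quantity under $Z(A_\R)^\times \hookrightarrow Z(A'_\R)^\times$. The ``only if'' direction for all four conjectures follows.

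For the ``if'' direction in Conjectures \ref{conj2}--\ref{conj3}, I prove that the scalar extension $K_0(A,\R) \to K_0(A',\R)$ is injective. Using the exact sequence (\ref{K0AR short}), a class in the kernel has the form $\delta(y)$ with $y \in K_1(A_\R)$ whose image in $K_1(A'_\R)$ lies in $K_1(A')$. By Lemma \ref{lem K1 cartesien} and injectivity of the reduced norms, it suffices to check that $Z(A_\R) \cap Z(A') = Z(A)$ inside $Z(A'_\R) = Z(A_\R) \otimes_E E'$. Choosing a basis $\{1, e_2, \ldots, e_d\}$ of $E'$ over $E$ gives the decomposition $Z(A'_\R) = \bigoplus_i Z(A_\R) \cdot e_i$; the intersection of $Z(A_\R) \cdot 1$ with $\bigoplus_i Z(A) \cdot e_i = Z(A')$ is visibly $Z(A)$.

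For the weak form (Conjecture \ref{conj4}), given an admissible $W' \subset H^{i+1}_{\mathcal{M}/\OK}(M',E'(n))$, I extract a finitely generated $A$-submodule $W \subset H^{i+1}_{\mathcal{M}/\OK}(M,E(n))$ by expanding a finite generating set of $W'$ in a basis of $E'/E$; then $W \otimes_E E' \supseteq W'$, and faithful flatness gives $r_\BB(W) \otimes_\Q \R = H^{i+1}_\mathcal{D}(M,E_\R(n))$. Since $\vartheta_\infty(W)$ depends only on $r_\BB(W)$, and any two admissible submodules yield the same image---for $W'_1 \subseteq W'_2$ admissible, the $E'$-vector space $r_\BB(W'_2)/r_\BB(W'_1)$ vanishes after $\otimes_\Q \R$ hence vanishes, and the general case is handled via $W'_1 + W'_2$---we obtain $\vartheta_\infty(W \otimes_E E') = \vartheta_\infty(W')$, so the required equalities for $W$ follow by injectivity of $K_0(A,\R) \to K_0(A',\R)$. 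The main technical obstacle is the careful bookkeeping of scalar-extension compatibility for each cohomology theory and auxiliary structure involved, in particular for Scholl's subspace of integral elements in motivic cohomology.
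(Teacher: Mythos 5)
Your main line — compatibility of the equivariant $L$-value, the regulator data and the (extended) boundary map with $-\otimes_E E'$, followed by injectivity of $\iota\colon K_0(A,\R)\to K_0(A',\R)$ proved through reduced norms and the intersection $Z(A_\R)^\times\cap Z(A')^\times=Z(A)^\times$ — is exactly the paper's proof; the paper phrases the last step via the explicit descriptions (\ref{nr})--(\ref{nrR}) of the images of the reduced norms, which is the same content as your appeal to Lemma \ref{lem K1 cartesien}, and this part of your argument is correct.

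The final paragraph on Conjecture \ref{conj4}, however, contains a genuine error. The claim that any two admissible submodules yield the same class in $K_0(A,\R)$ is false: already for $A=E=\Q$, two lines $W_1=\Q x$, $W_2=\Q y$ in motivic cohomology with $r_\BB(x)=1$, $r_\BB(y)=\pi$ are both admissible, yet their classes in $K_0(\Q,\R)\cong\R^\times/\Q^\times$ differ by $\pi$. Your reduction to the nested case via $W'_1+W'_2$ does not repair this, because the sum of two admissible submodules need not be admissible (in the same example $r_\BB(W_1+W_2)=\Q+\Q\pi$, so the natural map to Deligne cohomology after $\otimes_\Q\R$ is surjective but not injective); only the nested case $W'_1\subseteq W'_2$ of your claim is correct. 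The same defect hits your descended module $W$: expanding generators of $W'$ in an $E$-basis of $E'$ only guarantees that $r_\BB(W)$ spans $H^{i+1}_{\mathcal{D}}(M,E_\R(n))$ over $\R$, not that $r_\BB(W)\otimes_\Q\R\to H^{i+1}_{\mathcal{D}}(M,E_\R(n))$ is bijective, so $W$ need not satisfy (\ref{rBW}) and $\vartheta_\infty(W)$ need not even be defined by the triple construction of \S 1.2. Concretely, with $E=\Q$, $E'=\Q(\sqrt{2})$, $r_\BB(x)=1$, $r_\BB(y)=\pi$, the module $W'=E'\cdot(x+\sqrt{2}\,y)$ is admissible for $M'$, but your $W=\Q x+\Q y$ is not admissible for $M$. (This $W'$ cannot satisfy the full hypothesis of Conjecture \ref{conj4}, since its class is not in the image of $\iota$; the point is that your argument never uses the class equality $\vartheta_\infty(W')=\hat{\delta}'(L')\in\mathrm{im}\,\iota$, and admissibility of $W'$ alone does not suffice.) Note that the paper does not attempt such a uniqueness lemma: its proof records only the compatibility of the regulator data with $\otimes_E E'$ and the injectivity of $\iota$, leaving the descent of $W$ implicit; a correct treatment of the ``if'' direction for Conjecture \ref{conj4} has to exploit that $\vartheta_\infty(W')$ comes from $Z(A_\R)^\times$ via $\hat{\delta}'$, not merely that some admissible $W'$ exists.
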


\begin{proof}
The equivariant $L$-function of $H^i(M')$ is the image of the equivariant $L$-function of $H^i(M)$ under the canonical map $Z(A_\CC) \to Z(A'_\CC)$. Moreover, the regulator map associated to $(M',i,n)$ is obtained from the regulator map associated to $(M,i,n)$ by tensoring with $E'$ over $E$. Since the extended boundary map is functorial, we are thus reduced to show that the canonical map $\iota : K_0(A,\R) \to K_0(A',\R)$ is injective. We may assume that $A$ is a central simple algebra over $E$. We have a commutative diagram
\begin{equation} \label{diag K0 rel}
\begin{tikzcd}
0 \rar & K_1(A) \rar \dar & K_1(A_\R) \rar{\delta} \dar & K_0(A,\R) \dar{\iota} \rar & 0 \\
0 \rar & K_1(A') \rar & K_1(A'_\R) \rar{\delta'} & K_0(A',\R) \rar & 0.
\end{tikzcd}
\end{equation}
We may identify all the $K_1$-groups with subgroups of $Z(A'_\R)^\times$. Let $x \in K_0(A,\R)$ be in the kernel of $\iota$, and let $z \in K_1(A_\R)$ such that $\delta(z)=x$. Since $Z(A_\R)^\times \cap Z(A')^\times = Z(A)^\times$, we have $\nr_\R(z) \in Z(A)^\times$. Looking at the conditions (\ref{nr}) and (\ref{nrR}) describing the image of the reduced norm maps, we see that $z$ comes from $K_1(A)$ and thus $x=0$.
\end{proof}

\begin{pro}\label{func2}
Let $E$ be a number field, and let $\rho : A \to B$ be a morphism beween finite-dimensional semisimple $E$-algebras. Let $M=(X,p,0) \in \CHM_K(B)$ be a Chow motive, and let $\rho^* M \in \CHM_K(A)$ be the motive obtained by restricting the action to $A$. Let $i,n$ be integers such that $0 \leqslant i \leqslant 2\dim X$ and $n > \frac{i}{2}+1$. Then the equivariant Beilinson conjecture for $L({}_B H^i(M),n)$ implies the equivariant Beilinson conjecture for $L({}_{A} H^i(\rho^* M),n)$. Moreover, if $\rho$ is surjective then the converse holds.
\end{pro}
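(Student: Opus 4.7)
The key observation is that $\rho^* M$ and $M$ share the same underlying Chow motive in $\CHM_K(E)$: only the algebra action changes through $\rho$. Consequently the realizations, comparison isomorphisms, the Deligne period map, the motivic and Deligne cohomology groups, and the regulator $r_\BB$ for $\rho^* M$ coincide as maps of abelian groups with those for $M$, and are merely reinterpreted as $A$-equivariant (via $\rho$) rather than $B$-equivariant. In particular Conjecture \ref{conj1} for $\rho^* M$ is literally the same statement as Conjecture \ref{conj1} for $M$, and the candidate subspaces $W$ of Conjecture \ref{conj4} are the same sets, now regarded as $A$-submodules.

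For the implication from the $B$-version to the $A$-version of Conjectures \ref{conj2}--\ref{conj4}, the plan is to transport both sides of the conjectural identity from $K_0(B,\R)$ to $K_0(A,\R)$ along restriction of scalars. Since $A$ is semisimple, every finitely generated $A$-module is projective, so restriction along $\rho$ yields natural homomorphisms $\rho^* : K_i(B) \to K_i(A)$ (for $i=0,1$) and $\rho^* : K_0(B,\R) \to K_0(A,\R)$ compatible with the sequence (\ref{K0AR short}). On the centres I construct a compatible homomorphism $N_\rho : Z(B_\R)^\times \to Z(A_\R)^\times$ defined as the reduced norm, over $A_\R$, of the multiplication-by-$z$ endomorphism of $B_\R$; equivalently, $N_\rho$ is characterised by $\nr_A(\rho^*(x)) = N_\rho(\nr_B(x))$ for every $x \in K_1(B_\R)$. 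After reducing to the case where $E$ is a number field via Remark \ref{rmk E0}, and then to the central simple case, the identity $\rho^* \circ \hat\delta_B = \hat\delta_A \circ N_\rho$ follows from the classical theory of reduced norms as used in Lemmas \ref{lem nr} and \ref{lem K1 cartesien}. The Euler-product definition of the equivariant $L$-function, applied to the $\ell$-adic realisation of $\rho^* M$ (which is that of $M$ with the action restricted), gives $L({}_A H^i(\rho^* M),s) = N_\rho\bigl(L({}_B H^i(M),s)\bigr)$; applying $\rho^*$ to the exact sequences (\ref{HD 1}) and (\ref{HD 2}) gives $\vartheta_\infty(\rho^* M,i,n) = \rho^*(\vartheta_\infty(M,i,n))$ and likewise for $\vartheta'_\infty$; and the reduced-rank statement (\ref{conj3 eq}) transports in the same way. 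Applying $\rho^*$ (resp.\ $N_\rho$) to the conjectural equality over $B$ therefore produces the conjectural equality over $A$.

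For the converse under the hypothesis that $\rho$ is surjective, semisimplicity of $A$ forces $\rho$ to split as a canonical product decomposition $A = B \times C$ with $C = \ker \rho$ and $\rho$ the projection onto the first factor. All relevant invariants then decompose accordingly: $K_0(A,\R) \cong K_0(B,\R) \times K_0(C,\R)$, $Z(A_\R)^\times \cong Z(B_\R)^\times \times Z(C_\R)^\times$, and under these identifications the maps $\rho^*$ and $N_\rho$ become the inclusions of the first factor. Since $C$ acts by zero on all realisations of $\rho^* M$, the $C$-component of each piece of data attached to $\rho^* M$ vanishes, so the $A$-conjecture for $\rho^* M$ is, component by component, the $B$-conjecture for $M$ on the first factor and a trivial identity on the second; hence the $A$-conjecture implies the $B$-conjecture. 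The main technical obstacle I anticipate is the clean construction of $N_\rho$ and the verification of its compatibility with Euler factors when $E$ is not a number field; this is handled, as in the proof of Proposition \ref{func1}, by descending to a finite subextension of $E/\Q$.
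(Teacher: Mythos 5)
Your overall strategy is the same as the paper's: restrict scalars along $\rho$ to get maps $\rho^*$ on $K$-groups and on $K_0(\cdot,\R)$, build a compatible norm map on centres, check that it intertwines the extended boundary maps and the equivariant $L$-functions, transport the elements $\vartheta_\infty,\vartheta'_\infty$, and for the converse use $A\cong B\times C$ so that $\rho^*$ is a split injection. However, there is a genuine error at the pivot of the argument: your primary definition of $N_\rho$ — the reduced norm over $A_\R$ of the multiplication-by-$z$ endomorphism of $B_\R$ — is \emph{not} the map satisfying $L({}_A H^i(\rho^*M),s)=N_\rho\bigl(L({}_B H^i(M),s)\bigr)$, and it is not equivalent to your second description. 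Test the case $\rho=\id$, $B=M_m(E)$ with $m\geqslant 2$: multiplication by a central element $z=\lambda$ on the rank-one free module $B_\R$ represents the class of the unit $z$ in $K_1(B_\R)$, whose reduced norm is $\lambda^m$, so your primary $N_\rho$ is $z\mapsto \nr(z)$ rather than the identity, and your claimed compatibility would read $L=\nr(L)$, false as soon as a simple factor has reduced degree $>1$. In general your two descriptions differ by raising each central component to the reduced degree of the corresponding simple factor, so the "equivalently" is not available to you.

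Your second description, $\nr_A(\rho^*(x))=N_\rho(\nr_B(x))$ for $x\in K_1(B_\R)$, is the correct compatibility but does not by itself define $N_\rho$ on all of $Z(B_\R)^\times$: by Lemma \ref{lem K1 cartesien} the map $\nr_\R$ is injective but not surjective (its image is cut out by positivity conditions at the quaternionic real places), while the $L$-value you must feed into $N_\rho$ lies a priori only in $Z(B_\R)^\times$. This is exactly the point the paper handles by using the \emph{complex} reduced norm: since $\nr_\CC\colon K_1(B_\CC)\to Z(B_\CC)^\times$ is an isomorphism, one defines the norm map as the composite $Z(B_\CC)^\times\cong K_1(B_\CC)\xrightarrow{\rho^*}K_1(A_\CC)\cong Z(A_\CC)^\times$ and then takes $\Gal(\CC/\R)$-invariants; the identity $\rho^*\circ\hat{\delta}_B=\hat{\delta}_A\circ N_\rho$ then holds because it holds on $K_1(B_\R)$ by functoriality of $\delta$ and on $Z(B)^\times$ since $N_\rho(Z(B)^\times)\subset Z(A)^\times$, and the $L$-function compatibility is the functoriality statement of Burns--Flach (Thm 4.1), not an Euler-product computation with your $N_\rho$. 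With the norm map corrected in this way, the remainder of your outline — the transport of $\vartheta_\infty,\vartheta'_\infty$ and of the reduced-rank statement, the reduction to a number field as in Proposition \ref{func1}, and the converse via $A\cong B\times C$ with the $C$-component of all data vanishing — is sound and coincides with the paper's proof.
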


\begin{proof}
The map $\rho$ induces an exact functor from the category of finitely generated $B$-modules to the category of finitely generated $A$-modules, which in turns induces maps $\rho^*$ on $K$-groups. Assume the equivariant Beilinson conjecture for $L({}_B H^i(M),n)$. Let ${}_B \vartheta_\infty$ be the corresponding element of $K_0(B,\R)$, and let ${}_A \vartheta_\infty = \rho^*({}_B \vartheta_\infty)$. By Lemma \ref{lem K1 cartesien}, we have isomorphisms $K_1(A_\CC) \cong Z(A_\CC)^\times$ and $K_1(B_\CC) \cong Z(B_\CC)^\times$. We use these to define a norm map $\rho^* : Z(B_\CC)^\times \to Z(A_\CC)^\times$. By construction of the equivariant $L$-function, we then have $\rho^* (L({}_B H^i(M),s))=L({}_A H^i(\rho^* M),s)$ (see \cite[Thm 4.1]{burns-flach}). Taking invariants under $\Gal(\CC/\R)$, we also have a map $\rho^* : Z(B_\R)^\times \to Z(A_\R)^\times$, and we are left to show that $\rho^*$ commutes with the extended boundary map, in other words that $\rho^* \circ \hat{\delta}_B = \hat{\delta}_A \circ \rho^*$. This identity is true on $K_1(B_\R)$ because the boundary map is functorial, and it is true on $Z(B)^\times$ because $\rho^*(Z(B)^\times) \subset Z(A)^\times$.

Assume $\rho$ is surjective. By the discussion above, it suffices to prove that $\rho^* : K_0(B,\R) \to K_0(A,\R)$ is injective. Since $A$ is semisimple, we must have an isomorphism $A \cong B \times B'$ such that $\rho$ becomes the canonical projection. Then $K_0(A,\R) \cong K_0(B,\R) \oplus K_0(B',\R)$ and the result is clear.
\end{proof}

\begin{pro}\label{func3}
Let $E$ be a number field, and let $A$ be a finite-dimensional semisimple $E$-algebra. Let $e$ be a nonzero idempotent of $A$, and let $A'=eAe$. Let $M=(X,p,0) \in \CHM_K(A)$ be a Chow motive, and let $i,n$ be integers such that $0 \leqslant i \leqslant 2\dim X$ and $n > \frac{i}{2}+1$. If the equivariant Beilinson conjecture holds for $L({}_A H^i(M),n)$, then it holds for $L({}_{A'} H^i(e(M)),n)$.
\end{pro}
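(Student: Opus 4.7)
The plan is to reduce to the case where $A$ is simple, and then to exploit the Morita equivalence between $A$ and $A' = eAe$.

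First I would write $A = \prod_{j=1}^r A_j$ as a product of simple $E$-algebras, with primitive central idempotents $e^{(j)} \in A$. Setting $e_j = e \cdot e^{(j)}$ gives $e = \sum_j e_j$ with $e_j \in A_j$ and $A' = \prod_j e_j A_j e_j$. Since the $e^{(j)}$ are central in $A$, the motive $M$ decomposes in $\CHM_K(A)$ as $M = \bigoplus_j M_j$ with $M_j = e^{(j)}(M) \in \CHM_K(A_j)$, and correspondingly $e(M) = \bigoplus_j e_j(M_j)$ with each summand in $\CHM_K(e_j A_j e_j)$. Compatibility of the equivariant Beilinson conjecture with direct sums (noted at the start of this subsection) reduces the problem to the case where $A$ is simple; indices with $e_j = 0$ contribute trivially, so we may further assume $e \neq 0$.

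In the simple case, $AeA$ is a nonzero two-sided ideal of $A$ and hence equals $A$, so the functor $V \mapsto eV$ is a Morita equivalence between the categories of $A$-modules and of $A'$-modules, with quasi-inverse $W \mapsto Ae \otimes_{A'} W$. This equivalence induces a canonical isomorphism $Z(A_F) \cong Z(A'_F)$ for any field $F$ of characteristic zero, compatibly with base change, and isomorphisms on all the $K$-groups used to define $K_0(A,\R)$ and $K_0(A',\R)$; in particular the extended boundary maps $\hat{\delta}$ match. Concretely, every realization of $e(M)$ is the $e$-part of the corresponding realization of $M$; the Beilinson regulator for $e(M)$ is the restriction of that for $M$ to the $e$-parts; and the exact sequences (\ref{HD 1}) and (\ref{HD 2}) for $e(M)$ are obtained from those for $M$ by applying the exact functor $e$. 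It follows that $\vartheta_\infty(e(M),i,n)$ and $\vartheta'_\infty(e(M),i,n)$ are the images of $\vartheta_\infty(M,i,n)$ and $\vartheta'_\infty(M,i,n)$ under $K_0(A,\R) \cong K_0(A',\R)$. The crucial check is that $L({}_{A'} H^i(e(M)),s)$ equals the image of $L({}_A H^i(M),s)$ under $Z(A_\CC) \cong Z(A'_\CC)$, which reduces to the Morita invariance of the reduced characteristic polynomial at each unramified prime. Putting these compatibilities together, the equivariant Beilinson conjecture for $L({}_A H^i(M),n)$ transfers to the equivariant Beilinson conjecture for $L({}_{A'} H^i(e(M)),n)$.

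The main obstacle is the verification that the equivariant $L$-function and the extended boundary map are preserved under the Morita equivalence; while this is essentially bookkeeping with reduced norms, one can alternatively appeal directly to the functoriality results of Burns and Flach in \cite{burns-flach}, where the behaviour of the equivariant Tamagawa number conjecture under passage to a corner ring $eAe$ is already treated in the required generality.
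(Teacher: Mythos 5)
Your argument is correct, but it is packaged differently from the paper's proof. The paper does not reduce to the case of a simple algebra and never uses that $e$ becomes a full idempotent there: it works directly with the exact corner functor $e^*\colon V \mapsto eV$ for the given semisimple $A$, notes that it induces maps $K_1(A_\R) \to K_1(A'_\R)$ and $K_0(A,\R) \to K_0(A',\R)$ commuting with the reduced norms (hence with the extended boundary maps) and that $e^*\bigl(L({}_A H^i(M),s)\bigr) = L({}_{A'} H^i(e(M)),s)$ by the very definition of the equivariant $L$-function in \cite{burns-flach}, and then simply pushes the equality $\hat{\delta}(L) = \vartheta_\infty$ forward along $e^*$, applying $e^*$ to every object in the regulator sequence. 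Since the implication only goes from $A$ to $A'=eAe$, no injectivity or bijectivity of $e^*$ is needed, so your decomposition into simple factors and the Morita-theoretic input ($AeA=A$ for $A$ simple and $e \neq 0$) are superfluous --- though not wrong, and they buy a little extra, namely that on each simple factor with $e_j \neq 0$ the two conjectures are actually equivalent rather than one implying the other. Two small points to make explicit if you keep your route: first, the reduction step uses compatibility with decompositions of the coefficient algebra $A=\prod_j A_j$ (not merely direct sums of motives over a fixed $A$, which is what the paper's opening remark addresses); this is routine since $K_0(A,\R)$, $Z(A_\R)^\times$, $\hat{\delta}$, the equivariant $L$-function and the regulator data all decompose componentwise, but it deserves a sentence. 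Second, the ``crucial check'' you flag (Morita invariance of the equivariant $L$-factors and of the extended boundary map) is exactly the pair of compatibilities the paper records, so in the end both proofs rest on the same computations with the corner functor.
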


\begin{proof}
The algebra $A'$ is semisimple (see \cite[\S 9, Exerc. 10d, p. 162]{bourbaki-alg8}). We have an exact functor $e^*$ sending a finitely generated $A$-module $V$ to the $A'$-module $V'=e(V)$. It induces maps $e^* : K_1(A_\R) \to K_1(A'_\R)$ and $e^* : K_0(A,\R) \to K_0(A',\R)$. Moreover, we have a morphism of $E$-algebras $e^* : Z(A) \to Z(A')$ sending $x$ to $exe$. By definition of the reduced norm map, the diagram
\begin{equation}
\begin{tikzcd}
K_1(A_\R) \rar{\nr_\R} \dar{e^*} & Z(A_\R)^\times \dar{e^*} \\
K_1(A'_\R) \rar{\nr'_\R} & Z(A'_\R)^\times
\end{tikzcd}
\end{equation}
is commutative. It follows that $e^*$ commutes with the extended boundary maps. By definition of the equivariant $L$-function \cite[\S 4.1]{burns-flach}, we have $e^*(L({}_A H^i(M),s)) = L({}_{A'} H^i(e(M)),s)$. Assume the equivariant Beilinson conjecture for $L({}_A H^i(M),n)$, and let ${}_A \vartheta_\infty$ be the corresponding element of $K_0(A,\R)$. Applying $e^*$ to all objects appearing in the Beilinson regulator map, we see that the element of $K_0(A',\R)$ associated to the regulator map for ${}_{A'} H^i(e(M))$ is simply $e^*({}_A \vartheta_\infty)$. Thus the equivariant conjecture for $L({}_{A'} H^i(e(M)),n)$ holds.
\end{proof}

\section{Modular abelian varieties} \label{sec modular abvar}

In this section and \S \ref{sec modular curves}, we fix a newform $f=\sum_{n \geq 1} a_n q^n$ of weight $2$ on $\Gamma_1(N)$ \emph{without complex multiplication}. Let $K_f \subset \CC$ be the number field generated by the Fourier coefficients of $f$.

Let $A_f/\Q$ be the modular abelian variety attached to $f$. It is defined as the quotient $J_1(N)/I_f J_1(N)$, where $J_1(N)$ is the Jacobian of the modular curve $X_1(N)$, and $I_f$ is the annihilator of $f$ in the Hecke algebra. There is a natural isomorphism $K_f \cong \End_\Q(A_f) \otimes \Q$, which shows that $A_f$ is simple over $\Q$. In general, the abelian variety $A_f$ is not absolutely simple. We first recall a standard result on the simple factors of $A_f$ over a given extension of $\Q$.

Fix a subfield $F$ of $\Qb$. Let $X=\End_F(A_f) \otimes \Q$ be the endomorphism algebra of $(A_f)_F$. The following theorem was proved by Ribet \cite[Thm 5.1]{ribet:twists} in the case $F=\Qb$. The general case follows rather easily from this case.

\begin{thm}\label{thm BfF}
\begin{enumerate}
\item[(a)] The center $k$ of $X$ is a subfield of $K_f$.
\item[(b)] The dimension of $X$ over $k$ is $[K_f:k]^2$.
\item[(c)] The abelian variety $A_f$ is isogenous over $F$ to the power of a simple abelian variety $B_{f,F}/F$.
\item[(d)] The abelian variety $B_{f,F}$ is unique up to $F$-isogeny. Moreover, if $F/\Q$ is Galois, then $B_{f,F}$ is $F$-isogenous to all its $\Gal(F/\Q)$-conjugates.
\end{enumerate}
\end{thm}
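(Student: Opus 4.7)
The plan is to deduce the general case from Ribet's theorem (the $F = \Qb$ case) by a centralizer argument carried out inside $X_{\Qb} := \End_{\Qb}(A_f) \otimes \Q$. Write $k_{\Qb}$ for the center of $X_{\Qb}$. Ribet supplies the inclusion $k_{\Qb} \subseteq K_f$, the $\Qb$-isogeny decomposition $A_f \sim B^t$, and --- crucially for what follows --- the dimension formula $\dim_{k_{\Qb}} X_{\Qb} = [K_f:k_{\Qb}]^2$.

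First I would observe that this dimension formula makes $K_f$ a maximal commutative subalgebra of the central simple $k_{\Qb}$-algebra $X_{\Qb}$, so that $C_{X_{\Qb}}(K_f) = K_f$. The first three claims then fall out quickly. Since $X \subseteq X_{\Qb}$ contains $K_f$, the center $k := Z(X)$ lies in $C_{X_{\Qb}}(K_f) = K_f$, proving (a). In particular $k$ is a field, so $X$ is simple as a $\Q$-algebra, whence $X \cong M_r(D_0)$ for some central division algebra $D_0$ over $k$. Translated back to abelian varieties, this produces an $F$-isogeny $A_f \sim B_{f,F}^r$ with $B_{f,F}/F$ simple and $\End_F(B_{f,F}) \otimes \Q \cong D_0$, which is (c). For (b), the chain $K_f \subseteq C_X(K_f) \subseteq C_{X_{\Qb}}(K_f) = K_f$ forces $C_X(K_f) = K_f$, so the double centralizer theorem applied to $K_f \subseteq X$ over the center $k$ yields $[X:k] = [K_f:k]^2$.

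For (d), uniqueness of $B_{f,F}$ up to $F$-isogeny is the standard uniqueness of the isotypic factor in a semisimple category. The Galois compatibility statement then follows from the fact that $A_f$ descends to $\Q$: for $\sigma \in \Gal(F/\Q)$ one has $(A_f)^\sigma = A_f$ as $F$-abelian varieties, so conjugating the decomposition $A_f \sim_F B_{f,F}^r$ gives $A_f \sim_F (B_{f,F}^\sigma)^r$, and uniqueness forces $B_{f,F}^\sigma \sim_F B_{f,F}$.

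The only real subtlety --- and the step I would double-check carefully --- is pinning down the exact shape of Ribet's output in the $F=\Qb$ case, since the whole argument above hinges on the maximality of $K_f$ as a commutative subalgebra of $X_{\Qb}$. Once that input is correctly recorded, everything else is formal central-simple-algebra bookkeeping.
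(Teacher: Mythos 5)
Your proposal is correct and follows essentially the same route as the paper: reduce to Ribet's $F=\Qb$ case, use that $K_f$ is its own commutant in $\End_{\Qb}(A_f)\otimes\Q$ to get $k\subseteq K_f$ and $C_X(K_f)=K_f$, apply the double centralizer (maximal commutative subalgebra) theorem for (b), read off (c) from simplicity of $X$, and get (d) from uniqueness of the isotypic decomposition plus the fact that $A_f$ is defined over $\Q$. The only cosmetic difference is the source of the key commutant fact: the paper deduces $C_{X_{\Qb}}(K_f)=K_f$ from the non-CM hypothesis (no CM abelian subvariety, citing the proof of Ribet's Prop.\ 5.2), whereas you recover it from the dimension formula $[X_{\Qb}:k_{\Qb}]=[K_f:k_{\Qb}]^2$ in Ribet's Theorem 5.1 via the centralizer theorem — both inputs are available and the arguments are equivalent.
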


\begin{proof}
Since $f$ doesn't have complex multiplication, the abelian variety $(A_f)_{\Qb}$ has no abelian subvariety of CM-type. This implies that $K_f$ is its own commutant in $\End_{\Qb}(A_f) \otimes \Q$ (see the proof of \cite[Prop. 5.2]{ribet}), which proves $(a)$. Now $X$ is a central simple algebra over $k$, and $K_f$ is a (semisimple) maximal commutative subalgebra of $X$, so that $[X:k]=[K_f:k]^2$ by \cite[\S 14, N°6, Prop. 3]{bourbaki-alg8}, which proves $(b)$. Moreover $k$ being a field means precisely that $A_f$ is $F$-isogenous to the power of a simple abelian variety over $F$, which proves $(c)$. Finally $(d)$ follows from the unicity of decomposition of $(A_f)_F$ into simple factors up to isogeny, together with the fact that $A_f$ is defined over $\Q$.
\end{proof}

\begin{remark}
In the particular case where $F/\Q$ is Galois and $B_{f,F}$ is an elliptic curve, Theorem \ref{thm BfF}$(d)$ says precisely that $B_{f,F}$ is a $\Q$-curve completely defined over $F$ in the terminology of \cite[p. 286]{quer:Qcurves}.
\end{remark}

It is known that the minimal number field over which all endomorphisms of $A_f$ are defined is a finite abelian extension of $\Q$ \cite[Prop. 2.1]{gonzalez-lario}.

In the following, we fix a finite abelian extension $F/\Q$. We show that the $L$-function of $B_{f,F}$ can be expressed as a product of twists of $L$-functions of conjugates of $f$. Note that $B_{f,F}$ is defined only up to $F$-isogeny, but it makes sense to speak of its $L$-function.

Let $V_\ell$ be the Tate module of $A_f$ with coefficients in $\Q_\ell$. It carries an action of $G_\Q=\Gal(\Qb/\Q)$. After choosing an isomorphism $\overline{\Q_\ell} \cong \CC$, we have a decomposition
\begin{equation}\label{decomp_Vell}
\overline{V_\ell} := V_\ell \otimes_{\Q_\ell} \overline{\Q_\ell} \cong \prod_{\sigma : K_f \hookrightarrow \CC} V_{f^\sigma}
\end{equation}
where $V_{f^\sigma}$ denotes the $2$-dimensional $\overline{\Q_\ell}$-representation of $G_\Q$ associated to $f^\sigma$. This decomposition is compatible with the action of $K_f$, where $K_f$ acts on $V_{f^\sigma}$ through $\sigma$. Let $G = \Gal(F/\Q)$, and let $\hat{G}$ be the group of complex-valued characters of $G$. We will identify elements of $\hat{G}$ with Dirichlet characters in the usual way.

\begin{lem}\label{emb_equiv_F}
Let $\sigma,\tau : K_f \hookrightarrow \CC$. The following conditions are equivalent :
\begin{enumerate}
\item[(a)] The restrictions of $V_{f^\sigma}$ and $V_{f^\tau}$ to $G_F=\Gal(\Qb/F)$ are isomorphic.
\item[(b)] There exists a character $\chi \in \hat{G}$ such that $f^\tau = f^\sigma \otimes \chi$.
\item[(c)] We have $\sigma |_k = \tau|_k$.
\end{enumerate}
If these conditions are satisfied, then the character $\chi$ in (b) is unique.
\end{lem}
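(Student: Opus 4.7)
The plan is to prove $(b)\Rightarrow(a)$ directly, $(a)\Rightarrow(b)$ via Frobenius reciprocity together with strong multiplicity one, and $(a)\Leftrightarrow(c)$ by appeal to Faltings' isogeny theorem; the uniqueness of $\chi$ will follow from the non-CM hypothesis.

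The implication $(b)\Rightarrow(a)$ is immediate: if $f^\tau=f^\sigma\otimes\chi$ with $\chi\in\hat{G}$, then $V_{f^\tau}\cong V_{f^\sigma}\otimes\chi$ as $G_\Q$-representations, and $\chi$ is trivial on $G_F$ since it factors through $G=\Gal(F/\Q)$. For the converse, I would use Frobenius reciprocity: nonvanishing of $\Hom_{G_F}(V_{f^\sigma},V_{f^\tau})$ is equivalent to nonvanishing of $\Hom_{G_\Q}(V_{f^\sigma},V_{f^\tau}\otimes\Ind_{G_F}^{G_\Q}\mathbf{1})$, and the induced representation decomposes as $\bigoplus_{\chi\in\hat{G}}\chi$. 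Hence some $\chi\in\hat{G}$ produces an isomorphism $V_{f^\sigma}\cong V_{f^\tau}\otimes\chi\cong V_{f^\tau\otimes\chi}$ of $2$-dimensional irreducible $G_\Q$-representations, and strong multiplicity one for newforms then yields $f^\sigma=f^\tau\otimes\chi$, which is (b) after replacing $\chi$ by its inverse.

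For $(a)\Leftrightarrow(c)$, the crux, I would invoke Faltings' isogeny theorem to identify $X\otimes_\Q\overline{\Q_\ell}\cong\End_{G_F}(\overline{V_\ell})$. The center $k\otimes_\Q\overline{\Q_\ell}$ is then a product of copies of $\overline{\Q_\ell}$ indexed by embeddings $\iota:k\hookrightarrow\overline{\Q_\ell}\cong\CC$, and its idempotent decomposition of $\overline{V_\ell}$ is precisely the $G_F$-isotypic decomposition. The idempotent attached to $\iota$ cuts out the sum $\bigoplus_{\sigma|_k=\iota}V_{f^\sigma}$ of factors in (\ref{decomp_Vell}), so $V_{f^\sigma}|_{G_F}\cong V_{f^\tau}|_{G_F}$ if and only if $\sigma|_k=\tau|_k$.

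Finally, if $\chi_1,\chi_2\in\hat{G}$ both satisfy (b), then $\chi_0=\chi_1\chi_2^{-1}$ satisfies $f^\sigma\otimes\chi_0=f^\sigma$, so $a_p(f^\sigma)=0$ whenever $\chi_0(p)\neq 1$. Since $f^\sigma$ is non-CM (being Galois-conjugate to $f$), the primes with $a_p(f^\sigma)=0$ have density zero by Serre, while a nontrivial Dirichlet character takes values $\neq 1$ on a set of primes of positive density; this forces $\chi_0=1$. The main technical obstacle is the invocation of Faltings' theorem in step $(a)\Leftrightarrow(c)$; the other steps reduce to standard Clifford theory, strong multiplicity one, and analytic density.
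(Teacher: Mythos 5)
Your proof is correct and takes essentially the same route as the paper: the equivalence of (a) and (c) rests, exactly as in the paper's argument, on Faltings' identification $X\otimes_\Q\overline{\Q_\ell}\cong\End_{\overline{\Q_\ell}[G_F]}(\overline{V_\ell})$ together with the fact that the centre $k\otimes_\Q\overline{\Q_\ell}$ governs the $G_F$-isotypic decomposition (the paper phrases (c)$\Rightarrow$(a) as a count of isotypic classes against $\dim Z(X\otimes\overline{\Q_\ell})=[k:\Q]$ rather than via central idempotents, which is only a presentational difference), while your (a)$\Leftrightarrow$(b) and the uniqueness of $\chi$ are the same standard twisting, multiplicity-one and non-CM (Serre density) arguments that the paper leaves implicit. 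The only caveat is that your decomposition $\Ind_{G_F}^{G_\Q}\mathbf{1}\cong\bigoplus_{\chi\in\hat{G}}\chi$ uses that $G$ is abelian, which is the setting the paper implicitly works in here (it makes the same identification in Proposition \ref{LBfF-pro}), so this is consistent rather than a gap.
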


\begin{proof}
See the proof of \cite[Thm 4.4]{ribet:twists}.
\end{proof}

Consider the equivalence relation on $\Hom(K_f,\CC)$ given by Lemma \ref{emb_equiv_F}, namely $\sigma \sim \tau \Leftrightarrow \sigma |_k = \tau |_k$. Fix a system $\Sigma$ of representatives of $\Hom(K_f,\CC)/\sim$. We have $|\Sigma| = [k:\Q]$.

\begin{pro}\label{LBfF-pro}
Let $D=\End_F(B_{f,F}) \otimes \Q$, and let $t =[D:k]^{1/2}$ be the degree of $D$. We have
\begin{equation}\label{LBfF-formula}
L(B_{f,F}/F,s) = \prod_{\sigma \in \Sigma} \prod_{\chi \in \hat{G}} L(f^\sigma \otimes \chi,s)^t.
\end{equation}
\end{pro}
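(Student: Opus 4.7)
The plan is to pin down the $\ell$-adic Galois representation attached to $B_{f,F}$ as a $G_F$-module and then convert the $L$-function of $B_{f,F}/F$ into a product of $L$-functions of twists of conjugates of $f$ via Artin formalism.

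First I would compute $V_\ell(B_{f,F}) \otimes_{\Q_\ell} \overline{\Q_\ell}$ as a $G_F$-module. An $F$-isogeny $A_f \sim_F B_{f,F}^r$ gives $V_\ell(A_f) \cong V_\ell(B_{f,F})^r$ as $G_F$-modules. Tensoring with $\overline{\Q_\ell}$ and combining (\ref{decomp_Vell}) with Lemma~\ref{emb_equiv_F} yields a $G_F$-isomorphism
$$\overline{V_\ell}|_{G_F} \cong \bigoplus_{\sigma \in \Sigma} W_\sigma^{[K_f:k]},$$
where $W_\sigma := V_{f^\sigma}|_{G_F}$ and the $W_\sigma$ for $\sigma \in \Sigma$ are pairwise non-isomorphic simple $G_F$-representations by Faltings's theorem (\ref{XQell}). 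The structural isomorphism $X \cong M_r(D)$ together with Theorem~\ref{thm BfF}$(b)$ yields $r^2 t^2 [k:\Q] = \dim_\Q X = [K_f:k]^2 [k:\Q]$, hence $rt = [K_f:k]$. Dividing the above multiplicity by $r$ thus shows that each $W_\sigma$ occurs in $V_\ell(B_{f,F}) \otimes \overline{\Q_\ell}$ with multiplicity exactly $t$.

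It follows that $L(B_{f,F}/F, s) = \prod_{\sigma \in \Sigma} L(W_\sigma/F, s)^t$. Next I would apply the inductivity of Artin $L$-functions: $L(W_\sigma/F, s) = L(\mathrm{Ind}_{G_F}^{G_\Q}(V_{f^\sigma}|_{G_F})/\Q, s)$, and by the projection formula $\mathrm{Ind}_{G_F}^{G_\Q}(V_{f^\sigma}|_{G_F}) \cong V_{f^\sigma} \otimes \mathrm{Ind}_{G_F}^{G_\Q} \mathbf{1}$. Since $F/\Q$ is abelian (which the use of characters $\chi \in \hat{G}$ implicitly requires), $\mathrm{Ind}_{G_F}^{G_\Q} \mathbf{1} \cong \bigoplus_{\chi \in \hat{G}} \chi$ as $G_\Q$-representations, so $L(W_\sigma/F, s) = \prod_{\chi \in \hat{G}} L(f^\sigma \otimes \chi, s)$, which combines with the previous step to give the claim.

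The main technical point is the multiplicity computation: $\overline{V_\ell}$ is built from $[K_f:\Q]$ essentially distinct two-dimensional $G_\Q$-representations, which collapse upon restriction to $G_F$ into $[k:\Q]$ distinct simples each of multiplicity $[K_f:k]$. What is nontrivial is that this multiplicity is divisible by $r$ with quotient exactly equal to the degree $t$ of $D$; this is secured by the equality $rt = [K_f:k]$ coming from the dimension formula in Theorem~\ref{thm BfF}$(b)$. Once the $G_F$-decomposition of $V_\ell(B_{f,F}) \otimes \overline{\Q_\ell}$ is established, the remainder is the standard inductivity of Artin $L$-functions.
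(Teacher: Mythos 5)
Your proof is correct and follows essentially the same route as the paper: both arguments use the isogeny $A_f \sim_F B_{f,F}^r$ together with Theorem \ref{thm BfF}(b) (giving $rt=[K_f:k]$) and Lemma \ref{emb_equiv_F} to show the $\overline{\Q_\ell}$-Tate module of $B_{f,F}$ is $\bigoplus_{\sigma \in \Sigma} (V_{f^\sigma}|_{G_F})^{t}$, and then conclude by inductivity of $L$-functions via $\Ind_{G_F}^{G_\Q}(V_{f^\sigma}|_{G_F}) \cong \bigoplus_{\chi \in \hat{G}} V_{f^\sigma}\otimes\chi$. Your write-up merely makes explicit the multiplicity bookkeeping that the paper leaves implicit.
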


\begin{proof}
Write $A_f \sim_F B_{f,F}^n$, so that $X \cong M_n(D)$. By Theorem \ref{thm BfF}$(b)$, we know that $[K_f:k]=nt$. By Lemma \ref{emb_equiv_F}, the $\overline{\Q_\ell}$-Tate module of $B_{f,F}$ is isomorphic as a $G_F$-module to $\prod_{\sigma \in \Sigma} V_{f^\sigma}^t$. For a given embedding $\sigma : K_f \hookrightarrow \CC$, we have
\begin{equation*}
\Ind_{G_F}^{G_\Q} (V_{f^\sigma} |_{G_F}) \cong \bigoplus_{\chi \in \hat{G}} V_{f^\sigma} \otimes \chi.
\end{equation*}
Taking $L$-functions of both sides, and using Artin formalism, we get
\begin{equation*}
L(V_{f^\sigma} |_{G_F},s) = \prod_{\chi \in \hat{G}} L(f^\sigma \otimes \chi,s).
\end{equation*}
The formula for $L(B_{f,F}/F,s)$ follows.
\end{proof}

Conversely, we have the following result by Guitart and Quer \cite{guitart-quer,guitart-quer-jacobian}.

\begin{thm}\label{strongly modular}
Let $A$ be an abelian variety over a number field $F$ such that $L(A/F,s)$ is a product of $L$-functions of newforms of weight $2$ without complex multiplication. Then the extension $F/\Q$ is abelian, and there exist newforms $f_1,\ldots,f_r$ of weight $2$ without complex multiplication such that $A$ is $F$-isogenous to $B_{f_1,F} \times \cdots \times B_{f_r,F}$.
\end{thm}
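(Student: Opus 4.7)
My plan is to analyze the $F$-isogeny decomposition of $A$ via Faltings's isogeny theorem and then appeal to the classification of strongly modular abelian varieties due to Guitart and Quer.

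First, write $A \sim_F \prod_j C_j^{n_j}$ with the $C_j$ pairwise non-isogenous $F$-simple abelian varieties, and write the given $L$-factorization as $L(A/F,s)=\prod_{i=1}^s L(g_i,s)$ with the $g_i$ non-CM weight-two newforms. By Faltings's theorem, the semisimple $G_F$-representation $V_\ell(A) \otimes_{\Q_\ell} \overline{\Q_\ell}$ is determined by the Frobenius characteristic polynomials on $V_\ell(A)$ and must therefore be isomorphic to $\bigoplus_i V_{g_i}|_{G_F}$. Each $V_{g_i}|_{G_F}$ is irreducible (\emph{cf.} the proof of Lemma \ref{emb_equiv_F}), so Faltings applied in the reverse direction identifies each $C_j$ with an $F$-simple factor of some $(A_{g_i})_F$, hence with $B_{g_i,F}$ by Theorem \ref{thm BfF}(c). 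Grouping the $g_i$ into twist-equivalence classes under $\hat{G}$ (Lemma \ref{emb_equiv_F}), choosing a representative $f_j$ per class, and matching multiplicities via the formula of Proposition \ref{LBfF-pro} then yields the $F$-isogeny $A \sim_F B_{f_1,F} \times \cdots \times B_{f_r,F}$.

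It remains to show that $F/\Q$ is abelian. The minimal field over which all endomorphisms of each $A_{f_i}$ are defined is abelian over $\Q$ by \cite[Prop. 2.1]{gonzalez-lario}, which constrains the field of definition of the $B_{f_i,F}$ together with their endomorphism algebras. The remaining content, which is the main structural input I would draw from Guitart-Quer \cite{guitart-quer}, is that a Galois number field over which an abelian variety admits an $L$-factorization as above is automatically an abelian extension of $\Q$. The main obstacle lies precisely in this last step: whereas identifying the simple factors is a reasonably direct application of Faltings's theorem and the formalism already set up in this section, the abelianness of $\Gal(F/\Q)$ is genuinely delicate and relies on the full twist-class analysis carried out by Guitart and Quer.
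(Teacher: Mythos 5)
The central identification in your first paragraph is incorrect. The hypothesis $L(A/F,s)=\prod_i L(g_i,s)$ does not yield $V_\ell(A)\otimes\overline{\Q_\ell}\cong\bigoplus_i V_{g_i}|_{G_F}$: by Artin formalism the left-hand side is the $L$-function of the \emph{induced} $G_\Q$-representation $\Ind_{G_F}^{G_\Q}V_\ell(A)$, so comparing Euler factors prime by prime and applying Chebotarev and Brauer--Nesbitt (plus Faltings semisimplicity) gives an isomorphism of $G_\Q$-representations $\Ind_{G_F}^{G_\Q}V_\ell(A)\cong\bigoplus_i V_{g_i}$, not the statement about restrictions. Your claimed isomorphism already fails on dimensions whenever $F\neq\Q$: the factorization forces the number of newforms appearing (with multiplicity) to be $\dim A\cdot[F:\Q]$, while $\dim V_\ell(A)=2\dim A$. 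For a strongly modular non-CM elliptic curve over a quadratic field one has $L(E/F,s)=L(f,s)L(f\otimes\chi,s)$, and $V_\ell(E)$ is isomorphic to each of $V_f|_{G_F}$ and $V_{f\otimes\chi}|_{G_F}$ separately, not to their direct sum. Relatedly, the ``Frobenius characteristic polynomials on $V_\ell(A)$'' are not directly readable from $\prod_i L(g_i,s)$: the Euler factor of $L(A/F,s)$ at a rational prime $p$ is a product over places $v\mid p$ of polynomials in $p^{-f_v s}$, and only after inducing does the prime-by-prime comparison with the $V_{g_i}$ make sense.

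The gap is repairable, and the repair is essentially the paper's proof. From $\Ind_{G_F}^{G_\Q}V_\ell(A)\cong\bigoplus_i V_{g_i}$, Frobenius reciprocity shows that every irreducible $G_F$-constituent of $V_\ell(A)$ is isomorphic to some $V_{g_i}|_{G_F}$ (irreducible since $g_i$ has no CM), and Faltings's Hom-theorem then makes each $F$-simple factor $C_j$ of $A$ a nonzero quotient of a factor of $(A_{g_i})_F$, hence $C_j\sim_F B_{g_i,F}$ by Theorem \ref{thm BfF}(c); the multiplicity matching via Proposition \ref{LBfF-pro} is not needed, since the $f_i$ in the statement may repeat. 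The paper carries out exactly this induction argument geometrically: it sets $B=\Res_{F/\Q}A$, quotes the proof of \cite[Prop 2.3]{guitart-quer} to get $B\sim_\Q A_{f_1}^{n_1}\times\cdots\times A_{f_r}^{n_r}$, and observes that $\Res_{F/\Q}C$ is a factor of $B$ while $C$ is an $F$-simple factor of $(\Res_{F/\Q}C)_F$. Finally, for the abelianness of $F/\Q$ your appeal to \cite[Prop. 2.1]{gonzalez-lario} does no work; like the paper, you are in the end simply invoking \cite[Thm 5.3]{guitart-quer}, which is legitimate, but you should state it as such rather than as an output of the preceding analysis.
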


\begin{proof}
Let $B=\Res_{F/\Q} A$ be the restriction of scalars of $A$. Let $f_1,\ldots,f_r$ be newforms of weight $2$ such that $L(A/F,s)=L(B/\Q,s)=L(f_1,s) \cdots L(f_r,s)$. By the proof of \cite[Prop 2.3]{guitart-quer}, the abelian variety $B$ is $\Q$-isogenous to $A_{f_1}^{n_1} \times \cdots \times A_{f_r}^{n_r}$ for some integers $n_1,\ldots,n_r \geqslant 0$, and by the proof of \cite[Prop 2.2]{guitart-quer-jacobian}, the extension $F/\Q$ is abelian. Let $C$ be a $F$-simple factor of $A$. The abelian variety $D=\Res_{F/\Q} C$ is a factor of $B$, thus is also $\Q$-isogenous to $A_{f_1}^{m_1} \times \cdots \times A_{f_r}^{m_r}$ for some integers $0 \leqslant m_i \leqslant n_i$. Moreover $C$ is a $F$-simple factor of $D_F$ and thus a factor of $(A_f)_F$ for some newform $f$ without CM.
\end{proof}

The abelian varieties whose $L$-functions are products of $L$-functions of newforms of weight 2 are called \emph{strongly modular} in \cite{guitart-quer}. By Theorem \ref{strongly modular}, every non-CM strongly modular abelian variety over a number field is a $\Q$-variety, in the sense that it is isogenous to all its Galois conjugates. In the particular case of elliptic curves, this gives the following result.

\begin{cor}\label{strongly modular 2}
Let $E$ be an elliptic curve without complex multiplication over a number field $F$ such that $L(E/F,s)$ is a product of $L$-functions of newforms of weight $2$. Then the extension $F/\Q$ is abelian, and there exist a newform $f$ of weight $2$ without complex multiplication such that $E$ is $F$-isogenous to $B_{f,F}$. In particular $E$ is a $\Q$-curve completely defined over $F$.
\end{cor}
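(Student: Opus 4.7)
The plan is to specialize Theorem \ref{strongly modular} to the elliptic curve case and then read off the $\Q$-curve statement from Theorem \ref{thm BfF}(d). The main verification is that the hypothesis of Theorem \ref{strongly modular} — that the newforms appearing in the factorization of $L(E/F,s)$ are without CM — is automatically met. For this, I would use that $L(E/F,s)=L(\Res_{F/\Q}E/\Q,s)$, so Faltings's isogeny theorem yields $\Res_{F/\Q}E \sim_\Q \prod_i A_{f_i}^{n_i}$. On the other hand, over $\Qb$ the Weil restriction decomposes as $(\Res_{F/\Q}E)_{\Qb} \sim \prod_{\sigma \in \Gal(F/\Q)} E^\sigma$; since CM is a geometric property, each conjugate $E^\sigma$ is non-CM, so no simple isogeny factor of $\Res_{F/\Q}E$ is of CM type, and therefore no $f_i$ has CM.

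With this verified, I would apply Theorem \ref{strongly modular} to obtain that $F/\Q$ is abelian and $E \sim_F B_{f_1,F} \times \cdots \times B_{f_r,F}$ for non-CM newforms $f_1,\ldots,f_r$. A dimension count finishes the structural part: since $\dim E = 1$ and each $B_{f_i,F}$ has positive dimension, we must have $r=1$ and $B_{f_1,F}$ is itself an elliptic curve, so setting $f:=f_1$ we get $E \sim_F B_{f,F}$.

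Finally, the $\Q$-curve claim is immediate from Theorem \ref{thm BfF}(d): $B_{f,F}$ is $F$-isogenous to all its $\Gal(F/\Q)$-conjugates, and by the remark following Theorem \ref{thm BfF}, in the elliptic curve case this is exactly the statement that $B_{f,F}$ is a $\Q$-curve completely defined over $F$. Since $E \sim_F B_{f,F}$, the same is true of $E$. The only step that is not purely mechanical is the propagation of the no-CM hypothesis from $E$ to the $f_i$; everything else reduces to dimension arithmetic and a direct appeal to the already-proved results.
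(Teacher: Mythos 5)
Your proof is correct and takes essentially the same route the paper intends: the corollary is deduced by specializing Theorem \ref{strongly modular}, with the dimension count forcing $r=1$ and $B_{f,F}$ elliptic, and the $\Q$-curve statement read off from Theorem \ref{thm BfF}(d) together with the remark following it. Your explicit verification that the non-CM hypothesis on $E$ propagates to the newforms (via the geometric decomposition $(\Res_{F/\Q}E)_{\Qb}\sim\prod_\sigma E^\sigma$ and Faltings) is precisely the step the paper leaves implicit, and it is sound.
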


\begin{remark}
As was pointed out to me by Xavier Guitart, Theorem \ref{strongly modular} and Corollary \ref{strongly modular 2} do not hold in the CM case. As an example, let $K=\Q(\sqrt{-23})$ and let $H$ be the Hilbert class field of $K$. Let $E$ be a $\Q$-curve over $H$ with complex multiplication by $K$. Then $E$ can be defined over the cubic number field $\Q(j(E))$. By a result of Nakamura \cite[\S 5, Thm 3]{nakamura}, the restriction of scalars $B=\Res_{\Q(j(E))/\Q} E$ is an abelian variety of $\GL_2$-type, thus $E$ is strongly modular, but the extension $\Q(j(E))/\Q$ is not Galois.
\end{remark}

It was predicted by Serre that the $\Q$-curves are precisely the elliptic curves which arise as quotients of $J_1(N)$ over $\Qb$. This is now a theorem thanks to the work of Ribet \cite{ribet} and the proof of Serre's modularity conjecture due to Khare-Wintenberger (see \cite[Thm 7.2]{khare}). It follows that every $\Q$-curve $E/\Qb$ is isogenous over $\Qb$ to $B_{f,\Qb}$ for some newform $f$ of weight $2$. It seems an interesting question to determine a minimal field of definition for this isogeny in terms of the arithmetic of $E$. By Corollary \ref{strongly modular 2}, every non-CM strongly modular $\Q$-curve $E/F$ is completely defined over $F$. The converse is not true, even if $F/\Q$ is abelian : see the introduction of \cite{guitart-quer} for a counterexample with $F=\Q(\sqrt{-2},\sqrt{-3})$. However, if $F$ is a quadratic field, then every non-CM $\Q$-curve completely defined over $F$ is strongly modular, so that our results will apply to these $\Q$-curves. In the general case, necessary and sufficient conditions for strong modularity in terms of splittings of $2$-cocycles are worked out in \cite[Thm 5.3, Thm 5.4]{guitart-quer}.

\section{Modular curves in the adelic setting} \label{sec modular curves}

\subsection{Notations and standard results}

Let us recall the notations of \cite[\S 4]{brunault:LEF}. Let $\A_f$ be the ring of finite adèles of $\Q$. To any compact open subgroup $K$ of $\GL_2(\A_f)$ is associated a smooth projective modular curve $\overline{M}_K$ over $\Q$, whose set of complex points $\overline{M}_K(\CC)$ is the compactification of the Riemann surface $\GL_2(\Q) \backslash (\mathfrak{h}^{\pm} \times \GL_2(\A_f)) / K$. There are natural projections $\pi_{K',K} : \overline{M}_{K'} \to \overline{M}_K$ for any compact open subgroups $K' \subset K$ of $\GL_2(\A_f)$. For any $g \in \GL_2(\A_f)$, there is a canonical isomorphism $g : \overline{M}_K \xrightarrow{\cong} \overline{M}_{g^{-1}Kg}$, given at the level of complex points by $(\tau,h) \mapsto (\tau,hg)$.

The Hecke algebra $\tilde{\TT}_K$ is the space of functions $K \backslash \GL_2(\A_f)/K \to \Q$ with finite support, equipped with the convolution product \cite{cartier:corvallis}. We may identify $\tilde{\TT}_K$ with its image in the $\Q$-algebra of finite correspondences on $\overline{M}_K$ by sending the characteristic function of $KgK$ to the correspondence $\tilde{T}(g)=\tilde{T}(g)_K$ defined by the diagram
\begin{equation}\label{Ttildeg}
\begin{tikzcd}
 & \overline{M}_{K \cap g^{-1}Kg} \dlar[swap]{\pi} \drar{\pi'} \\
\overline{M}_K \ar[dashed]{rr}{\tilde{T}(g)} & & \overline{M}_K
\end{tikzcd}
\end{equation}
where $\pi = \pi_{K \cap g^{-1} K g, K}$ and $\pi' = \pi_{gKg^{-1} \cap K,K} \circ g^{-1}$.

The space $\Omega^1(\overline{M}_K)$ carries a natural structure of left $\tilde{\TT}_K$-module, and we denote by $\TT_K$ the image of $\tilde{\TT}_K$ in $\End_{\Q} (\Omega^1(\overline{M}_K))$. We denote by $T(g)=T(g)_K$ the canonical image of $\tilde{T}(g)$ in $\TT_K$. Using notations of (\ref{Ttildeg}), we have $T(g) = \pi'_* \circ \pi^*$.

The ring $\tilde{\TT}_K$ also acts from the left on $H^1(\overline{M}_K(\CC),\Q)$, and this action factors through $\TT_K$. In fact, Poincaré duality induces a perfect bilinear pairing
\begin{equation} \label{poincare duality}
\langle \cdot,\cdot \rangle : H^1(\overline{M}_K(\CC),\R)^- \times \bigl(\Omega^1(\overline{M}_K) \otimes \R\bigr) \to \R
\end{equation}
satisfying $\langle \tilde{T}(g) \eta, \omega \rangle = \langle \eta, \tilde{T}(g^{-1}) \omega \rangle$ for every $g \in \GL_2(\A_f)$, $\eta \in H^1(\overline{M}_K(\CC),\R)^-$ and $\omega \in \Omega^1(\overline{M}_K) \otimes \R$.

Let us define $\Omega = \varinjlim_{K} \Omega^1(\overline{M}_K) \otimes \Qb$, where the direct limit is taken with respect to the pull-back maps $\pi_{K',K}^*$. This space carries a natural $\GL_2(\A_f)$-action, and for any $K$ we have $\Omega^K = \Omega^1(\overline{M}_K) \otimes \Qb$. The space $\Omega$ decomposes as a direct sum of irreducible admissible representations $\Omega(\pi)$ of $\GL_2(\A_f)$. Let $\Pi(K)$ be the set of those representations $\pi$ satisfying $\Omega(\pi)^K \neq \{0\}$. We have a direct sum decomposition
\begin{equation}\label{dec Omega1}
\Omega^1(\overline{M}_K) \otimes \Qb = \bigoplus_{\pi \in \Pi(K)} \Omega(\pi)^K
\end{equation}
where the $\Omega(\pi)^K$ are pairwise non-isomorphic simple $\TT_K \otimes \Qb$-modules \cite[p. 393]{langlands}.

\begin{lem}\label{TK semisimple}
The natural map
\begin{equation*}
\TT_K \otimes \Qb \to \prod_{\pi \in \Pi(K)} \End_{\Qb}(\Omega(\pi)^K)
\end{equation*}
is an isomorphism. In particular $\TT_K$ is a semisimple algebra.
\end{lem}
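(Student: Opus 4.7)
The plan is to deduce this directly from the decomposition (\ref{dec Omega1}) via the Jacobson density theorem (Burnside's theorem), so no new automorphic input is needed.

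First I would check injectivity of the asserted map. By construction, $\TT_K$ is the image of $\tilde{\TT}_K$ in $\End_\Q(\Omega^1(\overline{M}_K))$, so $\TT_K$ is finite-dimensional over $\Q$ and $\TT_K \otimes \Qb$ embeds into $\End_\Qb(\Omega^1(\overline{M}_K) \otimes \Qb)$. Each summand $\Omega(\pi)^K$ in (\ref{dec Omega1}) is a $\TT_K \otimes \Qb$-submodule, so the action of $\TT_K \otimes \Qb$ respects the decomposition and the embedding factors through $\prod_{\pi \in \Pi(K)} \End_\Qb(\Omega(\pi)^K)$, giving injectivity.

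For surjectivity, the key point is already stated just before the lemma: the $\Omega(\pi)^K$ for $\pi \in \Pi(K)$ are pairwise non-isomorphic simple $\TT_K \otimes \Qb$-modules, and each is finite-dimensional over the algebraically closed field $\Qb$. By Schur's lemma, the commutant of $\TT_K \otimes \Qb$ acting on $\Omega(\pi)^K$ is $\Qb$; by the Jacobson density theorem (Burnside's theorem, in this finite-dimensional form), the image of $\TT_K \otimes \Qb$ in $\End_\Qb(\Omega(\pi)^K)$ is all of $\End_\Qb(\Omega(\pi)^K)$. Applying density instead to the semisimple module $\bigoplus_{\pi \in \Pi(K)} \Omega(\pi)^K$, and using that distinct simple summands are non-isomorphic, the image of $\TT_K \otimes \Qb$ in the product $\prod_\pi \End_\Qb(\Omega(\pi)^K)$ is the entire product.

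For semisimplicity, the right-hand side $\prod_{\pi \in \Pi(K)} \End_\Qb(\Omega(\pi)^K)$ is a finite product of matrix algebras over $\Qb$ (the set $\Pi(K)$ is finite since $\Omega^1(\overline{M}_K)$ is finite-dimensional), hence semisimple. Since semisimplicity of a finite-dimensional algebra descends along the field extension $\Qb/\Q$, the algebra $\TT_K$ is semisimple over $\Q$. I expect no serious obstacle: the whole argument is a textbook application of density once (\ref{dec Omega1}) is granted; the only thing to be careful about is that the decomposition (\ref{dec Omega1}) is a decomposition of $\TT_K$-modules (not merely of $\GL_2(\A_f)$-representations cut out by $K$-invariants), which is recorded in the sentence preceding the lemma.
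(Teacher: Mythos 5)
Your proof is correct and follows essentially the same route as the paper: injectivity from the definition of $\TT_K$ as an algebra of endomorphisms of $\Omega^1(\overline{M}_K)$, surjectivity by Burnside/Jacobson density using that the $\Omega(\pi)^K$ are pairwise non-isomorphic simple modules, and semisimplicity of $\TT_K$ by descent from the product of matrix algebras over $\Qb$. The extra details you spell out (finiteness of $\Pi(K)$, descent of semisimplicity along $\Qb/\Q$) are exactly what the paper delegates to the Bourbaki references.
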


\begin{proof}
The above map is injective by definition of $\TT_K$. The surjectivity follows from Burnside's Theorem \cite[\S 5, N°3, Cor. 1 of Prop. 4, p. 79]{bourbaki-alg8}. The algebra $\TT_K \otimes \Qb$, being a product of matrix algebras over $\Qb$, is semisimple. This implies that $\TT_K$ is semisimple \cite[\S 12, N°7, Cor. 2 a), p. 218]{bourbaki-alg8}.
\end{proof}

As a consequence of Lemma \ref{TK semisimple}, note that for each $\pi \in \Pi(K)$, the center $Z(\TT_K)$ acts on $\Omega(\pi)^K$ through a character $\theta_{\pi,K} : Z(\TT_K) \to \Qb$.

Let $p$ be a prime number, and let $\varpi_p$ be the element of $\A_f^\times$ whose component at $p$ is equal to $p$, and whose other components are equal to $1$. The Hecke operator $\tilde{T}(p) = \tilde{T}(p)_K \in \tilde{\TT}_K$ is defined as the characteristic function of the double coset $K \begin{pmatrix} \varpi_p & 0 \\ 0 & 1 \end{pmatrix} K$, and the Hecke operator $\tilde{T}(p,p)=\tilde{T}(p,p)_K \in \tilde{\TT}_K$ is defined as the characteristic function of $K \begin{pmatrix} \varpi_p & 0 \\ 0 & \varpi_p \end{pmatrix}$. We let $T(p)=T(p)_K$ and $T(p,p)=T(p,p)_K$ be their respective images in $\TT_K$. If $p$ doesn't divide the level of $K$, meaning that $K$ contains $\GL_2(\Z_p)$ (this happens for all but finitely many $p$), then $\tilde{T}(p)$ and $\tilde{T}(p,p)$ belong to the center of $\tilde{\TT}_K$. In this case $T(p)$ and $T(p,p)$ act by scalar multiplication on each $\Omega(\pi)^K$.

\subsection{Base changes of Hecke correspondences}\label{hecke base change}

In this subsection, we assume that $\det(K)=\hat{\Z}^\times$, which means that $\overline{M}_K$ is geometrically connected.

Let $F$ be a finite abelian extension of $\Q$, with Galois group $G=\Gal(F/\Q)$. Let $U_F$ be the subgroup of $\hat{\Z}^\times$ corresponding to $F$ by abelian class field theory. We have an isomorphism $\hat{\Z}^\times /U_F \cong G$. Let us define
\begin{equation*}
K_F = \{g \in K : \det(g) \in U_F\}.
\end{equation*}
The determinant map induces an isomorphism $K/K_F \cong G$. The modular curve $\overline{M}_{K_F}$ is canonically isomorphic to the base change $\overline{M}_K \otimes_{\Q} F$. The group $G$ acts on the right on $\Spec F$ and $\overline{M}_{K_F}$. This induces a left action of $G$ on $\Omega^1(\overline{M}_{K_F})$. The action of an element $\sigma \in G$ on $\Omega^1(\overline{M}_{K_F})$ coincides with $T(g)_{K_F}$, where $g$ is any representative of $\sigma$ in $K$.

Let $\delta : \overline{M}_{K_F} \to \Spec F$ be the structural morphism. Let $T=(X,\alpha,\beta)$ be a finite correspondence on $\overline{M}_{K_F}$, defined by the diagram
\begin{equation}
\begin{tikzcd}
 & X \dlar[swap]{\alpha} \drar{\beta} \\
\overline{M}_{K_F} \ar[dashed]{rr}{T} & & \overline{M}_{K_F}.
\end{tikzcd}
\end{equation}
There exists a unique element $\sigma \in G$ such that $\delta \circ \beta = \sigma^* \circ \delta \circ \alpha$. We say that $T$ is \emph{defined over $F$} if $\sigma=\id_G$, which amounts to say that $\delta \circ \alpha = \delta \circ \beta$.

\begin{lem}
Let $g \in \GL_2(\A_f)$. The correpondence $\tilde{T}(g)$ on $\overline{M}_{K_F}$ is defined over $F$ if and only if $\det(g) \in \Q_{>0} \cdot U_F$.
\end{lem}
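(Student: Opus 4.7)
The plan is to compare the two projections $\delta\circ\pi$ and $\delta\circ\pi'$ of the correspondence diagram by tracking their action on geometric connected components. Since $\Spec F$ is $0$-dimensional, two morphisms $\overline{M}_{K'}\to\Spec F$ of $\Q$-schemes (writing $K':=K_F\cap g^{-1}K_F g$) coincide if and only if, after base change to $\Qb$, they induce the same map on the finite sets of connected components; this reduces the problem to a double-coset computation. I would also recall the standard identification (via strong approximation for $\SL_2$ together with the transitivity of $\GL_2^+(\Q)$ on $\mathfrak{h}^+$) that for any compact open $H\subset\GL_2(\A_f)$ with $\det H\subset\hat{\Z}^\times$, the determinant yields a canonical bijection
\begin{equation*}
\pi_0\bigl(\overline{M}_H(\CC)\bigr)\;\xrightarrow{\cong}\;\Q_{>0}^\times\backslash\A_f^\times/\det H,\qquad (\tau,h)\mapsto[\det h].
\end{equation*}
In particular $\pi_0(\overline{M}_{K_F}(\CC))\cong\hat{\Z}^\times/U_F=G$, and under the isomorphism $\overline{M}_{K_F}\cong\overline{M}_K\otimes_\Q F$ the structural morphism $\delta$ realizes this bijection on components as the natural identification with $\Hom_\Q(F,\Qb)\cong G$.

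Then I would compute each leg on components. The projection $\pi=\pi_{K',K_F}$ is induced by the inclusion $K'\subset K_F$, so on components it is simply the natural surjection $[a]\mapsto[a]$ onto $G$. The other leg $\pi'=\pi_{gK_Fg^{-1}\cap K_F,K_F}\circ g^{-1}$ involves the isomorphism $g^{-1}:\overline{M}_{K'}\xrightarrow{\cong}\overline{M}_{gK_F g^{-1}\cap K_F}$, which at the complex-analytic level is $(\tau,h)\mapsto(\tau,hg^{-1})$ and therefore sends $[a]\mapsto[a\det(g)^{-1}]$ on components; the subsequent inclusion-induced projection then carries $[a\det(g)^{-1}]$ down to its class in $G$. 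Consequently $\delta\circ\pi$ and $\delta\circ\pi'$ agree on components if and only if $[a]=[a\det(g)^{-1}]$ in $G=\Q_{>0}^\times\backslash\A_f^\times/U_F$ for any (equivalently every) class $[a]$, i.e.\ if and only if $\det(g)\in\Q_{>0}\cdot U_F$.

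The only delicate point is the appearance of the factor $\Q_{>0}$ in the condition, which originates entirely from the fact that only $\GL_2^+(\Q)$ (and not all of $\GL_2(\Q)$) preserves $\mathfrak{h}^+$, so that $\pi_0$ is given by $\Q_{>0}^\times\backslash\A_f^\times/\det H$ rather than $\Q^\times\backslash\A_f^\times/\det H$. As a sanity check, for $g\in K$ one has $\det(g)\in\hat{\Z}^\times$ and $\Q_{>0}\cap\hat{\Z}^\times=\{1\}$, so the condition reduces to $\det(g)\in U_F$, in agreement with the paper's earlier description of the $G$-action on $\Omega^1(\overline{M}_{K_F})$ by the operators $T(g)$ for $g\in K$. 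Apart from keeping straight the direction in which $g^{-1}$ translates components, the rest of the argument is bookkeeping with double cosets.
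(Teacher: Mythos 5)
Your argument is correct. The paper states this lemma without proof, so there is nothing to compare against; your reduction of the equality $\delta\circ\pi=\delta\circ\pi'$ to a computation on geometric connected components, combined with the strong-approximation description $\pi_0(\overline{M}_H(\CC))\cong\Q_{>0}\backslash\A_f^\times/\det H$ and the fact that $(\tau,h)\mapsto(\tau,hg^{-1})$ translates components by $[\det(g)^{-1}]$, is exactly the standard argument the author is implicitly invoking (note that your identification of $\pi_0(\overline{M}_{K_F})$ with $G$ via $\delta$ uses the standing hypothesis $\det(K)=\hat{\Z}^\times$, which is in force in that subsection, and that the normalization of the class-field-theory isomorphism is irrelevant since both legs are composed with the same $\delta$).
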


We denote by $\tilde{\TT}'_{K_F}$ the subalgebra of $\tilde{\TT}_{K_F}$ generated by those correspondences $\tilde{T}(g)_{K_F}$ which are defined over $F$. Let $\TT'_{K_F}$ be the canonical image of $\tilde{\TT}'_{K_F}$ in $\TT_{K_F}$. The elements of $\TT'_{K_F}$ are precisely those elements of $\TT_{K_F}$ which are $F$-linear endomorphisms of $\Omega^1(\overline{M}_{K_F}) \cong \Omega^1(\overline{M}_K) \otimes F$, and we have an isomorphism
\begin{equation}\label{decompT}
\TT_{K_F} =  \TT'_{K_F} \{G\}.
\end{equation}

We now restrict to the case
\begin{equation*}
K = K_1(N) = \biggl\{g \in \GL_2(\Zhat) : g \equiv \begin{pmatrix} * & * \\ 0 & 1 \end{pmatrix} \pmod{N}\biggr\}.
\end{equation*}
The associated modular curves are $\overline{M}_{K_1(N)} = X_1(N)$ and $\overline{M}_{K_1(N)_F} = X_1(N)_F$.

Let us recall the relation between Hecke operators on $X_1(N)$ and $X_1(N)_F$. Define the base change morphism $\nu_F : \End_\Q(\Omega^1(X_1(N))) \to \End_F(\Omega^1(X_1(N)) \otimes F)$ by $\nu_F(T) = T \otimes \id_F$. Fix an integer $m \geqslant 1$ such that $F \subset \Q(\zeta_m)$. For any element $\alpha \in (\Z/m\Z)^\times$, let $\sigma_\alpha$ denote its canonical image in $G$.

The following lemma was proved in \cite[Lemma 13]{brunault:LEF}.

\begin{lem}\label{lem nuF}
For any prime $p$ not dividing $Nm$, we have
\begin{align}
\label{nuF Tp}\nu_F \bigl(T(p)_{K_1(N)}\bigr) & = T(p)_{K_1(N)_F} \cdot \sigma_p\\
\label{nuF Tpp}\nu_F \bigl(T(p,p)_{K_1(N)}\bigr) & = T(p,p)_{K_1(N)_F} \cdot \sigma_p^2.
\end{align}
\end{lem}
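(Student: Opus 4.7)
The plan is to prove the identity at the level of adelic automorphic forms. Both sides of (\ref{nuF Tp}) are $F$-linear endomorphisms of $\Omega^1(X_1(N)_F) = \Omega^1(X_1(N)) \otimes_\Q F$, so it suffices to verify agreement on the subspace of right $K_1(N)$-invariant forms, which $F$-generates $\Omega^1(X_1(N)_F)$.

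The central step is to realize $\nu_F(T(p)_{K_1(N)})$ directly as a single Hecke operator in $\TT_{K_1(N)_F}$. Let $g_0 = \begin{pmatrix} \varpi_p & 0 \\ 0 & 1 \end{pmatrix}$ and choose a representative $k_p \in K_1(N)$ of $\sigma_p$ under $K_1(N)/K_1(N)_F \cong G$. A direct computation of $\varpi_p$ in $\A_f^\times$ modulo $\Q_{>0} \cdot U_F$ shows that $\det(g_0 k_p) \in \Q_{>0} \cdot U_F$, so by \S\ref{hecke base change} the correspondence $\tilde T(g_0 k_p)_{K_1(N)_F}$ is defined over $F$ and $T(g_0 k_p)_{K_1(N)_F}$ is $F$-linear. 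I claim $T(g_0 k_p)_{K_1(N)_F} = \nu_F(T(p)_{K_1(N)})$. Indeed, for any right $K_1(N)$-invariant $\phi$, both $T(g_0 k_p)_{K_1(N)_F} \phi$ and $T(p)_{K_1(N)} \phi$ are sums of right-translates $\phi(\cdot \gamma_i)$, where $\{\gamma_i\}$ is a system of representatives of $K_1(N)_F g_0 k_p K_1(N)_F / K_1(N)_F$ (resp.\ of $K_1(N) g_0 K_1(N) / K_1(N)$); both quotients have cardinality $p+1$. The natural map $K_1(N)_F \beta \mapsto K_1(N) \beta$ between these two finite sets is surjective because $K_1(N) \cdot K_1(N)_F g_0 k_p K_1(N)_F = K_1(N) g_0 K_1(N)$ (a short cardinality count using $[K_1(N) : K_1(N) \cap g_0 K_1(N) g_0^{-1}] = p+1$), and surjectivity between finite sets of equal cardinality forces bijectivity. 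A common choice of representatives thus identifies the two sums, so the two operators agree on $K_1(N)$-invariants, and hence everywhere by $F$-linearity.

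Finally, since $k_p$ normalizes $K_1(N)_F$, one has $K_1(N)_F g_0 K_1(N)_F \cdot k_p = K_1(N)_F g_0 k_p K_1(N)_F$, and right-multiplying a system of right $K_1(N)_F$-coset representatives by $k_p$ yields the Hecke-algebra identity $T(g_0 k_p)_{K_1(N)_F} = T(g_0)_{K_1(N)_F} \cdot T(k_p)_{K_1(N)_F}$. Since $T(k_p)_{K_1(N)_F}$ coincides with the Galois action $\sigma_p$ by the discussion preceding the lemma, this gives (\ref{nuF Tp}). The identity (\ref{nuF Tpp}) for $T(p,p)$ is proved by the same argument applied to $g_1 = \begin{pmatrix} \varpi_p & 0 \\ 0 & \varpi_p \end{pmatrix}$ and a representative $k_{p^2} \in K_1(N)$ of $\sigma_p^2$, using $\det(g_1) = \varpi_p^2$ and checking that $\det(g_1 k_{p^2}) \in \Q_{>0} \cdot U_F$.

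The main technical hurdle is the double-coset bookkeeping---in particular, identifying the correct twist $k_p$ by tracking that $\varpi_p$ represents $\sigma_p^{-1}$ in $\Q_{>0} \backslash \A_f^\times / U_F$, so that multiplying by $k_p$ (whose determinant represents $\sigma_p$) cancels this shift and produces an $F$-defined correspondence. Once the element $g_0 k_p$ is correctly pinned down, the $F$-linearity of the resulting operator together with the coset bijection forces the comparison with $\nu_F(T(p)_{K_1(N)})$ on the whole space.
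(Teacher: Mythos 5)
The paper itself offers no proof of this lemma—it is imported verbatim from Lemma 13 of \cite{brunault:LEF}—so there is no internal argument to compare against; judged on its own, your adelic double-coset proof is essentially correct and is the natural one. The bookkeeping is right: $\varpi_p$ represents $\sigma_p^{-1}$ in $\Q_{>0}\backslash \A_f^\times/U_F$, so with $\det(k_p)\equiv p$ one gets $\det(g_0k_p)\in\Q_{>0}\cdot U_F$, the criterion of \S\ref{hecke base change} makes $T(g_0k_p)_{K_1(N)_F}$ an $F$-linear operator, and comparing the two $F$-linear operators on the $F$-spanning subspace of right $K_1(N)$-invariant forms is then legitimate; likewise, since $K_1(N)_F$ is normal in $K_1(N)$, the factorization $T(g_0k_p)_{K_1(N)_F}=T(g_0)_{K_1(N)_F}\cdot T(k_p)_{K_1(N)_F}$ and the identification of $T(k_p)_{K_1(N)_F}$ with $\sigma_p$ close the argument, and the $T(p,p)$ case is immediate because $K_1(N)_F\,g_1k_{p^2}\,K_1(N)_F$ is a single coset. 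The one place where your justification is thinner than it should be is the coset comparison: the equality $K_1(N)\cdot K_1(N)_F g_0k_p K_1(N)_F=K_1(N)g_0K_1(N)$ and the count $|K_1(N)_Fg_0k_pK_1(N)_F/K_1(N)_F|=p+1$ do not follow from a cardinality count alone; they use the hypothesis $p\nmid Nm$ in an essential way, namely that the determinant condition defining $K_1(N)_F$ is vacuous at $p$, so that $K_1(N)_F$ splits as $\GL_2(\Z_p)$ times a group supported away from $p$ and $k_p$ may be chosen with trivial $p$-component. With that splitting, the double coset factors as the local coset $\GL_2(\Z_p)\,\mathrm{diag}(p,1)\,\GL_2(\Z_p)$ (which has $p+1$ left cosets) times a single coset away from $p$; this gives simultaneously the cardinality, the surjectivity (hence bijectivity) of your coset map, and the commutation of $T(g_0)_{K_1(N)_F}$ with $\sigma_p$, which removes any ambiguity about the order of composition. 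Spelling out this local-at-$p$ structure, and stating explicitly that the remaining sign of the twist ($\sigma_p$ versus $\sigma_p^{-1}$) depends only on the fixed normalization of $\hat{\Z}^\times/U_F\cong G$, would make the proof complete; there is no gap of substance.
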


Now let $f$ be a newform of weight $2$ on $\Gamma_1(N)$. Fix an embedding $\sigma : K_f \hookrightarrow \CC$ and a character $\chi \in \hat{G}$, and let $\pi (f^\sigma \otimes \chi)$ be the automorphic representation of $\GL_2(\A_f)$ associated to the newform $f^\sigma \otimes \chi$. We have $\pi (f^\sigma \otimes \chi) \cong \pi(f^\sigma) \otimes (\tilde{\chi} \circ \det)$, where $\tilde{\chi} : \A_f^\times/\Q_{>0} \to \CC^\times$ denotes the adèlization of $\chi$, sending $\varpi_p$ to $\chi(p)$ for every prime $p$ not dividing $m$. Since $\pi(f^\sigma) \in \Pi(K_1(N))$, we have $\pi(f^\sigma \otimes \chi) \in \Pi(K_1(N)_F)$.

The following lemma was proved in \cite[Lemma 15]{brunault:LEF}.

\begin{lem}\label{lem theta}
Let $\sigma : K_f \hookrightarrow \CC$ and $\chi \in \hat{G}$. For any prime $p$ not dividing $Nm$, the operator $T(p)_{K_1(N)_F}$ (resp. $T(p,p)_{K_1(N)_F}$) acts as $\sigma(a_p) \chi(p)$ (resp. $\chi(p)^2$) on $\Omega(\pi(f^\sigma \otimes \chi))^{K_1(N)_F}$.
\end{lem}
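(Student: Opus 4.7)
The plan is to combine the twist relation $\pi(f^\sigma \otimes \chi) \cong \pi(f^\sigma) \otimes (\tilde{\chi} \circ \det)$ recalled before the statement with the standard Hecke eigenvalues on the untwisted representation, carefully tracking how the determinant character enters. The key observation is that, by abelian class field theory, $U_F$ coincides with the kernel of $\tilde{\chi}$ restricted to $\hat{\Z}^\times$, so $\tilde{\chi} \circ \det$ is trivial on $K_1(N)_F$. Consequently, for any $g \in \GL_2(\A_f)$, each left coset $g_i K_1(N)_F$ in a decomposition $K_1(N)_F \, g \, K_1(N)_F = \bigsqcup_i g_i K_1(N)_F$ satisfies $\tilde{\chi}(\det(g_i)) = \tilde{\chi}(\det(g))$. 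It follows that $\tilde{T}(g)_{K_1(N)_F}$ acts on $\Omega(\pi(f^\sigma \otimes \chi))^{K_1(N)_F}$ as $\tilde{\chi}(\det(g))$ times its action on $\Omega(\pi(f^\sigma))^{K_1(N)_F}$.

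To conclude, I would compute the Hecke eigenvalues on the untwisted space. Since $p \nmid N$, the local component $\pi(f^\sigma)_p$ is an unramified principal series, and both $K_1(N)$ and $K_1(N)_F$ share the $p$-component $\GL_2(\Z_p)$, so the spherical Hecke action at $p$ is identical at both levels. By the Satake isomorphism, $T(p)$ acts on the spherical vector as $\sigma(a_p)$ and $T(p,p)$ acts as the central character of $\pi(f^\sigma)$ at $\varpi_p$, which in the convention of \cite{brunault:LEF} is trivial. Applying the twist factors $\tilde{\chi}(\varpi_p) = \chi(p)$ for $g = \begin{pmatrix} \varpi_p & 0 \\ 0 & 1 \end{pmatrix}$ and $\tilde{\chi}(\varpi_p^2) = \chi(p)^2$ for $g = \begin{pmatrix} \varpi_p & 0 \\ 0 & \varpi_p \end{pmatrix}$ then yields the announced eigenvalues $\sigma(a_p)\chi(p)$ and $\chi(p)^2$.

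The main technical delicacy is the uniform-determinant property of the coset representatives, which depends precisely on the definition of $K_1(N)_F$. This bookkeeping has already been carried out in \cite[Lemma 15]{brunault:LEF} and can be invoked directly.
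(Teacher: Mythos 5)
The paper itself does not prove this lemma: it simply quotes \cite[Lemma 15]{brunault:LEF}, which is also how you close your argument, so at the level of logical dependencies your proposal and the paper coincide; the added value of your sketch is that it reconstructs the intended argument. That reconstruction is essentially right: identify $\Omega(\pi(f^\sigma\otimes\chi))$ with $\Omega(\pi(f^\sigma))$ twisted by $\tilde{\chi}\circ\det$, note that $\tilde{\chi}\circ\det$ is trivial on $K_1(N)_F$ (for this you only need $\det(K_1(N)_F)\subset U_F\subset\ker\bigl(\tilde{\chi}|_{\hat{\Z}^\times}\bigr)$; your claim that $U_F$ \emph{equals} this kernel holds only for faithful $\chi$, but the containment is all that is used), so the $K_1(N)_F$-invariants of the two representations match, every left-coset representative of a double coset of $g$ has the same value of $\tilde{\chi}\circ\det$ as $g$, and the computation reduces to the spherical place $p\nmid Nm$, where indeed $\GL_2(\Z_p)\subset K_1(N)_F$.

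Two points in your final step are glossed over. The first is the bookkeeping of $\chi$ versus $\overline{\chi}$: the correspondence $T(g)$ is built from $g^{-1}$, and in the proof of Proposition \ref{pro decomp Omegaf} the Galois element $\sigma_p$ acts by $\overline{\chi}(p)$ while the asserted eigenvalue involves $\chi(p)$; you rightly delegate this to the reference. The second is more substantive: you assert that the central character of $\pi(f^\sigma)$ at $\varpi_p$ ``is trivial''. The central element $\left(\begin{smallmatrix}\varpi_p&0\\0&\varpi_p\end{smallmatrix}\right)$ acts on $\overline{M}_{K_1(N)}$ through a diamond operator, hence on $\omega_{f^\sigma}$ by the nebentypus value $\sigma(\varepsilon(p))^{\pm 1}$, which is $1$ only when $f$ has trivial nebentypus (the situation of \cite{brunault:LEF}, where $f$ is attached to an elliptic curve over $\Q$). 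So the untwisted $T(p,p)$-eigenvalue is not automatically $1$ for a general newform on $\Gamma_1(N)$, and this is precisely the content your appeal to ``the convention of \cite{brunault:LEF}'' is carrying: taken on its own, your sketch would need either the hypothesis $\varepsilon=1$ or the factor $\sigma(\varepsilon(p))$ tracked through the $T(p,p)$ statement. Apart from this one unjustified step, the approach is the natural one and matches what the cited lemma establishes.
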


\subsection{Modularity of endomorphism algebras} \label{modular endo}

In this section, we show that every endomorphism of $A_f$ defined over an abelian extension of $\Q$ is of automorphic origin. This is the main technical ingredient in order to apply Beilinson's theorem on modular curves. That all endomorphisms of $A_f$ over $\Qb$ are modular was proved by Ribet \cite{ribet:twists} using a construction of Shimura \cite{shimura:modular_jacobian} (see also the work of Momose \cite{momose}, Brown-Ghate \cite{brown-ghate}, Ghate-Gonz{\'a}lez-Jim{\'e}nez-Quer \cite{ghate-gonzalez-quer}, Gonz{\'a}lez-Lario \cite{gonzalez-lario}). Our approach is different in that we study endomorphisms defined over a given abelian extension of $\Q$. Moreover, our statement and proof are completely automorphic and don't involve explicit computation of Hecke operators.

In this section, we fix a finite abelian extension $F$ of $\Q$. Let $\Omega_{N,F} = \Omega^1(X_1(N)_F) \cong \Omega^1(X_1(N))_F$. In order to ease notations, let $\TT_{N,F} = \TT_{K_1(N)_F} \subset \End_\Q(\Omega_{N,F})$ and $\TT'_{N,F} = \TT'_{K_1(N)_F} \subset \End_F(\Omega_{N,F})$. By (\ref{decompT}) we have an isomorphism $\TT_{N,F} \cong \TT'_{N,F}\{G\}$.

\begin{lem}
There is a commutative diagram
\begin{equation}\label{cd EndJ1N}
\begin{tikzcd}
\TT'_{N,F} \arrow{r}{\rho'} \arrow[hook]{d} & \End_F(J_1(N))^{\mathrm{op}} \otimes \Q \arrow[hook]{r} \arrow[hook]{d} & \End_F(\Omega_{N,F}) \arrow[hook]{d} \\
\TT_{N,F} \arrow{r}{\rho} & \End_F(J_1(N))^{\mathrm{op}} \otimes \Q\{G\} \arrow[hook]{r} & \End_\Q(\Omega_{N,F})
\end{tikzcd}
\end{equation}
such that for any $T \in \TT'_{N,F}$, we have $\rho'(T)^* = T$ and for any $\sigma \in G$, we have $\rho(\sigma) = \sigma$.
\end{lem}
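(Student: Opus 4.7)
The plan is to lift the construction to the level of Hecke correspondences and then descend. First I would define $\rho$ on $\tilde{\TT}_{N,F}$: given a Hecke correspondence $\tilde{T}(g) = (\overline{M}_{K_F \cap g^{-1} K_F g}, \pi, \pi')$, send it to the endomorphism $\pi'_* \circ \pi^*$ of $(J_1(N))_F$ obtained by Picard pullback and Albanese pushforward, the convention being chosen so that the induced action on $\Cot_0((J_1(N))_F)$ matches $T(g) = \pi'_* \circ \pi^*$ on $\Omega^1(\overline{M}_{K_F})$. If $\tilde{T}(g)$ lies in $\tilde{\TT}'_{N,F}$, i.e.~if it is defined over $F$, then $\pi$ and $\pi'$ are $F$-morphisms, so $\rho(\tilde{T}(g)) \in \End_F(J_1(N)) \otimes \Q$; in general $\rho(\tilde{T}(g))$ is only a $\Q$-endomorphism of $(J_1(N))_F$, but Lemma \ref{EndK ML} identifies $\End_\Q((J_1(N))_F) \otimes \Q$ with $\End_F(J_1(N)) \otimes \Q\{G\}$, which is the target appearing in the diagram.

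Second, I would show that $\rho$ factors through $\TT_{N,F}$ and $\rho'$ through $\TT'_{N,F}$. The composite $\tilde{\TT}_{N,F} \to \End_\Q((J_1(N))_F) \otimes \Q \to \End_\Q(\Omega_{N,F})$ is, by the compatibility arranged in the first step, the tautological action of correspondences on differentials, which factors through $\TT_{N,F}$ by definition of the latter. The required factorization of $\rho$ itself then follows once we know that the map $\End_\Q((J_1(N))_F) \otimes \Q \hookrightarrow \End_\Q(\Omega_{N,F})$ is injective; this is the classical fact that in characteristic zero an endomorphism of an abelian variety is determined by its action on the cotangent space at the origin, together with the Abel-Jacobi identification $\Omega_{N,F} \cong \Cot_0((J_1(N))_F)$. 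The same injectivity yields the two right-hand vertical arrows of the diagram.

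Third, I would verify the two asserted identities. The equality $\rho'(T)^* = T$ is built into the construction of $\rho'$ via the Abel-Jacobi compatibility above. For $\rho(\sigma) = \sigma$, I would use the observation made in \S\ref{hecke base change} that under the isomorphism $K/K_F \cong G$ an element $\sigma \in G$ is represented by $g \in K = K_1(N)$, and that the Hecke correspondence $T(g)_{K_F}$ is then simply the automorphism of $\overline{M}_{K_F} \cong X_1(N) \times_\Q \Spec F$ induced by $\sigma$ acting on the second factor. Its image under $\rho$ is therefore the corresponding Galois automorphism of $(J_1(N))_F$, which is precisely $\sigma$ viewed inside $\End_F(J_1(N)) \otimes \Q\{G\}$ via Lemma \ref{EndK ML}.

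Commutativity of the diagram is then essentially formal: the right-hand squares encode $\rho'(T)^* = T$ and $\rho(\sigma) = \sigma$ together with the compatibility of the twisted group algebra structures, while the left-hand square commutes because $\rho$ restricted to $\TT'_{N,F}$ coincides with $\rho'$ by construction. The main subtlety will be getting the Picard/Albanese convention right in the very first step, so that $\rho'(T)^* = T$ holds without an unwanted transposition; once that is pinned down, the rest amounts to unwinding definitions.
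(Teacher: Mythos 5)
Your proposal is correct and takes essentially the same route as the paper: Hecke correspondences are lifted to endomorphisms of $J_1(N)_F$ via Jacobian (Albanese/Picard) functoriality, the identification $\Cot_0(J_1(N)_F)\cong\Omega_{N,F}$ together with the faithfulness of the cotangent map gives the factorization through $\TT'_{N,F}$, and the Galois part is handled by the twisted group ring structure. The only (harmless) difference is one of order: the paper first defines $\rho'$ on the correspondences defined over $F$ and then extends $\{G\}$-linearly using $\TT_{N,F}\cong\TT'_{N,F}\{G\}$, whereas you define the map on all correspondences at once and then decompose the resulting semilinear endomorphisms.
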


\begin{proof}
The cotangent space of $J_1(N)_F$ at the origin is given by $\Omega^1(J_1(N))_F$ and can be identified canonically with $\Omega_{N,F}$. We define the map $\End_F(J_1(N)) \to \End_F(\Omega_{N,F})$ by sending an endomorphism $\varphi$ of $J_1(N)_F$ to its cotangent map $\Cot(\varphi)$ at the origin. If $\tilde{T}$ is a finite correspondence on $X_1(N)_F$ defined over $F$, and $T$ is the canonical image of $\tilde{T}$ in $\End_F (\Omega_{N,F})$, then by definition of the Jacobian variety, there is a unique endomorphism $\varphi(\tilde{T}) \in \End_F (J_1(N)) \otimes \Q$ such that the $\Cot(\varphi(\tilde{T})) = T$. In particular, the restriction of the map $\tilde{T} \mapsto \varphi(\tilde{T})$ to $\tilde{\TT}'_{N,F}$ factors through $\TT'_{N,F}$. This defines the map $\rho'$ of (\ref{cd EndJ1N}). We define $\rho$ by extending linearly $\rho'$ using $\TT_{N,F} \cong \TT'_{N,F}\{G\}$.
\end{proof}

We next give a criterion for an endomorphism of $J_1(N)$ to induce an endomorphism of $A_f$. Let $\pi : J_1(N) \to A_f$ denote the canonical projection, and let $\pi_F : J_1(N)_F \to (A_f)_F$ be its base change to $F$. Let $\Omega_{f,F} = \Omega^1(A_f)_F$. We may and will identify $\Omega_{f,F}$ with its image in $\Omega_{N,F}$ by means of the canonical injection $\pi_F^* : \Omega^1(A_f)_F \to \Omega^1(J_1(N))_F$. The following result is classical.

\begin{lem}\label{lem EndAf}
Let $T$ be an element of $\TT'_{N,F}$. Then $\rho'(T)$ induces an element of $\End_F(A_f) \otimes \Q$ if and only if $T$ leaves stable $\Omega_{f,F}$.
\end{lem}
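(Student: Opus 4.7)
The plan is to exploit the fact that, in characteristic zero, a non-zero morphism of abelian varieties has non-zero differential at the origin: by translation invariance, a morphism of abelian varieties whose differential vanishes at $0$ is locally constant on a connected group variety, hence zero. Combined with the identity $\rho'(T)^* = T$ from the previous lemma, which simply says that $T$ is the cotangent action of $\rho'(T)$ on $\Omega_{N,F}$, this reduces the statement to a block-matrix computation in an isogeny decomposition of $J_1(N)_F$.

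For the direction ``$\Rightarrow$'', I would suppose $\rho'(T)$ comes from an endomorphism $\bar T \in \End_F(A_f) \otimes \Q$, i.e.\ that $\pi_F \circ \rho'(T) = \bar T \circ \pi_F$ in $\Hom_F(J_1(N), A_f) \otimes \Q$. Passing to cotangent maps yields $T \circ \pi_F^* = \pi_F^* \circ \Cot(\bar T)$, and since $\pi_F^*$ identifies $\Omega^1(A_f)_F$ with $\Omega_{f,F}$ inside $\Omega_{N,F}$, this says precisely that $T$ stabilizes $\Omega_{f,F}$.

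For the direction ``$\Leftarrow$'', I would invoke Poincar\'e complete reducibility over $F$ to write $J_1(N)_F \sim (A_f)_F \times C$ in the isogeny category, for some complementary abelian variety $C/F$ such that $\pi_F$ becomes projection onto the first factor. Correspondingly $\Omega_{N,F}$ decomposes as $\Omega_{f,F} \oplus \Omega^1(C)_F$, and $\rho'(T)$ takes the block form $\bigl(\begin{smallmatrix} \alpha & \beta \\ \gamma & \delta \end{smallmatrix}\bigr)$ with $\beta \in \Hom_F(C,(A_f)_F) \otimes \Q$. Taking cotangent transposes the blocks, so the assumption that $T$ preserves $\Omega_{f,F}$ translates to $\Cot(\beta) = 0$. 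By the faithfulness of $\Cot$ recalled in the first paragraph, this forces $\beta = 0$. Therefore $\rho'(T)$ preserves the factor $C$, and so descends through $\pi_F$ to the endomorphism $\alpha \in \End_F(A_f) \otimes \Q$, which by construction induces $\rho'(T)$.

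The only conceptual ingredient beyond the preceding lemma is Poincar\'e complete reducibility over $F$; the remainder is a routine dictionary between morphisms of abelian varieties and their cotangent maps, and I do not anticipate any serious obstacle.
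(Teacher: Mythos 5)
Your argument is correct. It proves the same statement by a slightly different packaging of the same underlying fact: the paper does not split $J_1(N)_F$ at all, but works directly with the presentation $A_f = J_1(N)/I_f J_1(N)$, base changes the exact sequence $0 \to \Lie(I_f J_1(N)) \to \Lie(J_1(N)) \to \Lie(A_f) \to 0$ to $F$, dualizes it to $0 \to \Omega_{f,F} \to \Omega_{N,F} \to \Omega^1(I_f J_1(N))_F \to 0$, and observes that $\rho'(T)$ descends to $(A_f)_F$ if and only if its differential $D$ preserves $\Lie(I_f J_1(N))_F$, which by the duality between $T$ and $D$ is exactly the condition that $T$ preserves $\Omega_{f,F}$. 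Your route replaces the kernel exact sequence by a Poincar\'e-reducibility splitting $J_1(N)_F \sim (A_f)_F \times C$ and a block-matrix computation; this costs you one extra input (complete reducibility, which the paper does not need) but has the merit of making explicit the characteristic-zero faithfulness of $\Cot$ on $\Hom \otimes \Q$ (a homomorphism with vanishing differential is zero), which is precisely the point the paper leaves implicit in the "if and only if" between descent of $\rho'(T)$ and $D$ preserving $\Lie(I_f J_1(N))_F$. Your "$\Rightarrow$" direction is the same trivial functoriality in both treatments, and your block bookkeeping (contravariance transposing the blocks, so stability of $\Omega_{f,F}$ kills the $\Hom_F(C,(A_f)_F)$-component) is accurate; only the final phrase should read that $\rho'(T)$ induces $\alpha$, rather than the reverse.
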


\begin{proof}
Since $A_f=J_1(N)/I_f J_1(N)$, we have an exact sequence
\begin{equation*}
0 \to \Lie (I_f J_1(N)) \to \Lie (J_1(N)) \to \Lie (A_f) \to 0.
\end{equation*}
Base changing to $F$, we get an exact sequence
\begin{equation*}
0 \to \Lie (I_f J_1(N))_F \to \Lie (J_1(N))_F \to \Lie (A_f)_F \to 0.
\end{equation*}
The dual exact sequence is
\begin{equation*}
0 \to \Omega_{f,F} \to \Omega_{N,F} \to \Omega^1(I_f J_1(N))_F \to 0.
\end{equation*}
Let $D \in \End_F (\Lie (J_1(N))_F)$ be the differential of $\rho'(T)$ at the origin. The operators $T$ and $D$ are dual to each other. Then $\rho'(T)$ induces an endomorphism of $(A_f)_F$ if and only if $D$ leaves stable $\Lie (I_f J_1(N))_F$, which means exactly that $T$ leaves stable $\Omega_{f,F}$.
\end{proof}

As a next step, we determine how $A_f$ interacts with the Hecke algebra. Fix an embedding of $\Qb$ into $\CC$. For any $\sigma : K_f \hookrightarrow \CC$, the differential form $\omega_{f^{\sigma}}=2\pi i f^{\sigma}(z) dz$ defines an element of $\Omega^1(X_1(N)) \otimes \Qb$, and the elements $(\omega_{f^\sigma})_{\sigma : K_f \hookrightarrow \CC}$ form a $\Qb$-basis of $\Omega^1(A_f) \otimes \Qb$. By the normal basis theorem, the $\Qb$-vector space $F \otimes \Qb$ splits into $\Qb$-lines $(L_\chi)_{\chi \in \hat{G}}$ such that $\sigma \in G$ acts as $\overline{\chi}(\sigma)$ on $L_\chi$.

\begin{pro}\label{pro decomp Omegaf}
We have a direct sum decomposition 
\begin{equation}\label{eq decomp Omegaf}
\Omega_{f,F} \otimes_{\Q} \Qb = \bigoplus_{\substack{\sigma : K_f \hookrightarrow \CC \\ \chi \in \hat{G}}}  \omega_{f^\sigma} \cdot L_\chi
\end{equation}
and for every $\sigma : K_f \hookrightarrow \CC$ and $\chi \in \hat{G}$, we have $\omega_{f^\sigma} \cdot L_\chi \subset \Omega(\pi(f^\sigma \otimes \chi))$.
\end{pro}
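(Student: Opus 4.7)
The plan is to treat the two assertions separately. For the direct sum decomposition (\ref{eq decomp Omegaf}), I will simply combine two elementary decompositions: $\Omega_{f,F} \otimes_\Q \Qb \cong (\Omega^1(A_f) \otimes_\Q \Qb) \otimes_{\Qb} (F \otimes_\Q \Qb)$. The left factor has the $\Qb$-basis $(\omega_{f^\sigma})_{\sigma : K_f \hookrightarrow \CC}$ recalled just before the statement, and the right factor decomposes as $\bigoplus_{\chi \in \hat G} L_\chi$ by the normal basis theorem. Tensoring these two decompositions yields (\ref{eq decomp Omegaf}) immediately.

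The substantive content is the second assertion, namely that each line $\omega_{f^\sigma} \cdot L_\chi$ is contained in the $\pi(f^\sigma \otimes \chi)$-isotypic component. My plan is to identify the line by matching a Hecke eigenvalue and then invoking the multiplicity-one decomposition (\ref{dec Omega1}). Pick a nonzero $\ell \in L_\chi$ and set $x = \omega_{f^\sigma} \otimes \ell \in \Omega_{N,F} \otimes \Qb$. For any prime $p \nmid Nm$, I apply both sides of the identity (\ref{nuF Tp}), namely $\nu_F(T(p)_{K_1(N)}) = T(p)_{K_1(N)_F} \cdot \sigma_p$, to the element $x$. The left-hand side evaluates to $\sigma(a_p) x$, since $\omega_{f^\sigma}$ is a $T(p)_{K_1(N)}$-eigenform with eigenvalue $\sigma(a_p)$ and $\nu_F$ acts trivially on the $F$-factor. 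The right-hand side equals $T(p)_{K_1(N)_F}(\omega_{f^\sigma} \otimes \sigma_p\ell) = \overline{\chi}(p) \cdot T(p)_{K_1(N)_F}(x)$ by the defining property of $L_\chi$. Comparing yields
\begin{equation*}
T(p)_{K_1(N)_F}(x) = \sigma(a_p)\chi(p) \cdot x \qquad (p \nmid Nm).
\end{equation*}

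Finally, I decompose $x = \sum_{\pi \in \Pi(K_1(N)_F)} x_\pi$ with $x_\pi \in \Omega(\pi)^{K_1(N)_F}$ using (\ref{dec Omega1}). For $p \nmid Nm$, the operator $T(p)_{K_1(N)_F}$ is central in $\TT_{K_1(N)_F}$ and acts on each $\Omega(\pi)^{K_1(N)_F}$ by a scalar $\lambda_\pi(p)$; by Lemma \ref{lem theta} we have $\lambda_{\pi(f^\sigma \otimes \chi)}(p) = \sigma(a_p)\chi(p)$. Since the components $\Omega(\pi)^{K_1(N)_F}$ are pairwise non-isomorphic simple $\TT_{K_1(N)_F} \otimes \Qb$-modules, the eigenvalue equation above forces $x_\pi = 0$ whenever $\lambda_\pi(p) \neq \sigma(a_p)\chi(p)$ for some $p \nmid Nm$; strong multiplicity one for $\GL_2$ then identifies the unique surviving $\pi$ as $\pi(f^\sigma \otimes \chi)$, giving $x \in \Omega(\pi(f^\sigma \otimes \chi))$ as required.

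The only delicate step is the clean bookkeeping of the twist factor $\sigma_p$ relating the classical Hecke eigenvalue $\sigma(a_p)$ of $\omega_{f^\sigma}$ under $T(p)_{K_1(N)}$ to the eigenvalue of $x$ under $T(p)_{K_1(N)_F}$; once Lemma \ref{lem nuF} is combined with the $G$-eigenspace description of $L_\chi$, the decomposition (\ref{dec Omega1}) and strong multiplicity one finish the job.
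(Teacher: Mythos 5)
Your proposal is correct and follows essentially the same route as the paper: the tensor decomposition $\Omega_{f,F}\otimes\Qb \cong \Omega^1(A_f)\otimes F\otimes\Qb$ with the normal basis theorem for (\ref{eq decomp Omegaf}), then the eigenvalue computation via Lemma \ref{lem nuF}, and finally Lemma \ref{lem theta} together with multiplicity one applied to the decomposition (\ref{dec Omega1}). The only (harmless) difference is that the paper also records the $T(p,p)_{K_1(N)_F}$-eigenvalue $\chi(p)^2$, so that the full unramified local components are matched before citing the multiplicity one theorems, whereas you invoke the classical strong multiplicity one statement for newforms in terms of the $T(p)$-eigenvalues alone, which suffices here.
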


\begin{proof}
The decomposition (\ref{eq decomp Omegaf}) follows from the equality $\Omega_{f,F} \otimes \Qb = \Omega^1(A_f) \otimes F \otimes \Qb$. Let $L=\omega_{f^\sigma} \cdot L_\chi$. Let $p$ be a prime not dividing $Nm$. We know that $T(p)_{X_1(N)}(\omega_{f^\sigma})=\sigma(a_p) \omega_{f^\sigma}$. It follows that $\nu_F(T(p)_{X_1(N)})$ acts as $\sigma(a_p)$ on $L$. Moreover $\sigma_p$ acts as $\overline{\chi}(p)$ on $L$. By Lemma \ref{lem nuF}, we deduce that $T(p)_{X_1(N)_F}$ acts as $\sigma(a_p) \chi(p)$ on $L$. Similarly $T(p,p)_{X_1(N)_F}$ acts as $\chi(p)^2$ on $L$. The result now follows from Lemma \ref{lem theta} together with the multiplicity one theorems \cite{piatetski-shapiro}.
\end{proof}

\begin{pro}\label{pro ef}
There exists an idempotent $e_f \in \TT_{N,F}$ whose image is precisely $\Omega_{f,F}$.
\end{pro}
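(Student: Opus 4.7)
The plan is to construct an idempotent $\epsilon$ in the $\Q$-Hecke algebra $\TT_{K_1(N)}$ whose image in $\Omega^1(X_1(N))$ is the cotangent space $\Omega^1(A_f)$, and then transport it to $\TT_{N,F}$ via the base-change map $\nu_F$ from Lemma \ref{lem nuF}. By Lemma \ref{TK semisimple} applied to $K = K_1(N)$, we have $\TT_{K_1(N)} \otimes_\Q \Qb \cong \prod_{\pi \in \Pi(K_1(N))} \End_{\Qb}(\Omega(\pi)^{K_1(N)})$; since every Galois conjugate $f^\sigma$ is a newform of level $N$, each $\Omega(\pi(f^\sigma))^{K_1(N)} = \Qb \cdot \omega_{f^\sigma}$ is one-dimensional. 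The central idempotent of $\TT_{K_1(N)} \otimes \Qb$ supported on the Galois-stable set $\{\pi(f^\sigma) : \sigma : K_f \hookrightarrow \CC\}$ is therefore Galois-invariant, and descends to an element $\epsilon \in \TT_{K_1(N)}$ satisfying $\epsilon(\Omega^1(X_1(N))) = \Omega^1(A_f)$.

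Since $\nu_F$ is a ring homomorphism, $\nu_F(\epsilon)$ is automatically an idempotent in $\End_F(\Omega_{N,F})$ whose image is $\Omega^1(A_f) \otimes_\Q F = \Omega_{f,F}$. The remaining — and main — obstacle is to show that $\nu_F(\epsilon)$ actually lies in the Hecke subalgebra $\TT_{N,F}$, rather than merely in $\End_F(\Omega_{N,F})$.

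To this end, fix $m \geq 1$ with $F \subset \Q(\zeta_m)$, and let $\TT^{\mathrm{good}} \subset \TT_{K_1(N)}$ denote the subalgebra generated by $T(p)_{K_1(N)}$ and $T(p,p)_{K_1(N)}$ for primes $p \nmid Nm$. These operators are central and act by scalars on each $\Omega(\pi)^{K_1(N)}$, so $\TT^{\mathrm{good}} \otimes \Qb$ maps into $Z(\TT_{K_1(N)} \otimes \Qb) = \prod_\pi \Qb$. By strong multiplicity one for $\GL_2$ cuspidal representations, distinct $\pi \in \Pi(K_1(N))$ are separated by their $(T(p), T(p,p))$-eigensystems at primes $p \nmid Nm$; a Lagrange-interpolation argument then builds the idempotents of $\prod_\pi \Qb$ inside the image, so the map is an isomorphism. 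Consequently $\epsilon \in \TT^{\mathrm{good}}$, and we may write $\epsilon = P\bigl(T(p_i)_{K_1(N)}, T(p_i,p_i)_{K_1(N)}\bigr)$ for some $\Q$-polynomial $P$ and finitely many primes $p_i \nmid Nm$. Applying $\nu_F$ and invoking Lemma \ref{lem nuF} yields
\begin{equation*}
\nu_F(\epsilon) = P\bigl(T(p_i)_{K_1(N)_F}\sigma_{p_i},\; T(p_i,p_i)_{K_1(N)_F}\sigma_{p_i}^2\bigr) \in \TT_{N,F},
\end{equation*}
so we may set $e_f := \nu_F(\epsilon)$, completing the construction.
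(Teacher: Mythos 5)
Your proof is correct, but it takes a genuinely different route from the paper's. The paper stays at the base-changed level: by Lemma \ref{TK semisimple} applied to $K_1(N)_F$, the algebra $\TT_{N,F}\otimes\Qb$ maps onto $\prod_{\pi}\End_{\Qb}(\Omega(\pi)^{K_1(N)_F})$, and Proposition \ref{pro decomp Omegaf} exhibits $\Omega_{f,F}\otimes\Qb$ as a sum of lines $\omega_{f^\sigma}\cdot L_\chi$ lying inside specific components $\Omega(\pi(f^\sigma\otimes\chi))^{K_1(N)_F}$; one can therefore choose a (generally non-central) idempotent over $\Qb$ with image $\Omega_{f,F}\otimes\Qb$ and then descend it to $\Q$-coefficients. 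You instead construct the classical central projector $\epsilon$ at level $N$ over $\Q$ and transport it by $\nu_F$, the whole difficulty being membership in $\TT_{N,F}$, which you settle by placing $\epsilon$ in the unital subalgebra generated by the good operators $T(p)$, $T(p,p)$ with $p\nmid Nm$ (strong multiplicity one plus interpolation) and then invoking Lemma \ref{lem nuF}; note that this last step also uses the fact from \S\ref{hecke base change} that each $\sigma_{p}$ acts on $\Omega_{N,F}$ through a correspondence $T(g)_{K_1(N)_F}$, hence lies in $\TT_{N,F}$ --- worth stating explicitly. What the paper's argument buys is brevity: Proposition \ref{pro decomp Omegaf} (itself proved with Lemmas \ref{lem nuF}, \ref{lem theta} and multiplicity one) already does the work, and no polynomial expression in good Hecke operators is needed. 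What your argument buys is independence from Proposition \ref{pro decomp Omegaf} and a finer conclusion: your $e_f=\nu_F(\epsilon)$ is $F$-linear, hence lies in $\TT'_{N,F}$, and is the base change of a $\Q$-rational projector at level $N$, which goes some way toward the expectation expressed in the Remark following the proposition concerning $\nu_F(e_f^{\mathrm{can}})$.
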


\begin{proof}
By Galois descent, it is sufficient to prove the existence of an idempotent $e_f \in \TT_{N,F} \otimes \Qb$ whose image is $\Omega_{f,F} \otimes \Qb$. This follows from Lemma \ref{TK semisimple} and Proposition \ref{pro decomp Omegaf}.
\end{proof}

\begin{remark}
Let $\iota : I_f J_1(N) \to J_1(N)$ be the canonical inclusion and consider the dual map $\iota^\vee : J_1(N) = J_1(N)^\vee \to (I_f J_1(N))^\vee$. Since the map $(\pi,\iota^\vee) : J_1(N) \to A_f \times (I_f J_1(N))^\vee$ is an isogeny, there exists a canonical projector $e_f^{\textrm{can}} \in \End_\Q(J_1(N)) \otimes \Q$ with image $A_f$. It seems reasonable to hope that $e_f^{\textrm{can}}$ belongs to the image of $\rho'$ in diagram (\ref{cd EndJ1N}), but I haven't tried to prove this.
\end{remark}

Now, let us consider the semisimple algebra $\TT_{f,F} =e_f \TT_{N,F} e_f$. It leaves stable $\Omega_{f,F}$, so that by Lemma \ref{lem EndAf}, we have an induced map $\rho_f : \TT_{f,F} \to \End_F(A_f)^{\mathrm{op}} \otimes \Q\{G\}$.

\begin{thm}\label{thm EndAf}
Assume $f$ doesn't have CM. Then the map $\rho_f : \TT_{f,F} \to \End_F(A_f)^{\mathrm{op}} \otimes \Q\{G\}$ is bijective. In particular, every endomorphism of $A_f$ defined over $F$ arises from $\TT_{N,F}$.
\end{thm}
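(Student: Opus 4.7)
The plan is to establish injectivity of $\rho_f$ by a faithfulness argument, and then conclude surjectivity by matching $\Q$-dimensions on both sides.

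For \emph{injectivity}: since $\TT_{N,F}$ is semisimple (Lemma~\ref{TK semisimple}) and acts faithfully on $\Omega_{N,F}$, the cut-down algebra $\TT_{f,F} = e_f \TT_{N,F} e_f$ acts faithfully on $e_f \Omega_{N,F} = \Omega_{f,F}$, yielding a natural injection $\TT_{f,F} \hookrightarrow \End_\Q(\Omega_{f,F})$. Independently, the cotangent embedding $\End_F(A_f) \otimes \Q \hookrightarrow \End_F(\Omega_{f,F})$, extended $G$-equivariantly (using that distinct automorphisms of $F$ are linearly independent), gives an injection $\End_F(A_f) \otimes \Q\{G\} \hookrightarrow \End_F(\Omega_{f,F})\{G\} = \End_\Q(\Omega_{f,F})$. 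By construction of $\rho_f$ via diagram~\eqref{cd EndJ1N}, the composition $\TT_{f,F} \xrightarrow{\rho_f} \End_F(A_f) \otimes \Q\{G\} \hookrightarrow \End_\Q(\Omega_{f,F})$ coincides with the natural inclusion, so $\rho_f$ is injective.

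The \emph{target dimension} is computed by Theorem~\ref{thm BfF}(b): $\dim_\Q \End_F(A_f) \otimes \Q = [k:\Q][K_f:k]^2 = [K_f:\Q][K_f:k]$, hence $\dim_\Q \End_F(A_f) \otimes \Q\{G\} = [F:\Q][K_f:\Q][K_f:k]$. For the \emph{source dimension}, Lemma~\ref{TK semisimple} gives
\begin{equation*}
\TT_{f,F} \otimes \Qb \cong \prod_\pi \End_{\Qb}\bigl(e_f \Omega(\pi)^{K_1(N)_F}\bigr),
\end{equation*}
where only those $\pi$ for which $\Omega_{f,F} \otimes \Qb$ meets $\Omega(\pi)^{K_1(N)_F}$ contribute. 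By Proposition~\ref{pro decomp Omegaf}, these are exactly the representations $\pi(f^\sigma \otimes \chi)$; since the $\sim$-relation has $[k:\Q]$ classes on $\Hom(K_f,\CC)$ and $|\hat G| = [F:\Q]$, there are $[k:\Q][F:\Q]$ distinct such $\pi$. For each, the uniqueness in Lemma~\ref{emb_equiv_F} (which crucially uses the no-CM hypothesis) shows that the pairs $(\sigma',\chi')$ with $\pi(f^{\sigma'} \otimes \chi') \cong \pi$ form a single $\sim$-class of size $[K_f:k]$, contributing $[K_f:k]$ distinct summands $\omega_{f^{\sigma'}} L_{\chi'}$ of $\Omega(\pi)^{K_1(N)_F}$. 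Hence $\dim_{\Qb} e_f \Omega(\pi)^{K_1(N)_F} = [K_f:k]$, and
\begin{equation*}
\dim_\Q \TT_{f,F} = [k:\Q][F:\Q][K_f:k]^2 = [F:\Q][K_f:\Q][K_f:k],
\end{equation*}
matching the target. Combined with injectivity, this yields bijectivity.

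The main obstacle, in my view, is pinning down the source dimension to be \emph{exactly} $[K_f:k]$ in each simple component: the lower bound $\dim_\Qb e_f \Omega(\pi)^{K_1(N)_F} \geqslant [K_f:k]$ is immediate from Proposition~\ref{pro decomp Omegaf}, but the matching upper bound relies on the uniqueness in Lemma~\ref{emb_equiv_F} (and hence on the no-CM hypothesis) to guarantee that no additional pairs $(\sigma',\chi')$ can conspire to land in the same automorphic component.
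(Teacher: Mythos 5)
Your argument is correct and is essentially the paper's own proof: injectivity from the faithful action of $\TT_{f,F}$ on $\Omega_{f,F}$, and surjectivity by the same dimension count, invoking Lemma \ref{TK semisimple}, Proposition \ref{pro decomp Omegaf} and the uniqueness statement of Lemma \ref{emb_equiv_F} (where the no-CM hypothesis enters) to see that there are $[k:\Q][F:\Q]$ eigencomponents each of $\Qb$-dimension $[K_f:k]$, so that $\dim_\Q \TT_{f,F} = [k:\Q][F:\Q][K_f:k]^2$ matches $\dim_\Q \bigl(\End_F(A_f) \otimes \Q\{G\}\bigr)$ computed via Theorem \ref{thm BfF}(b). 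The concern you raise at the end is handled exactly as you suggest, so there is no gap.
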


\begin{proof}
Since $\TT_{f,F}$ embeds in $\End_\Q(\Omega_{f,F})$, the map $\rho_f$ is injective. Let us prove that $\rho_f$ is surjective. Let $\mathcal{F}$ be the set of newforms $f^\sigma \otimes \chi$ with $\sigma : K_f \hookrightarrow \CC$ and $\chi \in \hat{G}$. For any $g \in \mathcal{F}$, let
\begin{equation}
\Omega_{f,F}[g] = (\Omega_{f,F} \otimes \Qb) \cap \Omega(\pi(g))
\end{equation}
denote the $g$-eigenspace of $\Omega_{f,F}$. By Proposition \ref{pro decomp Omegaf}, we have direct sum decompositions
\begin{align}
\label{decomp omegafF} \Omega_{f,F} \otimes \Qb & = \bigoplus_{g \in \mathcal{F}} \Omega_{f,F}[g],\\
\label{decomp omegafGg} \Omega_{f,F}[g] & = \bigoplus_{\substack{\sigma,\chi \\ f^\sigma \otimes \chi = g}} \omega_{f^\sigma} \cdot L_\chi.
\end{align}

By Lemma \ref{emb_equiv_F} and since $f$ doesn't have CM, we have $|\mathcal{F}| = |\Sigma| \cdot |\hat{G}| = [k:\Q] \cdot [F:\Q]$, and $\dim_{\Qb} \Omega_{f,F}[g] = [K_f:k]$ for every $g \in \mathcal{F}$, using notations from \S\ref{sec modular abvar}. By Lemma \ref{TK semisimple}, the map
\begin{equation}
\TT_{f,F} \otimes \Qb \to \prod_{g \in \mathcal{F}} \End_{\Qb} \Omega_{f,F}[g]
\end{equation}
is bijective. It follows that the rank of $\rho_f$ is
\begin{equation*}
\sum_{g \in \mathcal{F}} (\dim_{\Qb} \Omega_{f,F}[g])^2 = [k:\Q] \cdot [F:\Q] \cdot [K_f:k]^2
\end{equation*}
which agrees with the dimension of $\End_F(A_f) \otimes \Q\{G\}$ given by Theorem \ref{thm BfF}.
\end{proof}

\begin{remark}
It is well-known that $X_f = \End_{\Qb}(A_f) \otimes \Q$ is a crossed product algebra containing the Hecke field $K_f$ as a maximal commutative subalgebra \cite[Thm 5.1]{ribet:twists}. In fact, if $k_f$ denotes the center of $X_f$, then $X_f$ is a vector space of dimension $[K_f:k_f]$ over $K_f$, with an explicit $K_f$-basis of endomorphisms induced by the inner twists of $f$ \cite[\S 5]{ribet:twists}. It would be interesting to express these endomorphisms in terms of $\rho_f$.
\end{remark}

\section{Proofs of the main results}

Let us first recall Beilinson's theorem on modular curves \cite{beilinson:2}. Let $K$ be a compact open subgroup of $\GL_2(\A_f)$. For every $\pi \in \Pi(K)$, let $L(\pi,s)$ denote the Jacquet-Langlands $L$-function of $\pi$, with values in $\Qb \otimes \CC$, and shifted by $\frac12$ so that the functional equation corresponds to $s \leftrightarrow 2-s$. Note that $L(\pi,s)$ actually takes values in $E(\pi) \otimes \CC$, where $E(\pi) \subset \Qb$ is the number field generated by the values of the character $\theta_{\pi,K} : Z(\TT_K) \to \Qb$. If $f$ is a newform of weight $2$ with Fourier coefficients in $\Qb$ and $\pi(f)$ is the automorphic representation of $\GL_2(\A_f)$ associated to $f$, then we have $L(\pi(f),s)^\sigma = L(f^\sigma,s)$ for every embedding $\sigma : \Qb \hookrightarrow \CC$. The functional equation implies that the $L$-function $L(\pi,s)$ has a simple zero at each integer $m \leqslant 0$, with $L'(\pi,m) \in (E(\pi) \otimes \R)^\times$. Fix an integer $n \geqslant 2$. We have an isomorphism
\begin{equation}
H^2_{\mathcal{D}}(\overline{M}_K/\R,\R(n)) \cong H^1_B(\overline{M}_K(\CC),\R(n-1))^+.
\end{equation}
The Betti cohomology group decomposes with respect to the action of the Hecke algebra:
\begin{equation}
H^1_B(\overline{M}_K(\CC),\Q(n-1))^+ \otimes \Qb = \bigoplus_{\pi \in \Pi(K)} H(\pi)
\end{equation}
where $H(\pi)$ is the subspace cut out by the character $\theta_{\pi,K}$ acting on $\Omega(\pi)^K$.

Beilinson constructs a subspace $W_n \subset H^2_{\mathcal{M}}(\overline{M}_K,\Q(n))$ with the following property.

\begin{thm*}[Beilinson, \cite{beilinson:2} Thm 1.3]
Let $R=r_\BB(W_n) \subset H^1_B(\overline{M}_K(\CC),\R(n-1))^+$. We have a direct sum decomposition $R \otimes \Qb = \bigoplus_{\pi \in \Pi(K)} R(\pi)$ with $R(\pi)=L'(\pi,2-n) \cdot H(\pi)$ inside $H(\pi) \otimes \R$.
\end{thm*}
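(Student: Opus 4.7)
The plan is to follow Beilinson's original three-step strategy: construct $W_n$ explicitly from Eisenstein symbols on fiber products of the universal elliptic curve, compute its image under $r_\BB$ via Rankin--Selberg integrals, and then extract the eigenspace decomposition using multiplicity one. I would define $W_n$ as follows. Let $\mathcal{E} \to M_K$ denote the universal (generalized) elliptic curve over the open modular curve and let $\mathcal{E}^{2n-2}$ be its $(2n-2)$-fold fiber product. Starting from a space of $K$-invariant $\Q$-valued functions on the torsion subscheme of $\mathcal{E}$, the Beilinson Eisenstein symbol construction produces classes in $H^{2n-1}_{\mathcal{M}}(\mathcal{E}^{2n-2}, \Q(n))$ as $K$-theoretic symbols attached to torsion sections. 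Pushing these classes forward along $\mathcal{E}^{2n-2} \to \overline{M}_K$, after a suitable smooth compactification, yields a Hecke-equivariant subspace $W_n \subset H^2_{\mathcal{M}}(\overline{M}_K, \Q(n))$. Since $r_\BB$ is Hecke-equivariant, the image $R = r_\BB(W_n)$ is a $\TT_K$-submodule of $H^1_B(\overline{M}_K(\CC), \R(n-1))^+$, so by Lemma \ref{TK semisimple} it admits the claimed decomposition $R \otimes \Qb = \bigoplus_{\pi \in \Pi(K)} R(\pi)$ with $R(\pi) \subset H(\pi) \otimes \R$.

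The second step is the explicit calculation of the regulator. Using the Poincaré duality pairing \eqref{poincare duality}, the $\pi$-isotypic component of $r_\BB(\xi)$ is detected by its pairings with new vectors in $\Omega(\pi)^K$. A direct computation of $r_\BB$ on the Eisenstein symbol expresses this pairing, up to elementary global constants, as a Rankin--Selberg convolution integral against a real-analytic Eisenstein series of weight $2$ and spectral parameter $n-1$, integrated against a cusp form $\omega_f = 2\pi i f(\tau) d\tau$ attached to a Hecke eigenform $f$ in $\pi$. The unfolding method identifies this integral, up to gamma factors and $\Q^\times$-scalars, with a product $L(\pi, n) L(\chi, n)$ in which $\chi$ is an auxiliary Dirichlet character coming from the Eisenstein data. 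Applying the functional equation and using the fact that $L(\pi, s)$ has a simple zero at each integer $m \leqslant 0$, one converts this value into $L'(\pi, 2-n)$ times a nonzero rational factor.

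The third step combines the two inputs. For each $\pi \in \Pi(K)$, step two shows that a specific element of $W_n$ regulates to a nonzero multiple of $L'(\pi, 2-n) \cdot v_\pi$ for some explicit nonzero $v_\pi \in H(\pi) \otimes \R$. By varying the input Eisenstein data and using the Hecke action, one obtains enough classes so that their regulators span $L'(\pi, 2-n) \cdot H(\pi)$. Since $L'(\pi, 2-n) \in (\Qb \otimes \R)^\times$ for $n \geqslant 2$ (as recorded before the theorem statement), no component $R(\pi)$ collapses, and one concludes that $R(\pi) = L'(\pi, 2-n) \cdot H(\pi)$ as $\R$-subspaces of $H(\pi) \otimes \R$.

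The main obstacle is the explicit regulator computation of step two. One has to match Beilinson's description of $r_\BB$ on Eisenstein symbols, which is given in terms of Eisenstein--Kronecker series, with the classical Rankin--Selberg integral representation of $L(\pi, s)$, all while controlling rationality so that the output lies in $L'(\pi, 2-n) \cdot H(\pi)$ rather than in a strictly larger subspace of $H(\pi) \otimes \R$. The secondary difficulty is to ensure that the space of admissible Eisenstein data is large enough to saturate every Hecke eigenspace; this is handled by the explicit Fourier expansions of Eisenstein symbols and a density statement for the Eisenstein functions in the relevant space of $K$-invariant functions on the torsion, which is the technical heart of Beilinson's paper.
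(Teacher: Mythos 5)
This statement is not proved in the paper at all: it is quoted verbatim from Beilinson \cite{beilinson:2} (with the integrality refinement for $n=2$ supplied later by Schappacher--Scholl \cite{schappacher-scholl}), and the article's own contribution begins only afterwards, with the equivariant restatement (Theorem \ref{beilinson equiv}), which is deduced from the quoted theorem by isotypic decomposition under $\TT_K$ and the functoriality propositions. So your proposal is really a sketch of Beilinson's original argument, and has to be judged on those terms rather than against a proof in this article.

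As such a sketch it has the right overall shape (Eisenstein-type motivic classes, Hecke equivariance giving the decomposition $R \otimes \Qb = \bigoplus_\pi R(\pi)$, a Rankin--Selberg regulator computation, the functional equation to pass from $s=n$ to $s=2-n$, and saturation of each eigenspace), but two steps as written do not work. First, the construction of $W_n$: the Eisenstein symbol attached to the $k$-fold fibre product lives in $H^{k+1}_{\mathcal{M}}(\mathcal{E}^k,\Q(k+1))$, not in $H^{2n-1}_{\mathcal{M}}(\mathcal{E}^{2n-2},\Q(n))$, and pushing forward along $\mathcal{E}^{2n-2} \to \overline{M}_K$ (relative dimension $2n-2$) lowers the cohomological degree by $2(2n-2)$ and the twist by $2n-2$, so no such pushforward can land in $H^2_{\mathcal{M}}(\overline{M}_K,\Q(n))$. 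For $n=2$ Beilinson's elements are cup products of modular (Siegel) units in $K_2$ of the open curve; for $n \geqslant 3$ the construction is genuinely different and the control of the classes at the cusps is a substantial part of the proof, so ``push forward the Eisenstein symbol'' cannot stand in for it. Second, the equality $R(\pi)=L'(\pi,2-n)\cdot H(\pi)$ is an equality of $\Qb$-structures inside $H(\pi)\otimes\R$; your computation is only claimed ``up to $\Q^\times$-scalars and elementary constants,'' and producing regulators that are $L'(\pi,2-n)$ times Betti classes with \emph{algebraic} coefficients, in sufficient supply to span $H(\pi)$, is precisely the content of the theorem rather than a consequence of multiplicity one (Lemma \ref{TK semisimple} only gives you the eigenspace decomposition of $R$, not the identification of each piece). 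A smaller inaccuracy: the unfolded Rankin--Selberg integral against Eisenstein data produces products of the type $L(\pi,n)L(\pi,n-1)$, or Dirichlet $L$-values entering through the normalization of the Eisenstein series, not $L(\pi,n)L(\chi,n)$ in general.
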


\begin{remark}
The localization sequence in $K$-theory implies that $H^2_{\mathcal{M}/\Z}(\overline{M}_K,\Q(n)) = \linebreak H^2_{\mathcal{M}}(\overline{M}_K,\Q(n))$ for any $n \geqslant 3$. In the case $n=2$, Schappacher and Scholl \cite[Thm 1.1.2(iii)]{schappacher-scholl} later proved that $W_2 \subset H^2_{\mathcal{M}/\Z}(\overline{M}_K,\Q(2))$. 
\end{remark}

Let us now reformulate Beilinson's theorem using the equivariant formalism of \S 1. The Hecke algebra $\TT_K$ acts on the Chow motive $H^1(\overline{M}_K)(n)$, thereby defining an element of $\CHM_{\Q}(\TT_K)$. The following result is probably well-known to the experts, but doesn't seem to appear in the literature.

\begin{thm}[Equivariant version of Beilinson's theorem]\label{beilinson equiv}
Conjecture \ref{conj4} holds for the equivariant $L$-value $L({}_{\TT_K} H^1(\overline{M}_K),n)$.
\end{thm}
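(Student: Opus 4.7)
The plan is to use Beilinson's theorem componentwise with respect to the automorphic decomposition of the Hecke action given by Lemma \ref{TK semisimple}. I would take $W := W_n$ (or its $\TT_K$-saturation, which does not disturb Beilinson's theorem by the Hecke-equivariance of the construction of Eisenstein symbols) as the candidate $\TT_K$-submodule in Conjecture \ref{conj4}; by the remark following Beilinson's theorem, $W$ lies in $H^2_{\mathcal{M}/\Z}(\overline{M}_K, \Q(n))$.

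Next, I would verify the isomorphism (\ref{rBW}) in a $\TT_K$-equivariant way. Since $\overline{M}_K$ is a curve and $n \geqslant 2$, we have $\Fil^n H^1_\dR = 0$, so the exact sequence (\ref{HD 2}) collapses to $H^2_\mathcal{D}(\overline{M}_K/\R, \R(n)) \cong H^1_B(\overline{M}_K(\CC), \R(n-1))^+$. The latter decomposes after tensoring with $\Qb$ as $\bigoplus_{\pi \in \Pi(K)} H(\pi)$. By Beilinson's theorem, $r_\BB(W) \otimes \Qb = \bigoplus_\pi L'(\pi, 2-n) \cdot H(\pi)$; since each $L'(\pi, 2-n)$ is a unit in $\Qb \otimes \R$, this yields a $\TT_K \otimes \R$-linear isomorphism $r_\BB(W) \otimes \R \cong H^2_\mathcal{D}(\overline{M}_K/\R, \R(n))$, which descends from $\Qb$ to $\Q$ by Galois equivariance.

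For the $L$-value identities, the center $Z(\TT_K \otimes \CC)$ decomposes componentwise over $\Pi(K)$ by Lemma \ref{TK semisimple}, so the $\pi$-component of $L({}_{\TT_K^{\op}} H^1(\overline{M}_K)^*, s)$ is the Jacquet-Langlands $L$-function $L(\pi, s+1)$, whose leading term at $s = 1-n$ is $L'(\pi, 2-n)$. By the previous step, the isomorphism $r_\BB(W) \otimes \R \cong H^1_B(\overline{M}_K(\CC), \R(n-1))^+$ acts on each $\pi$-eigenspace by multiplication by $L'(\pi, 2-n)$, so unpacking the extended boundary map via the reduced-norm description of diagram (\ref{diag nr}) gives $\hat\delta(L^*) = \vartheta'_\infty(W)$ essentially tautologically. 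The companion identity $\hat\delta(L({}_{\TT_K} H^1(\overline{M}_K), n)) = \vartheta_\infty(W)$ then follows from the functional equation for the $L(\pi, s)$'s: on each $\pi$-eigenspace, the discrepancy between the sequences (\ref{HD 1}) and (\ref{HD 2}) matches the $\Gamma$- and $(2\pi)$-factors in the functional equation, whose rational parts lie in $Z(\TT_K)^\times$ and are therefore killed by $\hat\delta$. The main obstacle is not any single computation but the careful Hecke-equivariant bookkeeping through the definitions of the regulator, Deligne cohomology, and the extended boundary map.
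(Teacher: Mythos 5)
Your proposal is correct and follows essentially the same route as the paper: take Beilinson's Hecke-stable subspace $W_n$, decompose over $\Qb$ into $\pi$-isotypic pieces via Lemma \ref{TK semisimple}, and read off the identity in $K_0(\TT_K,\R)$ from $R(\pi)=L'(\pi,2-n)\cdot H(\pi)$, each $L'(\pi,2-n)$ being a unit in $(\Qb\otimes\R)^\times$. The only differences are cosmetic: the paper packages the reduction to the $\pi$-components through the functoriality statements (Propositions \ref{func1} and \ref{func2}) rather than by direct bookkeeping, and, like you, it handles the step relating the $L'(\pi,2-n)$-statement to the value at $s=n$ (known functional equation plus the period comparison between (\ref{HD 1}) and (\ref{HD 2})) only very briefly.
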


\begin{proof}
By Proposition \ref{func1}, it suffices to prove that for some number field $E$, Conjecture \ref{conj4} holds for $L({}_A M,n)$ where $M=H^1(\overline{M}_K) \otimes_\Q E$ and $A=\TT_K \otimes_\Q E$. Let $E \subset \Qb$ be the number field generated by the (finitely many) fields $E(\pi)$ with $\pi \in \Pi(K)$. Note that for every such representation $\pi$, the space $\Omega(\pi)^K$ as a natural $E$-structure $\Omega(\pi)^K_E$, and that the character $\theta_{\pi,K}$ takes values in $E$. We have a direct sum decomposition $M = \bigoplus_{\pi \in \Pi(K)} M(\pi)$ in $\CHM_\Q(A)$, where the structural morphism $A \to \End(M(\pi))$ factors through $A_\pi := \End_E(\Omega(\pi)^K_E)$ (see Lemma \ref{TK semisimple}). Moreover $L({}_{A_\pi} M(\pi),s)=L(\pi,s)$ in $E \otimes \CC$. By Proposition \ref{func2}, it suffices to establish Conjecture \ref{conj4} for $L({}_{A_\pi} M(\pi),n)$.

By construction, the Beilinson subspace $W_n$ is stable under $\TT_K$. For any $\pi \in \Pi(K)$, let $W_n(\pi)$ be the subspace of $W_n \otimes_\Q E$ cut out by the character $\theta_{\pi,K}$. We may identify $W_n(\pi)$ with a subspace of $H^2_{\mathcal{M}/\Z}(M(\pi),E(n))$. Since the Beilinson regulator map is $\TT_K$-equivariant, we have $r_\BB(W_n(\pi)) \otimes_E \Qb =R(\pi)$. Note that $H(\pi)$ has a natural $E$-structure $H(\pi)_E$ and that $L'(\pi,2-n) \in (E \otimes \R)^\times$. By Beilinson's theorem, we have $R(\pi)=L'(\pi,2-n) \cdot H(\pi)$ and this implies $r_\BB(W_n(\pi)) = L'(\pi,2-n) \cdot H(\pi)_E$. This means precisely that the element $\vartheta_\infty(W_n(\pi))$ of $K_0(A_\pi,\R)$ is given by $\hat{\delta}(L'(\pi,2-n))$.
\end{proof}

\begin{thm}\label{main thm 1}
Let $f$ be a newform of weight $2$ without complex multiplication, and let $F$ be a finite abelian extension of $\Q$. Let $X=\End_F(A_f) \otimes\Q$ and $G=\Gal(F/\Q)$. For every integer $n \geqslant 2$, Conjecture \ref{conj4} holds for $L({}_{X\{G\}} A_f/F,n)$.
\end{thm}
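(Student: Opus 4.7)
The plan is to derive Conjecture \ref{conj4} for $L({}_{X\{G\}} H^1(A_f/F), n)$ from Theorem \ref{beilinson equiv} on the modular curve $\overline{M}_{K_1(N)_F}$, using the idempotent $e_f$ of Proposition \ref{pro ef} to cut out the motive of $A_f$ inside that of $J_1(N)_F$, and the isomorphism $\rho_f$ of Theorem \ref{thm EndAf} to transport the Hecke action to the natural $X\{G\}$-action.

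Concretely, I would proceed as follows. First, Theorem \ref{beilinson equiv} applied with $K = K_1(N)_F$ gives Conjecture \ref{conj4} for $L({}_{\TT_{N,F}} H^1(\overline{M}_{K_1(N)_F}), n)$. Since $\overline{M}_{K_1(N)_F}$ has the same $H^1$-motive over $\Q$ as its Jacobian $J_1(N)_F$ (viewed as a $\Q$-scheme), this is equivalent to the conjecture for $L({}_{\TT_{N,F}} H^1(J_1(N)_F), n)$. Second, Proposition \ref{func3} applied to the idempotent $e_f \in \TT_{N,F}$ of Proposition \ref{pro ef} yields the conjecture for $L({}_{\TT_{f,F}} e_f H^1(J_1(N)_F), n)$, where $\TT_{f,F} = e_f \TT_{N,F} e_f$. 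Third, one identifies $e_f H^1(J_1(N)_F) \cong H^1(A_f/F)$ in $\CHM_\Q(\TT_{f,F})$, where on the right side $\TT_{f,F}$ acts through the morphism $\rho_f : \TT_{f,F} \to X\{G\}$ of \S\ref{modular endo}. Finally, Theorem \ref{thm EndAf} asserts that $\rho_f$ is bijective (this is where the absence of CM enters), and Proposition \ref{func2} applied to the surjection $\rho_f$ upgrades the $\TT_{f,F}$-equivariant conjecture into the desired $X\{G\}$-equivariant one.

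The main obstacle is the identification in the third step. The canonical projection $\pi_F : J_1(N)_F \to (A_f)_F$ gives a morphism $\pi_F^* : H^1(A_f/F) \to H^1(J_1(N)_F)$ of $\Q$-motives. By Proposition \ref{pro ef}, the image of $e_f$ on the cotangent space $\Omega_{N,F}$ is exactly $\Omega_{f,F} = \pi_F^* \Omega^1((A_f)_F)$, so $e_f \circ \pi_F^*$ and $\pi_F^*$ agree on cotangent realizations. Using the classical faithfulness of the cotangent realization on morphisms between Chow $h^1$-motives of abelian varieties (reflecting the fact that such morphisms correspond to isogeny-class morphisms of abelian varieties, which in characteristic zero are determined by their action on Lie algebras), this forces $e_f \circ \pi_F^* = \pi_F^*$ in $\CHM_\Q$, so that $\pi_F^*$ factors through a morphism $\alpha : H^1(A_f/F) \to e_f H^1(J_1(N)_F)$. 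The same faithfulness argument implies that $\alpha$ is an isomorphism, since its cotangent realization is the identity on $\Omega_{f,F}$. The $\TT_{f,F}$-equivariance of the identification is then automatic from the construction of $\rho_f$ via diagram (\ref{cd EndJ1N}), which ensures that the action of $T \in \TT_{f,F}$ on the sub-motive $e_f H^1(J_1(N)_F)$ corresponds under $\alpha$ to the action of $\rho_f(T) \in X\{G\} = \End_\Q((A_f)_F) \otimes \Q$ on $H^1(A_f/F)$.
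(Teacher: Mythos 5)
Your proposal is correct and follows essentially the same route as the paper: Theorem \ref{beilinson equiv} applied to $K=K_1(N)_F$, the identification $H^1(X_1(N)_F)\cong H^1(J_1(N)_F)$, the idempotent $e_f$ of Proposition \ref{pro ef} cutting out $H^1(A_f/F)$, Theorem \ref{thm EndAf} to identify the Hecke action with the $X\{G\}$-action, and Propositions \ref{func2}--\ref{func3} for the formal transfer. The only difference is bookkeeping: you apply Proposition \ref{func3} first and then Proposition \ref{func2} via the bijection $\rho_f$, and you spell out (via faithfulness of the cotangent realization on $h^1$-motives of abelian varieties) the isomorphism $e_f H^1(J_1(N)_F)\cong H^1(A_f/F)$ that the paper asserts directly from Theorem \ref{thm EndAf}.
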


\begin{proof}
Assume $f \in S_2(\Gamma_1(N))$ is a newform of level $N$. We use Theorem \ref{beilinson equiv} with the subgroup $K=K_1(N)_F$ defined in \ref{hecke base change}, so that $\overline{M}_K=X_1(N)_F$. Let $J_1(N)_F$ be the Jacobian of $X_1(N)_F$. We have an isomorphism $H^1(X_1(N)_F) \cong H^1(J_1(N)_F)$ in $\CHM_\Q(\TT_{N,F})$ (see for instance \cite[Prop 4.5]{scholl:motives} applied to $X=X_1(N)_F$ and $X'=J_1(N)_F$). Let $e_f \in \TT_{N,F}$ be the idempotent from Proposition \ref{pro ef}, and let $\TT_{f,F} =e_f \TT_{N,F} e_f$. By Theorem \ref{thm EndAf}, we have an isomorphism of Chow motives $e_f(H^1(J_1(N)_F)) = H^1(A_f/F)$ in $\CHM_\Q(X^{\mathrm{op}}\{G\})$. The result now follows from Proposition \ref{func3}.
\end{proof}

\begin{thm}\label{main thm 2}
Let $f$ be a newform of weight $2$ without complex multiplication, and let $F,F'$ be finite abelian extensions of $\Q$ such that $F \subset F'$. Let $X=\End_{F'}(B_{f,F}) \otimes \Q$ and $G=\Gal(F'/F)$. For every integer $n \geqslant 2$, Conjecture \ref{conj4} holds for $L({}_{X\{G\}} B_{f,F}/F',n)$.
\end{thm}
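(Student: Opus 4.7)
The plan is to deduce Theorem \ref{main thm 2} from Theorem \ref{main thm 1} applied to $f$ with the extension $F'/\Q$, by combining the $F$-isogeny $A_f \sim_F B_{f,F}^n$ of Theorem \ref{thm BfF}$(c)$ with the functoriality Propositions \ref{func2} and \ref{func3}, and concluding with a base-change identification from base field $\Q$ to base field $F$.

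First, Theorem \ref{main thm 1} applied to $f$ and $F'/\Q$ gives Conjecture \ref{conj4} for $L({}_{Y\{H\}} H^1(A_f/F'),n)$ (the motive viewed over $\Q$), where $Y := \End_{F'}(A_f) \otimes \Q$ and $H := \Gal(F'/\Q)$. Setting $G := \Gal(F'/F) \subset H$, the inclusion $Y\{G\} \hookrightarrow Y\{H\}$ is a morphism of semisimple $\Q$-algebras, so Proposition \ref{func2} yields Conjecture \ref{conj4} for $L({}_{Y\{G\}} H^1(A_f/F'),n)$ over $\Q$. The $F$-isogeny induces a $\Q$-algebra isomorphism $Y \cong M_n(X)$ with $X := \End_{F'}(B_{f,F}) \otimes \Q$, hence $Y\{G\} \cong M_n(X\{G\})$, as well as an isomorphism of $\Q$-motives $H^1((A_f)_{F'}) \cong H^1((B_{f,F})_{F'})^n$ in $\CHM_\Q(M_n(X\{G\}))$ (the $F$-isogeny is a $\Q$-morphism that is $\End_F$-equivariant, hence commutes with the $Y\{G\}$-action). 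Applying Proposition \ref{func3} with a rank-one idempotent $e \in M_n(X\{G\})$ — so that $eM_n(X\{G\})e = X\{G\}$ and $e(H^1((A_f)_{F'})) \cong H^1((B_{f,F})_{F'})$ — one obtains Conjecture \ref{conj4} for the equivariant $L$-value of $H^1((B_{f,F})_{F'})$ with $X\{G\}$-coefficients, viewed over $\Q$.

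It remains to identify this with the conjecture over $F$, which is the form asserted in Theorem \ref{main thm 2}. By Lemma \ref{EndK ML}, $X\{G\} = \End_F((B_{f,F})_{F'})$, so $(B_{f,F})_{F'}$ with its $X\{G\}$-action is an object of both $\CHM_\Q(X\{G\})$ and $\CHM_F(X\{G\})$. The equivariant $L$-functions over $\Q$ and over $F$ coincide via Artin--Brauer induction applied to the isomorphism $H^i_\et((B_{f,F})_{F'} \times_\Q \Qb) \cong \Ind_{\Gal(\Qb/F)}^{\Gal(\Qb/\Q)} H^i_\et((B_{f,F})_{F'} \times_F \overline{F})$ of $\Gal(\Qb/\Q)$-representations, which intertwines the $X\{G\}$-action; the Betti, de Rham, and Deligne realizations together with the Beilinson regulator map are invariant under the étale base change $F/\Q$ (the de Rham complex is unchanged since $\Omega^1_{F/\Q}=0$, and the total Betti cohomology is indexed by complex embeddings of $F'$ either way); and the inclusion $H^{i+1}_{\mathcal{M}/\Z} \subset H^{i+1}_{\mathcal{M}/\mathcal{O}_F}$ transports any witness subspace $W$ from $\Q$ to $F$. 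This base-change identification is the main technical obstacle, since \S 1.5 formalizes functoriality only at the level of the coefficient algebra; an alternative would be to prove an $F$-version of Theorem \ref{main thm 1} directly, imitating its proof with the modular curves $X_1(N)_{F'}$ regarded as $F$-varieties and using an $F$-analog of Theorem \ref{thm EndAf}.
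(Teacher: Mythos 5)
Your argument is essentially the paper's own proof: apply Theorem \ref{main thm 1} over $F'$, use the $F$-isogeny $A_f \sim_F B_{f,F}^m$ to embed $M_m(X\{G\}) \cong M_m(X)\{G\}$ into $\End_{F'}(A_f)\otimes\Q\{\Gal(F'/\Q)\}$ and restrict via Proposition \ref{func2}, then project onto one factor with an idempotent via Proposition \ref{func3} — exactly the paper's chain of reductions. Your closing paragraph about re-identifying the conjecture over base field $\Q$ versus base field $F$ addresses a point the paper simply leaves implicit (it works with the motive and its $X\{G\}$-action without changing base), so your steps before that already reproduce the paper's proof and the extra discussion, while reasonable, is not a divergence in method.
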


\begin{proof}
By definition of $B_{f,F}$, we have an isogeny $A_f \sim_F B_{f,F}^m$ for some $m \geqslant 1$, and thus an isomorphism of Chow motives $H^1(A_f/F') \cong H^1(B_{f,F}/F')^{\oplus m}$. Let $R=M_m(X\{G\}) \cong M_m(X) \{G\}$. Put $X' = \End_{F'}(A_f) \otimes \Q$ and $G'=\Gal(F'/\Q)$, so that we have a canonical embedding $R \hookrightarrow X'\{G'\}$. By Theorem \ref{main thm 1} and Proposition \ref{func2}, Conjecture \ref{conj4} holds for $L({}_{R^{\mathrm{op}}} H^1(B_{f,F}/F')^{\oplus m},n)$. We conclude by projecting onto $H^1(B_{f,F}/F')$ using Proposition \ref{func3}.
\end{proof}

Putting together Theorems \ref{strongly modular} and \ref{main thm 2}, we deduce the following result.

\begin{cor}\label{cor 1}
Let $A$ be an abelian variety over a number field $K$ such that $L(A/K,s)$ is a product of $L$-functions of newforms of weight $2$ without complex multiplication. Let $X=\End_K(A) \otimes \Q$. Then for every integer $n \geqslant 2$, Conjecture \ref{conj4} holds for $L({}_X A,n)$.
\end{cor}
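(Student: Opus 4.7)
The plan is to combine Theorem \ref{strongly modular}, which cuts $A$ into pieces of the form $B_{f,K}$ up to isogeny, with Theorem \ref{main thm 2}, which handles each piece individually, using the functoriality results of Propositions \ref{func2} and \ref{func3} to manage the equivariant structure.

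First, by Theorem \ref{strongly modular}, the extension $K/\Q$ is abelian and $A$ is $K$-isogenous to a product $B_{f_1,K} \times \cdots \times B_{f_r,K}$ for newforms $f_i$ of weight $2$ without complex multiplication. A $K$-isogeny induces an isomorphism of Chow motives $H^1(A/K) \cong H^1\bigl(\prod_i B_{f_i,K}/K\bigr)$ in $\CHM_K(\Q)$, an isomorphism of endomorphism algebras, and preserves equivariant $L$-functions, so we may assume $A = B_{f_1,K} \times \cdots \times B_{f_r,K}$. Grouping the factors by $K$-isogeny class, write $A \sim_K C_1^{m_1} \times \cdots \times C_s^{m_s}$ where the $C_j$ are pairwise non-$K$-isogenous and each is of the form $B_{g_j,K}$ for some newform $g_j$ of weight $2$ without CM. Setting $D_j = \End_K(C_j) \otimes \Q$, we then have $X \cong \prod_{j=1}^s M_{m_j}(D_j)$, acting block-diagonally on $H^1(A/K) \cong \bigoplus_j H^1(C_j/K)^{\oplus m_j}$.

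Because the equivariant Beilinson conjecture is compatible with direct sums of Chow motives, and because here $X$ is a product of algebras $X_j := M_{m_j}(D_j)$ each acting on its own summand, it suffices to establish Conjecture \ref{conj4} for $L({}_{X_j} H^1(C_j/K)^{\oplus m_j}, n)$ for each $j$. Theorem \ref{main thm 2}, applied with $F = F' = K$ so that the Galois group is trivial, gives Conjecture \ref{conj4} for $L({}_{D_j} H^1(C_j/K), n)$. To promote this from $D_j$ acting on $H^1(C_j/K)$ to $M_{m_j}(D_j)$ acting on $H^1(C_j/K)^{\oplus m_j}$, one invokes Morita equivalence: the element $e = E_{11} \in M_{m_j}(D_j)$ is an idempotent with $e M_{m_j}(D_j) e = D_j$ and $e\bigl(H^1(C_j/K)^{\oplus m_j}\bigr) \cong H^1(C_j/K)$, and the canonical Morita equivalence identifies all the data entering Conjecture \ref{conj4} (regulator, Deligne cohomology, equivariant $L$-function, extended boundary map) on the two sides. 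Concretely, one can check this by combining Proposition \ref{func3} with the fact that the induced map $K_0(M_{m_j}(D_j),\R) \to K_0(D_j,\R)$ is an isomorphism, so the validity of the conjecture propagates from $D_j$ back to $M_{m_j}(D_j)$.

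The corollary is essentially a packaging exercise; the only subtle point is the Morita step, but this is well-known to preserve all the $K$-theoretic and cohomological invariants involved, and its compatibility with the equivariant $L$-function is built into the very definition recalled in \cite[\S 4.1]{burns-flach}.
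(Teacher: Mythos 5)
Your proof is correct and is essentially the deduction the paper intends (the paper only says "putting together" Theorems \ref{strongly modular} and \ref{main thm 2}): Theorem \ref{strongly modular} shows $K/\Q$ is abelian and reduces $A$ up to $K$-isogeny to a product of varieties $B_{f_i,K}$, and Theorem \ref{main thm 2} with $F=F'=K$ (trivial Galois group) gives Conjecture \ref{conj4} for each simple factor with its endomorphism algebra $D_j$. The one ingredient you use that the paper's stated functoriality results do not literally supply is the upward Morita step from $D_j$ to $M_{m_j}(D_j)$ needed to handle multiplicities (Proposition \ref{func3} only goes downward from $A$ to $eAe$); your justification is sound, since for the full idempotent $e=E_{11}$ the functor $V \mapsto eV$ is an equivalence of module categories, so $e^*$ is bijective on $K_0(-,\R)$, identifies $Z(M_{m_j}(D_j))$ with $Z(D_j)$ compatibly with reduced norms, equivariant $L$-values and extended boundary maps, and hence the comparison in the proof of Proposition \ref{func3} reverses.
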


In the particular case of $\Q$-curves, this gives the following result.

\begin{cor}\label{cor 2}
Let $E$ be a $\Q$-curve without complex multiplication over a number field $K$ such that $L(E/K,s)$ is a product of $L$-functions of newforms of weight $2$. Then for every integer $n \geqslant 2$, Conjecture \ref{conj4} holds for $L(E/K,n)$.
\end{cor}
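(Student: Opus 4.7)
The plan is to deduce Corollary \ref{cor 2} as a specialization of Corollary \ref{cor 1}. Since Corollary \ref{cor 1} requires the base field to be Galois over $\Q$ while Corollary \ref{cor 2} does not explicitly impose this condition, the only nontrivial step is to verify that the hypotheses of Corollary \ref{cor 2} force $K/\Q$ to be Galois.

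First, I would invoke the remark following Corollary \ref{strongly modular 2}, which cites Quer's Theorem 3.1 in \cite{quer} to remove the Galois hypothesis from that corollary. Applied to our situation, this shows that the assumption that $E$ is a non-CM elliptic curve over $K$ with $L(E/K,s)$ a product of weight-$2$ newform $L$-functions already implies that $K/\Q$ is abelian; in particular, $K/\Q$ is Galois. (The $\Q$-curve hypothesis in Corollary \ref{cor 2} is, in fact, a further consequence of the strengthened Corollary \ref{strongly modular 2}, though I will retain it for clarity of statement.)

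Next, since $E$ has no complex multiplication, $\End_{\overline{K}}(E)=\Z$, so $X:=\End_K(E)\otimes\Q=\Q$. Consequently the equivariant $L$-function $L({}_X H^1(E/K),s)$ coincides with the classical Hasse-Weil $L$-function $L(E/K,s)$. Applying Corollary \ref{cor 1} to the abelian variety $A=E$ over the Galois number field $K$ then yields Conjecture \ref{conj4} for $L(E/K,n)$ for every integer $n\geqslant 2$, completing the proof. No step presents a real obstacle: the argument is a routine combination of Corollary \ref{cor 1} with the observation, due to Quer, that the field of definition of a non-CM strongly modular $\Q$-curve is necessarily abelian.
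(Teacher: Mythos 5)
Your proposal is correct and takes essentially the paper's (implicit) route: the paper presents Corollary \ref{cor 2} simply as the elliptic-curve case of Corollary \ref{cor 1}, and you correctly make explicit the two steps left unsaid, namely that Quer's theorem (the remark after Corollary \ref{strongly modular 2}) forces $K/\Q$ to be abelian, hence Galois, and that $\End_K(E)\otimes\Q=\Q$ for non-CM $E$, so the equivariant $L$-function is the classical $L(E/K,s)$. The only small point to add is that Corollary \ref{cor 1} also requires the newforms to be without complex multiplication; this too follows from the strengthened Corollary \ref{strongly modular 2}, since the newform $f$ with $E\sim_K B_{f,K}$ has no CM and $L(E/K,s)$ is then a product of the non-CM twisted $L$-functions $L(f^\sigma\otimes\chi,s)$.
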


This result has the following consequence on Zagier's conjecture on $L(E,2)$ (see \cite{brunault:LEF} for how to derive Corollary \ref{cor 3} from Corollary \ref{cor 2}).

\begin{cor}\label{cor 3}
Let $E$ be a $\Q$-curve without complex multiplication over a number field $K$ such that $L(E/K,s)$ is a product of $L$-functions of newforms of weight $2$. Then the weak form of Zagier's conjecture on $L(E/K,2)$ holds.
\end{cor}

We also get the following consequence on $L(E,3)$. Deninger predicted that for an elliptic curve $E/\Q$, the $L$-value $L(E,3)$ can be expressed in terms of certain double Eisenstein-Kronecker series evaluated at algebraic points of $E$ \cite{deninger:higher}. Goncharov proved this conjecture in \cite{goncharov:LE3} by explicitly computing the regulator map on $K_4(E)$ and applying Beilinson's theorem. Deninger's conjecture can be generalized to an elliptic curve over an arbitrary number field. Using Goncharov's techniques, we get the following result.

\begin{cor}\label{cor 4}
Let $E$ be a $\Q$-curve without complex multiplication over a number field $K$ such that $L(E/K,s)$ is a product of $L$-functions of newforms of weight $2$. Then the weak form of Deninger's conjecture on $L(E/K,3)$ holds.
\end{cor}

\bibliographystyle{plain}
\bibliography{../references}

\end{document}